\newcommand{\RR}{\mathbb{R}}
\newcommand{\NN}{\mathbb{N}}
\newtheorem{theorem}{Theorem}
\newtheorem{lemma}{Lemma}
\newtheorem{defi}{Definition}
\newcounter{example}
\newenvironment{example}[1][]{\refstepcounter{example}\par\medskip
   \noindent \textbf{Example~\theexample. #1} \rmfamily}{\medskip}
\newtheorem{remark}{Remark}
\newcounter{algorithm}
\newenvironment{algorithm}[1][]{\refstepcounter{algorithm}\par\medskip
   \noindent \textbf{Algorithm~\thealgorithm. #1} \rmfamily}{\medskip}
\title{Identifiability and Observability Analysis for Epidemiological Models: Insights on the SIRS Model}
\author[1]{Alicja B. Kubik\thanks{akubik@ucm.es}}
\author[1]{Benjamin Ivorra\thanks{ivorra@mat.ucm.es}}
\author[2]{Alain Rapaport\thanks{alain.rapaport@inrae.fr}}
\author[1]{Ángel M. Ramos\thanks{angel@mat.ucm.es}}
\affil[1]{MOMAT, Instituto de Matem\'atica Interdisciplinar (IMI), Univ. Complutense de Madrid, 28040 Madrid, Spain}
\affil[2]{MISTEA, Univ. Montpellier, INRAE, Institut Agro, 34060 Montpellier, France}
\date{}
\begin{document}

\maketitle

% Abstract
\begin{abstract}
The problems of observability and identifiability have been of great interest as previous steps to estimating parameters and initial conditions of dynamical systems to which some known data (observations) are associated. While most works focus on linear and polynomial/rational systems of ODEs, general nonlinear systems have received far less attention and, to the best of our knowledge, no general constructive methodology has been proposed to assess and guarantee parameter and state recoverability in this context. We consider a class of systems of parameterized nonlinear ODEs and some observations, and study if a system of this class is observable, identifiable or jointly observable-identifiable; our goal is to identify its parameters and/or reconstruct the initial condition from the data. To achieve this, we introduce a family of efficient and fully constructive procedures that allow recoverability of the unknowns with low computational cost and address the aforementioned gap. Each procedure is tailored to different observational scenarios and based on the resolution of linear systems. As a case study, we apply these procedures to several epidemic models, with a detailed focus on the SIRS model, demonstrating its joined observability-identifiability when only a portion of the infected individuals is measured, a scenario that has not been studied before. In contrast, for the same observations, the SIR model is observable and identifiable, but not jointly observable-identifiable. This distinction allows us to introduce a novel approach to discriminating between different epidemiological models (SIR vs. SIRS) from short-time data. For these two models, we illustrate the theoretical results through some numerical experiments, validating the approach and highlighting its practical applicability to real-world scenarios.
\end{abstract}

% Keywords
{\small \noindent\textit{Key words:}
Nonlinear epidemiological models; Parameter identifiability; State observability; Joined observability-identifiability; SIRS model analysis; Epidemiological model discrimination; Epidemic data reconstruction.}

% Main text
\section{Introduction}\label{intro}

Mathematical modeling has been widely used for studying epidemics for a long time. Some of the most popular models in epidemics generated by infectious diseases are compartmental models, whose theory was set up in the early 1900s (see \cite{KMK}), and have been regularly used since then to estimate the dynamics of different diseases (for example, influenza \cite{InfluenzaBrauer, InfluenzaChen}, tuberculosis \cite{TBCastillo, TBJiang}, Ebola \cite{EbolaGumel,EbolaSeck}, or vector-transmitted diseases such as malaria \cite{malaria}, dengue fever or the Zika virus \cite{DengueBrauer,DengueFeng}), with a significant surge in applications in recent years due to the COVID-19 pandemic (see, for example, \cite{CovidArino,Bertozzi, China,Quarteroni,VyV}). These models are usually deterministic systems of nonlinear ODEs, which include the class of systems that we are going to address from now on.

Given a disease and some information on the different biological processes that are involved, we need to set up a model that captures the most important features of its dynamics, being careful not to complexify it up to a point that may result intractable from a mathematical perspective. To these dynamical models there are associated data, available in the literature (e.g., parameter values) or collected over time (e.g., how many people are hospitalized or vaccinated at certain dates). This information is partial most of the time, in the sense that they are related to some of the parameters and state variables of the model, but rarely to all of them. For instance, the sizes of susceptible or asymptomatic compartments are often hard or even impossible to measure (they are hidden variables), and the transmission rate of a new virus is often not known in advance.

Once we have settled a model and know some information about the parameters and compartments, a question arises: Can we uniquely reconstruct all the unknowns of the model from partial observations? The ability to accurately reconstruct key aspects of disease dynamics from observed data is fundamental to understand epidemic trajectories and design effective control strategies. Thus, determining whether the parameters can be uniquely identified or the states can be precisely reconstructed from available data is essential. We thus face an inverse problem. Such identification and state reconstruction problems are often met in automatic control for linear and nonlinear dynamics. Although many successful applications have been recorded in aeronautics, automotive, electronics, or chemistry, among other domains, very few has been investigated for epidemiological models. Moreover, despite substantial research addressing these questions, significant challenges remain, particularly in nonlinear models, where exact reconstruction of parameters and states is often elusive and requires sophisticated analytical approaches. 

In this context, the determination of a feasible set of parameters and an initial condition that align with observed data falls into two key problems: The \textit{parameter identification problem} and the \textit{state observation problem} in automatic control literature. These have been addressed in many different ways (see, for example, \cite{IdentCapistran, IdentChowell, IdentGoodwin, ObserverHuong, ObsCOVID, EstimationMedvedeva, IdentPronzato}). However, before attempting to determine these unknowns, it is crucial to assess the extent of recoverable information from the available epidemic data. For example, can we determine the disease contact rate if we know the data of hospitalized people? What about the loss of immunity rate? How many people were infected when the data started to be reported? Of course, these are not issues related only to epidemiological models. In general, given a phenomenon we want to model (e.g., physical, biological, or mechanical), we will have some parameters, a \textit{state vector} (in the epidemiological case, these states are the size of the population in each compartment, or a portion of this size), and some observed data of this phenomenon (also called \textit{measurements}, \textit{outputs} of the model, or \textit{signals} \cite{TermKalman}). The theories of \textit{identifiability} and \textit{observability} provide the foundation for addressing these questions. Let us briefly introduce these concepts.

Identifiability theory is the one in charge of deciding, given an initial condition and some observed data, if we can determine univocally all the unknown parameters that govern our system. If it is not possible, we can possibly recover them \textit{partially} (some of them or some combination of them).

Observability theory is the one in charge of deciding, given the parameters and some observed data, if we can recover the initial condition; in other words, if there exists a unique initial condition such that the solution starting from this initial condition matches our observations, and hence we can distinguish different states from partial measurements of the state vector.

If one is interested in both identifying and observing the system, i.e., if it is \textit{jointly observable-identifiable} (see \cite{Cunniffeetal}) it is common to extend the system by considering the parameters as part of the state vector (with a null dynamics), and then considering the observation problem of this extended system. The most commonly used technique to decide whether a system is observable or not is the Hermann-Krener condition, or Observability Rank Condition, that consists in studying the separability of a set composed by the observations and its Lie derivatives with respect to the vector field of the system (see \cite{ORC}); this computation may be sometimes simplified by exploiting symmetries or groups of invariance of the system (in particular, for mechanical and/or robotics systems, see e.g. \cite{Martinelli}). If we conclude our system is observable, then we can make use of several techniques that may help us estimate practically the unknowns; in particular, to perform this task, we can try to construct a \textit{state observer}, as for example the Luenberger observer \cite{Luenberger}, high-gains observers \cite{HighGains}, or the Kalman filter \cite{Kalman}. These techniques aim to \textit{estimate} the state vector at a certain speed with a certain accuracy, minimizing the estimation error at a given time, such that it is usually asymptotically null, but do not guarantee exact reconstruction. Sometimes, however, one can treat the system algebraically and try to reconstruct exactly the unknowns in terms of, for example, the derivatives of the data whenever they exist and are known (or can be computed) perfectly (see \cite[Chapter 3]{Cunniffeetal}).

The theories of identifiability and observability extend beyond the epidemiological context and are relevant to any phenomena modeled through ODE systems with measurable outputs or functions of the states (see \cite{bioreactorsBastin, vehicleBicchi, orbitalGeller, JCD2024, Jain2019, ObsAutomatica}). Indeed, there is few literature about studying the joined observability-identifiability of epidemiological models (see \cite{Cunniffeetal}, which is a survey that can be used as a textbook about this topic, and \cite{IdentObsFang}, \cite{RapaportQuarantine}, and \cite{IdentObsVillaverde}).

Several works have emphasized the importance of initial conditions for identifying the parameters of a general system, and not every initial condition is suitable, i.e., some initial conditions can produce the same output when considering different parameters (see, e.g., \cite{VidalBlanchard2001}, \cite{Saccomani2003}). While there are fields where one can perform different experiments with chosen initial conditions in order to identify the parameters (see, e.g., \cite{VidalBlanchard2001, Jeronimo2019}), in disciplines such as Epidemiology this is not possible, and hence we need to study which initial conditions are not suitable, and if we can avoid them. In this paper, we build on the theory developed in \cite{Kubik}, and extend the results by considering the existence of non-suitable initial conditions which are dependent on the parameters. 

On the other hand, most of the research carried out in observability and identifiability of nonlinear dynamics is centered on polynomial and rational systems of equations (see, e.g., \cite{Fliess1981, Gevers2016, Ljung1994, Ljung1990, Saccomani2003}), and the study of nonlinear, non-rational systems is scarce (see the survey \cite{VidalAnstett2020}). In rational systems, the goal is typically finding the \textit{input-output} (IO) \textit{equations} in order to determine if the parameters are identifiable. These are structural equations of the system under study, which relate the parameters, the outputs and their derivatives, and their inputs (if any). However, {we} consider that, in these works, there lacks a direct, unified argument to conclude that these parameters can be effectively recovered from these equations. In \cite{Kubik}, general nonlinearities {are considered}, and {it advances the state of the art by minimizing reliance on high-order derivatives, improving robustness with respect to noisy and sparse datasets, and offering novel techniques to address joined observability-identifiability in nonlinear systems of ODEs.} Moreover, this lacking argument {is also covered} by considering an assumption about linear independence of functions of the observations and their derivatives. This assumption of linear independence was already tackled in \cite{VidalBlanchard2001} to prove identifiability alone, however, it is framed in the case of rational systems and it assumes knowledge of initial conditions and having data at the initial time; these last assumptions are not feasible in some fields such as Epidemiology. It is as well mentioned in \cite{Ovchinnikov2022} for linear ODE systems, which do not cover epidemiological models either.

This paper is structured as follows. In Section \ref{motivation}, we motivate through the SIRS model the application of this framework to epidemiological models. Then, in Section \ref{gen_model}, we revisit and extend the theoretical part presented in \cite{Kubik}, and establish different constructive algorithms to recover the parameters and/or initial condition. We also extend the theory to incorporate higher-order derivatives, and highlight that this framework can be extended to scenarios with piecewise constant parameters, provided that the time instants of parameter changes are known or the time intervals with constant values are large enough.

In Section \ref{obsnident}, we validate this approach by applying it to the SIRS model, proving joined observability-identifiability under partial observations of the infected compartment, an example that we have not seen considered in the literature. We will observe that in this case it is crucial considering initial conditions in parameter-dependent sets. Furthermore, although this is a rational model which can be studied through classical techniques, using our methodology allows for early-stage model discrimination between SIR and SIRS structures, enabling the detection of immunity loss directly from observational data. This distinction addresses a key gap in the literature, to the best of our knowledge, where theoretical studies often overlook model selection based on limited data. After this, we present in Section \ref{examples} other epidemiological examples to highlight the interest of the proposed approach. Finally, we perform in Section \ref{numerical} different numerical experiments using the SIRS and the SIR models in two different scenarios that illustrate our approach.

\section{Motivating framework: Insights from the SIRS model} \label{motivation}

Compartmental models, such as the SIRS model, strongly depend on real data to determine their parameters and the current state of the disease. These data are typically scarce and noisy, making it crucial to develop methods to estimate the unknown parameters and/or initial condition from partial observations. For this, we are going to follow two different approaches, depending on the type of available data: identifiability and observability, both formally defined in Section \ref{gen_model}.

Let us consider the following classical SIRS model together with an output, given by an unknown fraction $k\in(0,1]$ of the infected individuals at time $t\geq 0$, represented by $y$:
\begin{equation}\label{motiSIRSkI}\left\{\begin{array}{ccl}
    \dot{S}&=&-\beta SI+\mu R,\\
    \dot{I}&=&\beta SI-\gamma I, \\
    \dot{R}&=&\gamma I-\mu R, \\
    y&=& kI,
\end{array} \quad \forall\, t\geq 0.\right.\end{equation}

This model is particularly relevant for understanding the dynamics of diseases such as influenza of some coronaviruses. It is also of interest in different realistic situations, such as performing random tests among the population, or considering that a fraction $1-k$ of the infectious people are asymptomatic and hence we do not measure those cases, where the parameter $k$ is typically unknown. For instance, during the COVID-19 pandemic, one of the main challenges when studying the prevalence of the disease was estimating the asymptomatic cases, whereas governments also conducted randomized testing to estimate this prevalence (see, for example, \cite{Italy}).

Given this model, we are interested in knowing whether it is identifiable, observable or jointly observable-identifiable. To address this, in Section \ref{gen_model}, we revisit and extend the general framework of \cite{Kubik}. The presented SIRS model with output $kI$ fits within the class of systems presented in Section \ref{gen_model} and serves as a motivating example. Indeed, in Section \ref{obsnident} we make use of this approach to prove its joined observability-identifiability when $\mu>0$. Moreover, we compare these results to those obtained for the (\textit{a priori} simpler) SIR model, which can be considered as a limiting case of the SIRS model with $\mu=0$, considering the same observational data (i.e., a fraction of infected individuals). Our analysis reveals that the SIR model is both observable and identifiable, but lacks joined observability-identifiability, highlighting a key distinction between these two models that allows us to distinguish them in an early stage. We will illustrate this difference through the numerical tests presented in Section \ref{numerical}.

\section{A general framework} \label{gen_model}

In this section, we extend the general framework established in \cite{Kubik} for analyzing observability, identifiability and joined observability-identifiability of dynamical systems, particularly focusing on systems modeled by autonomous differential equations with unknown parameters. We provide conditions under which the parameters and/or initial condition of such systems can be uniquely determined from observations of the system’s output. To this end, we first introduce the mathematical formulation of the model and provide formal definitions of the key concepts: Identifiability, observability, and joined observability-identifiability. Unlike in \cite{Kubik}, when discussing identifiability, we will consider the possibility of parameter-dependent initial conditions that impede identifiability; this is a typical situation in epidemiological models. Additionally, we revisit some concepts on Lie derivatives that are instrumental in the theoretical analysis. Building on this foundation, we extend the key results from \cite{Kubik}, taking into account these parameter-dependent initial conditions, that enable to establish different constructive methodologies to recover the initial condition and/or the parameters, whenever possible. Moreover, we present a third approach using higher-order derivatives to recover the unknowns in cases where the system is proven to be jointly observable-identifiable.

\subsection{Mathematical formulation and definitions}

Consider a phenomenon that can be modeled by a system of first order autonomous differential equations, which depends on some unknown parameter vector $\theta$. Together with an initial condition $\xi$, we assume the model provides some \textit{output} $y_{(\xi,\theta)}$, described by a suitable function $h$. The mathematical formulation of the model and its output is given by
\begin{equation}\label{general_sys}
    \left\{\begin{array}{l}
    \dfrac{\mathrm{d} x}{\mathrm{d} t}(t;\xi,\theta)=f(x(t;\xi,\theta),\theta), \quad x(0;\xi,\theta)=\xi, \\[1em] y_{(\xi,\theta)}(t)=h(x(t;\xi,\theta),\theta),
    \end{array}\right.
\end{equation}
where $f(\cdot , \cdot): \Omega\times \Theta\rightarrow \mathbb{R}^n$ is a known function of $(x,\theta)$ which is locally Lipschitz-continuous w.r.t. $x\in\Omega\subset\mathbb{R}^n$ (to guarantee uniqueness of solutions) and continuous w.r.t. $\theta\in\Theta\subset\mathbb{R}^b$; $\theta$ are the constant parameters of the system; $\Omega\subset\mathbb{R}^n$ is a positively invariant set with respect to the system of ODEs of System \eqref{general_sys}, for any $\theta\in\Theta$ (i.e., solutions starting in $\Omega$ remain in $\Omega$ during all its definition time); $x(\cdot ; \xi,\theta):\mathcal{I}\rightarrow\Omega\subset\mathbb{R}^n$ denotes the unique solution of the system of ODEs of System \eqref{general_sys} with initial condition $\xi\in \Omega$ and we assume it is globally defined, i.e., $\mathcal{I}=[0,+\infty)$; and the output $y_{(\xi,\theta)}(t)$, $t\in\mathcal{S}\subset\mathcal{I}$, is described by some known function $h( \cdot , \cdot ):\Omega\times\Theta\rightarrow\mathbb{R}^m$.

We aim to answer one question: given the output $y_{(x_0,\theta_0)}(\cdot)$ for certain $(x_0,\theta_0)\in\Omega\times\Theta$, can we uniquely determine $x_0$ (given $\theta_0$), $\theta_0$ (given $x_0$) or both of them?

We present now formal definitions of the identifiability and observability concepts mentioned in the Introduction. As previously mentioned, some parameter-dependent initial conditions may not allow to distinguish different parameters. Hence, we will consider, for each $\theta\in\Theta$, a subset $\Omega_{\theta}\subset\Omega$ positively invariant w.r.t. the ODE system in System \eqref{general_sys}{, and the set $\Gamma_{\Theta}=\bigcup_{\theta\in\Theta}\Omega_{\theta}\times\{\theta\}$. Notice that, if $\Omega_{\theta}=\Omega$, for each $\theta\in\Theta$, then $\Gamma_{\Theta}=\Omega\times\Theta$.}

\begin{defi}[Identifiability in a time set]\label{identifiability_set}
    System \eqref{general_sys} is identifiable on $\Theta$ in $\mathcal{S}\subset\mathcal{I}$ {with initial conditions consistent with} the family $\{\Omega_{\theta}\}_{\theta\in\Theta}$ if, for any $\theta_0\in\Theta$, for any $\xi\in\Omega_{\theta_0}$, a different $\theta_0'\in\Theta$ produces a different output at some time $t\in\mathcal{S}$, i.e., $$h(x(t;\xi,\theta_0), \theta_0)\neq h(x(t;\xi,\theta_0'), \theta_0').$$
    Equivalently, if $h(x(t;\xi,\theta_0), \theta_0)=h(x(t;\xi,\theta_0'), \theta_0')$, for all $t\in\mathcal{S}$ and any $\xi\in\Omega_{\theta_0}$, then $\theta_0=\theta_0'$. {Moreover, if $\Omega_{\theta}=\Omega$, for each $\theta\in\Theta$, we say that System \eqref{general_sys} is identifiable on $\Theta$ in $\mathcal{S}\subset\mathcal{I}$ with initial conditions in $\Omega$.}
\end{defi}

A typical initial condition which may not allow identifiability are the equilibrium points that depend on the parameters, in the case that different parameter vectors generate the same constant observation. This phenomenon will be illustrated in Section \ref{obsnident}. 

\begin{defi}[Observability in a time set]\label{observability_set}
    System \eqref{general_sys} is observable on $\Omega$ in $\mathcal{S}\subset\mathcal{I}$ {with parameters in} $\Theta$ whether, for any $\theta\in\Theta$, any two different $x_0,x_0'\in\Omega$ produce a different output at some time $t\in\mathcal{S}$, i.e.,
    \begin{equation*}
    h(x(t;x_0,\theta), \theta)\neq h(x(t;x_0',\theta), \theta).\end{equation*}
    Equivalently, if $h(x(t;x_0,\theta), \theta)=h(x(t;x_0',\theta), \theta)$, for all $t\in\mathcal{S}$ and any $\theta\in\Theta$, then $x_0=x_0'$.
\end{defi}

\begin{defi}[Identifiability]\label{identifiability}
    System \eqref{general_sys} is identifiable on $\Theta$ {with initial conditions consistent with the family} $\{\Omega_{\theta}\}_{\theta\in\Theta}$ if Definition \ref{identifiability_set} is satisfied for $\mathcal{S}=\mathcal{I}$. {Moreover, if $\Omega_{\theta}=\Omega$, for each $\theta\in\Theta$, we say that System \eqref{general_sys} is identifiable on $\Theta$ with initial conditions in $\Omega$.}
\end{defi}

\begin{defi}[Observability]\label{observability}
    System \eqref{general_sys} is observable on $\Omega$ {with parameters in} $\Theta$ if Definition \ref{observability_set} is satisfied for $\mathcal{S}=\mathcal{I}$.
\end{defi}

Notice that identifiability (resp. observability) in a time set $\mathcal{S}$ implies identifiability (resp. observability)
in any time set $\mathcal{A}$ such that $\mathcal{S}\subset\mathcal{A}$. However, the converse is not necessarily true, as demonstrated by the following examples.

\begin{example}\label{ex1}
    Consider the following system: \begin{equation}\label{examplesys}\left\{\begin{array}{ccl}
        x'(t) & = & k, \quad x(0)=\xi, \\
        y_{(\xi,k)}(t) & = & \max(1,x(t)),
    \end{array}\right.\end{equation} where $k\in\Theta=(0,1]$ is unknown. Let $\Omega=\Omega_k=[0,\infty)$, $k\in\Theta$, be the positively invariant set w.r.t. the ODE given in \eqref{examplesys}, $\xi\in\Omega$, $\mathcal{I}=[0,\infty)$, and $f(x,k)=k$ (which is Lipschitz-continuous w.r.t. $x$ and continuous w.r.t. $k$). The unique solution of the ODE is $x(t)=kt+\xi$, $t\geq 0.$ The system is identifiable on $\Theta$ in $\mathcal{I}$ {with initial conditions in} $\Omega$, since, for any $\xi\in\Omega$, considering different $k_1,k_2\in\Theta$, we have that \begin{equation*}\max(1,k_1t+\xi)=k_1t+\xi\neq k_2t+\xi=\max(1,k_2t+\xi),\end{equation*}for all $t\geq 1/\min\{k_1,k_2\}$. However, if $\mathcal{S}=[0,1/2]\subset \mathcal{I}$, $\xi\in[0,1/2]$ and $k_1,k_2\in\Theta$, with $k_1\neq k_2$, then \begin{equation*}\max(1,k_1t+\xi)=1=\max(1,k_2t+\xi),\end{equation*}for any $t\in\mathcal{S}$, and hence the system is not identifiable on $\Theta$ in $\mathcal{S}$ {with initial conditions in} $\Omega$. \qed
\end{example}

\begin{example}
    We consider the same case as the one shown in Example \ref{ex1}. The system is observable on $\Omega$ in $\mathcal{I}$ {with parameters in} $\Theta$, since, for any $k\in\Theta$, considering different $x_0,x_0'\in\Omega$, we have that \begin{equation*}\max(1,kt+x_0)=kt+x_0\neq kt+x_0'=\max(1,kt+x_0'),\end{equation*}for all $t\geq 1/k$. However, if $\mathcal{S}=[0,1/2]\subset \mathcal{I}$ and $x_0,x_0'\in[0,1/2]\subset\Omega$, with $x_0\neq x_0'$, then \begin{equation*}\max(1,kt+x_0)=1=\max(1,kt+x_0'),\end{equation*}for any $k\in\Theta$, $t\in\mathcal{S}$, and hence the system is not observable on $\Omega$ in $\mathcal{S}$ {with parameters in} $\Theta$. \qed
\end{example}

A natural extension of this framework consists in treating the parameters as part of the states of an augmented system. If one extends the dynamics with $\dot{\theta}=0$, then both the identifiability and observability properties can be studied as a particular case of observability in higher dimension: If the extended system is observable, then the original system is both observable and identifiable. However, the reverse implication is not generally true. This is related to the \textit{joined observability-identifiability} of
a system (see \cite{Cunniffeetal, Tunali1987}).

We are now going to consider that we need to recover both $x_0$ {and $\theta_0$.}

\begin{defi}[Joined observability-identifiability in a time set]\label{joined_set}
    System \eqref{general_sys} is jointly observable-identifiable on $\Gamma_{\Theta}$ in $\mathcal{S}\subset\mathcal{I}$ if, for any $(x_0,\theta_0)\in\Gamma_{\Theta}$, a different $(x_0',\theta_0')\in\Omega\times\Theta$ produces a different output at some time $t\in\mathcal{S}$, i.e., $$h(x(t;x_0,\theta_0),\theta_0)\neq h(x(t;x_0',\theta_0'),\theta_0').$$
    Equivalently, if $h(x(t;x_0,\theta_0),\theta_0)= h(x(t;x_0',\theta_0'),\theta_0')$ for all $t\in\mathcal{S}$, this implies that $x_0=x_0'$ and $\theta_0=\theta_0'$.
\end{defi}

\begin{defi}[Joined observability-identifiability]\label{joined}
    System \eqref{general_sys} is jointly observable-identifiable on $\Gamma_{\Theta}$ if Definition \ref{joined_set} is satisfied for $\mathcal{S}=\mathcal{I}$.
\end{defi}

{Furth}ermore, joined observability-identifiability in a time set $\mathcal{S}$ implies joined observability-identifiability
in any time set $\mathcal{A}$ such that $\mathcal{S}\subset\mathcal{A}$.

Note that recovering the parameters and initial condition from the data up to time $t$ is equivalent to reconstructing the parameters and state at current time $t$, because the dynamics is deterministic and reversible.

Our main focus will be on joined observability-identifiability; however, we will present the results in a way that they may be applicable separately to observability or identifiability. In particular, as commented before, it is clear that joined observability-identifiability implies both observability and identifiability independently.

In the following, we recall the well-known concept of Lie derivative, which is commonly used for studying observability and identifiability. We briefly review it here for completeness.

\subsection{About Lie derivatives}

Let $\NN_0=\NN\cup\{0\}$. Consider $\varphi\in\mathcal{C}^{d}(\Omega; \mathbb{R}^m), \ F\in\mathcal{C}^{\max\{0,d-1\}}(\Omega; \mathbb{R}^n)$, for some $d\in\mathbb{N}\cup\{0\}$, and the map \begin{equation*}\begin{array}{lccl}\mathcal{L}_{F,\varphi,d}: & \Omega & \longrightarrow & \mathbb{R}^{m\times (d+1)} \\ & \xi & \longmapsto & \left(L_F^0\varphi(\xi),L_F^1\varphi(\xi),\dots,L^d_F\varphi(\xi)\right),\end{array}\end{equation*}where $L_F^0\varphi=\varphi$ and, if $d\geq1$, $L^1_F\varphi\in\mathcal{C}^{d-1}(\Omega;\ \mathbb{R}^m)$ is the Lie derivative of $\varphi$ with respect to the vector field $F$, i.e., for any $\xi\in\Omega$, \begin{equation*}L_F^1\varphi(\xi)=\mathrm{D}\varphi(\xi)F(\xi),\end{equation*}where $\mathrm{D}\varphi$ is the $m\times n$ Jacobian matrix of $\varphi$. In particular, if $z(\cdot;\xi)$ is a solution of \begin{equation}\label{system_z}\dot{z}(t;\xi)=F(z(t;\xi)),\quad z(0;\xi)=\xi,\end{equation}such that $\Omega$ is positively invariant with respect to this system, then, for any $t\geq 0$, \begin{equation}\label{LFt}L_F^1\varphi(z(t;\xi))=\mathrm{D}\varphi(z(t;\xi))F(z(t;\xi))=\dfrac{\mathrm{d} }{\mathrm{d} t}\varphi(z(t;\xi)).\end{equation}
Then,\begin{equation}\label{L1xi}
    L_F^1\varphi(\xi)=\left.\dfrac{\mathrm{d}}{\mathrm{d} t}\varphi(z(t;\xi))\right|_{t=0}.
\end{equation} If $d\geq2$, we also define $L^k_F\varphi\in\mathcal{C}^{d-k}(\Omega; \mathbb{R}^m)$, for $k\in\{2,\dots,d\}$, by recursion as follows: \begin{equation*}L^k_F\varphi=L_F^1\left(L^{k-1}_F\varphi\right).\end{equation*}

\begin{lemma}\label{Lxi}
    Let $\varphi\in\mathcal{C}^d(\Omega;\mathbb{R}^m)$, $d\in\mathbb{N}_0$, $\xi\in\Omega$, and consider System \eqref{system_z}, where $F\in\mathcal{C}^{\max\{0,d-1\}}(\Omega;\mathbb{R}^n)$ and $\Omega$ is positively invariant with respect to the system. Then, for $t\geq 0$, $k\in\{0,\dots,d\}$, \begin{equation*}L^k_F\varphi(z(t;\xi))=\dfrac{\mathrm{d}^k}{\mathrm{d} t^k}\varphi (z(t;\xi)) \ \text{ and, in particular, } \ L^k_F\varphi(\xi)=\left.\dfrac{\mathrm{d}^k}{\mathrm{d} t^k}\varphi (z(t;\xi))\right|_{t=0}.\end{equation*}
\end{lemma}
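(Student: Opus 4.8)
The plan is to prove Lemma~\ref{Lxi} by induction on $k$, relying on the chain rule and the fact that along a solution of System~\eqref{system_z} the spatial Lie derivative coincides with the time derivative of the composed function.

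First I would establish the base case $k=0$. Here $L_F^0\varphi=\varphi$ by definition, so $L_F^0\varphi(z(t;\xi))=\varphi(z(t;\xi))=\frac{\mathrm{d}^0}{\mathrm{d} t^0}\varphi(z(t;\xi))$ trivially, and evaluating at $t=0$ uses $z(0;\xi)=\xi$. The case $k=1$ is exactly equation~\eqref{LFt}, already recorded in the excerpt: since $z(\cdot;\xi)$ solves $\dot z=F(z)$, the chain rule gives $\frac{\mathrm{d}}{\mathrm{d} t}\varphi(z(t;\xi))=\mathrm{D}\varphi(z(t;\xi))\dot z(t;\xi)=\mathrm{D}\varphi(z(t;\xi))F(z(t;\xi))=L_F^1\varphi(z(t;\xi))$.

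For the inductive step, I would assume the claim holds for some $k-1\in\{0,\dots,d-1\}$, i.e.\ $L_F^{k-1}\varphi(z(t;\xi))=\frac{\mathrm{d}^{k-1}}{\mathrm{d} t^{k-1}}\varphi(z(t;\xi))$, and prove it for $k$. The key observation is that $L_F^{k-1}\varphi\in\mathcal{C}^{d-(k-1)}(\Omega;\mathbb{R}^m)$, so since $k\le d$ it is at least $\mathcal{C}^1$ and one may legitimately apply the $k=1$ identity~\eqref{LFt} to the function $\psi:=L_F^{k-1}\varphi$ in place of $\varphi$. This yields $\frac{\mathrm{d}}{\mathrm{d} t}\psi(z(t;\xi))=L_F^1\psi(z(t;\xi))=L_F^1\bigl(L_F^{k-1}\varphi\bigr)(z(t;\xi))=L_F^k\varphi(z(t;\xi))$, using the recursive definition $L_F^k\varphi=L_F^1(L_F^{k-1}\varphi)$. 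Combining this with the inductive hypothesis gives
\begin{equation*}
L_F^k\varphi(z(t;\xi))=\frac{\mathrm{d}}{\mathrm{d} t}\,L_F^{k-1}\varphi(z(t;\xi))=\frac{\mathrm{d}}{\mathrm{d} t}\,\frac{\mathrm{d}^{k-1}}{\mathrm{d} t^{k-1}}\varphi(z(t;\xi))=\frac{\mathrm{d}^{k}}{\mathrm{d} t^{k}}\varphi(z(t;\xi)),
\end{equation*}
which closes the induction. The particular case then follows by setting $t=0$ and using $z(0;\xi)=\xi$.

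The only delicate point, rather than a genuine obstacle, is bookkeeping the regularity so that each derivative taken is justified: one must check that at every stage the function being differentiated is $\mathcal{C}^1$, which is guaranteed precisely by the hypothesis $\varphi\in\mathcal{C}^d$, $F\in\mathcal{C}^{\max\{0,d-1\}}$ and the range restriction $k\le d$, since $L_F^{k-1}\varphi$ loses one order of smoothness per Lie derivative. I would also note that positive invariance of $\Omega$ ensures $z(t;\xi)\in\Omega$ for all $t\ge 0$, so every evaluation $L_F^j\varphi(z(t;\xi))$ makes sense on the domain where these functions are defined. No step requires more than the chain rule and the defining ODE, so the proof is essentially a clean induction once the smoothness indices are tracked.
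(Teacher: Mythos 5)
Your proposal is correct and follows essentially the same route as the paper: an induction on $k$ whose base case is the chain-rule identity \eqref{LFt}--\eqref{L1xi} and whose inductive step applies that identity to $L_F^{k-1}\varphi$ via the recursive definition $L_F^k\varphi=L_F^1\bigl(L_F^{k-1}\varphi\bigr)$. Your explicit bookkeeping of the regularity $L_F^{k-1}\varphi\in\mathcal{C}^{d-(k-1)}(\Omega;\mathbb{R}^m)$ and of the role of positive invariance is a slightly more careful write-up of steps the paper leaves implicit, but the argument is the same.
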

\begin{proof}
    For any $\xi\in\Omega$, $t\geq 0$, if $d\geq1$, we know that \eqref{LFt}-\eqref{L1xi} is true. By induction, if $d\geq2$, assume that \begin{equation*}L^{k-1}_F\varphi(z(t;\xi))=\dfrac{\mathrm{d}^{k-1}}{\mathrm{d} t^{k-1}}\varphi (z(t;\xi)) \text{ is true for some $k\in\{2,\dots,d\}$, and hence } L^{k-1}_F\varphi(\xi)=\left.\dfrac{\mathrm{d}^{k-1}}{\mathrm{d} t^{k-1}}\varphi (z(t;\xi))\right|_{t=0}.\end{equation*}Then, for $k$, we have \begin{equation*}\dfrac{\mathrm{d}^k}{\mathrm{d} t^k}\varphi(z(t;\xi))=\dfrac{\mathrm{d}}{\mathrm{d} t}L_F^{k-1}\varphi(z(t;\xi))=\mathrm{D}L_F^{k-1}\varphi(z(t;\xi))F(z(t;\xi))=L_F^k\varphi(z(t;\xi)), \text{ and } L_F^k\varphi(\xi)=\left.\dfrac{\mathrm{d}^k}{\mathrm{d} t^k}\varphi (z(t;\xi))\right|_{t=0},\end{equation*}as we wanted to prove.
\end{proof}

Let us now, given $\theta\in\Theta$, denote $h_{\theta}( \cdot )=h(\cdot ,\theta)$ and $f_{\theta}( \cdot )=f(\cdot  ,\theta)$, and assume $h_{\theta}\in\mathcal{C}^{d}(\Omega;\mathbb{R}^m), \ f_{\theta}\in\mathcal{C}^{\max\{0,d-1\}}(\Omega;\mathbb{R}^n)$, for some $d\in\mathbb{N}\cup\{0\}$. Then, $y_{(\xi,\theta)}\in\mathcal{C}^{d}(\mathcal{I};\mathbb{R}^m)$. Indeed, for all $t\geq 0$, \begin{equation*}\dot{y}_{(\xi,\theta)}(t)=\dfrac{\mathrm{d}}{\mathrm{d} t}h(x(t;\xi,\theta))=\mathrm{D} h_{\theta}(x(t;\xi,\theta)) f_{\theta}(x(t;\xi,\theta)) \implies \dot{y}_{(\xi,\theta)}(t)=L^1_{f_{\theta}}h_{\theta}(x(t;\xi,\theta)).\end{equation*}
By Lemma \ref{Lxi}, we have, for all $t\geq 0$ and $k\in\{1,\dots,d\}$, \begin{equation*}y^{(k)}_{(\xi,\theta)}(t)=\dfrac{\mathrm{d}^k}{\mathrm{d} t^k}h(x(t;\xi,\theta))=L_{f_{\theta}}^{k}h_{\theta}(x(t;\xi,\theta)) \text{ and, hence, } y^{(k)}_{(\xi,\theta)}(0)=\left.\dfrac{\mathrm{d}^k}{\mathrm{d} t^k}h(x(t;\xi,\theta))\right|_{t=0}=L_{f_{\theta}}^{k}h_{\theta}(\xi),\end{equation*}where we denote $y^{(k)}=\dfrac{\mathrm{d}^k y}{\mathrm{d} t^k}$, $k\in \mathbb{N},$ when $y\in\mathcal{C}^k(\mathcal{I})$. Then, denoting $y^{(0)}=y$, define $\mathcal{L}_{f_{\theta},h_{\theta},d}:\Omega\rightarrow\RR^{m\times(d+1)}$ as
\begin{equation*}\mathcal{L}_{f_{\theta},h_{\theta},d}(\xi)=\left(y^{(0)}_{(\xi,\theta)}(0),\dots,y^{(d)}_{(\xi,\theta)}(0)\right).\end{equation*}

Roughly speaking, the Lie derivatives of an output of the considered system allow to express the time derivatives of the output not as functions of time, but as functions of the state vector of the system.

In the following, we denote $y_{(\xi,\theta)}$, $\dot{y}_{(\xi,\theta)}$ and $y^{(k)}_{(\xi,\theta)}$ as $y$, $\dot{y}$ and $y^{(k)}$, respectively, when the context is clear.

Let us now present our main results along with the algorithms to recover both the parameters and the initial condition.

\subsection{Main results}

We revisit and extend some results from \cite{Kubik} to establish sufficient hypotheses for System \eqref{general_sys} to be observable, identifiable or jointly observable-identifiable. These results generalize classical approaches (see \cite{Kubik} for a thorough review).

We start recalling a classical result on observability based on Lie derivatives (\cite{Cunniffeetal, HighGains, ORC, Luenberger}). This result is of particular interest in the nonlinear context (for which the Cayley-Hamilton theorem is not available, see \cite[Section 2.2]{Cunniffeetal}). We present the result and a short proof adapted to our framework.

\begin{theorem}
\label{generalization_obs}
Let $\Omega\subset\mathbb{R}^n$ positively invariant with respect to the ODE system given in \eqref{general_sys}, $h_{\theta,i}\in\mathcal{C}^{d_i}(\Omega;\mathbb{R}^m)$, for some $d_i\in\mathbb{N}\cup\{0\}$, $i\in\{1,\dots,m\}$, with $m$ the number of scalar outputs, and $f_{\theta}\in\mathcal{C}^{d-1}(\Omega;\mathbb{R}^n)$, $d=\max\{1,d_1,\dots,d_m\}$, for any $\theta\in\Theta$. If \begin{equation*}\begin{array}{lclcl}
\mathcal{L}_{f_{\theta},h_{\theta},\{d_1,\dots,d_m\}} & : & \Omega & \rightarrow & \RR^{m+d_1+\dots+d_m} \\ && \xi & \mapsto & \left(\mathcal{L}_{f_{\theta},h_{\theta,1},d_1}(\xi), \dots, \mathcal{L}_{f_{\theta},h_{\theta,m},d_m}(\xi) \right)
\end{array}
\end{equation*}
is injective in $\Omega$, then System \eqref{general_sys} is observable on $\Omega$ in any semi-open interval $[a,b)\subset\mathcal{I}$ with parameters in $\Theta$.
\end{theorem}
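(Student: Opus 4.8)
The plan is to fix an arbitrary parameter $\theta\in\Theta$ and a nondegenerate interval $[a,b)\subset\mathcal{I}$ (so $a<b$), and to show that whenever two initial conditions $x_0,x_0'\in\Omega$ produce identical outputs throughout $[a,b)$, they must coincide; this is exactly Definition \ref{observability_set}. Thus I would assume $h(x(t;x_0,\theta),\theta)=h(x(t;x_0',\theta),\theta)$ for all $t\in[a,b)$, i.e., $y_{(x_0,\theta)}(t)=y_{(x_0',\theta)}(t)$ on $[a,b)$. By the smoothness hypotheses ($h_{\theta,i}\in\mathcal{C}^{d_i}$ and $f_\theta\in\mathcal{C}^{d-1}$ with $d=\max\{1,d_1,\dots,d_m\}$), the discussion preceding Lemma \ref{Lxi} guarantees that the $i$-th scalar component of the output is of class $\mathcal{C}^{d_i}$ on $\mathcal{I}$.

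First I would pass from equality of the outputs to equality of their derivatives. Since $a<b$, the open interval $(a,b)$ is nonempty, and on it two $\mathcal{C}^{d_i}$ functions that coincide have coinciding derivatives up to order $d_i$; hence the $k$-th derivative of the $i$-th output component agrees for $x_0$ and $x_0'$, for every $i\in\{1,\dots,m\}$, every $k\in\{0,\dots,d_i\}$, and every $t\in(a,b)$. I would then fix any interior point $t^*\in(a,b)$ and set $\bar{x}=x(t^*;x_0,\theta)$ and $\bar{x}'=x(t^*;x_0',\theta)$, both of which lie in $\Omega$ by positive invariance. Invoking the identity $y^{(k)}_{(\xi,\theta)}(t)=L^k_{f_\theta}h_{\theta,i}(x(t;\xi,\theta))$ established via Lemma \ref{Lxi} (applied componentwise), the matched derivatives translate into $L^k_{f_\theta}h_{\theta,i}(\bar{x})=L^k_{f_\theta}h_{\theta,i}(\bar{x}')$ for all $i$ and all $k\le d_i$. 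This is precisely the assertion that $\mathcal{L}_{f_\theta,h_\theta,\{d_1,\dots,d_m\}}(\bar{x})=\mathcal{L}_{f_\theta,h_\theta,\{d_1,\dots,d_m\}}(\bar{x}')$, so injectivity of this map forces $\bar{x}=\bar{x}'$, that is, $x(t^*;x_0,\theta)=x(t^*;x_0',\theta)$.

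It then remains to propagate equality of the states at the single time $t^*$ back to the initial conditions. Here I would invoke uniqueness of solutions of the Cauchy problem: both $x(\cdot;x_0,\theta)$ and $x(\cdot;x_0',\theta)$ solve $\dot{x}=f(x,\theta)$ and agree at $t^*$, so by backward uniqueness (guaranteed by the local Lipschitz continuity of $f_\theta$ in $x$, i.e.\ the reversibility of the flow noted after Definition \ref{joined}) they coincide on all of $[0,t^*]$; evaluating at $t=0$ yields $x_0=x_0'$. Since $\theta\in\Theta$ and $[a,b)\subset\mathcal{I}$ were arbitrary, the system is observable on $\Omega$ in every such interval.

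I expect the only delicate points to be the transition from equality on the half-open interval to equality of all derivatives, and the legitimacy of evaluating the Lie-derivative identity pointwise; choosing an \emph{interior} $t^*\in(a,b)$ is what sidesteps any one-sided-derivative subtleties at the endpoint $a$. The backward-uniqueness step is standard for Lipschitz vector fields, but it is worth stating explicitly, since observability is phrased in terms of the initial condition $x_0$ rather than the state $x(t^*;x_0,\theta)$ that the injectivity argument directly controls.
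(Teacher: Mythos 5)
Your proposal is correct and follows essentially the same route as the paper's proof: equate the outputs on the interval, pass to derivatives, use Lemma \ref{Lxi} to convert them into the stacked Lie derivatives $\mathcal{L}_{f_{\theta},h_{\theta},\{d_1,\dots,d_m\}}$ evaluated at the two states at a common time, invoke injectivity to equate those states, and conclude $x_0=x_0'$ by (backward) uniqueness of solutions. The only cosmetic difference is that the paper takes an arbitrary $\tilde{t}\in[a,b)$ and time-shifts so the Lie-derivative identity is applied at $t=0$ (equality on $[0,b-\tilde{t})$ still determines the one-sided derivatives there), whereas you fix an interior $t^*\in(a,b)$ and apply the identity directly at $t^*$ -- both are valid instances of the same argument.
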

\begin{proof}
    Given $\theta\in\Theta$, let $x_0,x_0'\in\Omega$ such that \begin{equation*}h(x(t;x_0,\theta),\theta)=h(x(t;x_0',\theta),\theta), \quad \forall\, t\in[a,b).\end{equation*}Given $\tilde{t}\in[a,b)$, let $\xi=x(\tilde{t};x_0,\theta)$ and $\xi'=x(\tilde{t};x_0',\theta).$ Since $\Omega$ is positively invariant w.r.t. the ODE system given in \eqref{general_sys}, $\xi$, $\xi'\in\Omega$. Then, \begin{equation*}h(x(t;\xi,\theta),\theta)=h(x(t;\xi',\theta),\theta), \quad \forall\, t\in[0,b-\tilde{t}).\end{equation*}Notice that $0\in[a-\tilde{t},b-\tilde{t})$ and $[0,b-\tilde{t})\subset\mathcal{I}$ is non-empty. This implies that \begin{equation*}y^{(k)}_{(\xi,\theta),i}(t)=y^{(k)}_{(\xi',\theta),i}(t),\quad \forall\, t\in[0,b-\tilde{t}), \ k\in\{0,\dots,d_i\},\ i\in\{1,\dots,m\},\end{equation*}where $y^{(k)}_{(\xi,\theta)}=\left(y^{(k)}_{(\xi,\theta),1},\dots,y^{(k)}_{(\xi,\theta),m}\right)$. In particular, $y_{(\xi,\theta),i}^{(k)}(0)=y_{(\xi',\theta),i}^{(k)}(0)$, $k\in\{0,\dots,d_i\}$, $i\in\{1,\dots,m\}$. Then, \begin{equation*}\mathcal{L}_{f_{\theta},h_{\theta},\{d_1,\dots,d_m\}}(\xi)=\mathcal{L}_{f_{\theta},h_{\theta},\{d_1,\dots,d_m\}}(\xi').\end{equation*}Since $\mathcal{L}_{f_{\theta},h_{\theta},\{d_1,\dots,d_m\}}$ is injective in $\Omega$, for any $\theta\in\Theta$, and $\xi,\xi'\in\Omega$ due to the positive invariance, this implies that $\xi=\xi'$, i.e., $x(\tilde{t};x_0,\theta)=x(\tilde{t};x_0',\theta)$. Due to the uniqueness of solutions of System \eqref{general_sys}, this implies that $x_0=x_0'$.
    
    Hence, System \eqref{general_sys} is observable on $\Omega$ in any $[a,b)\subset\mathcal{I}$ with parameters in $\Theta$.
\end{proof}

For {$(\xi,\theta)\in\Gamma_{\Theta}$}, $\{d_i\}_{i=1}^m\subset\NN_0$, we define the notation $$ \mathbf{y}_{(\xi,\theta)}^{(d_1,\dots,d_m)}(t)\coloneqq \left(y_{(\xi,\theta),1}^{(0)}(t), \dots,y_{(\xi,\theta),1}^{(d_1)}(t),\dots,y_{(\xi,\theta),m}^{(0)}(t), \dots, y^{(d_m)}_{(\xi,\theta),m}(t)\right),$$
and may use the shorthand notation $\mathbf{y}^{(d_1,\dots,d_m)}$ when there is no ambiguity.

The following is the main result in \cite{Kubik}, adapted to Definition \ref{identifiability_set}, which provides a sufficient condition to ensure recoverability of parameters.

\begin{theorem}
\label{generalization_ident}
   Let $h_{\theta,i}\in\mathcal{C}^{d_i'}(\Omega;\RR)$, for some $d_i'\in\NN_0$\footnote{We use a different notation with respect to Theorem \ref{generalization_obs} to explicitly state that these orders may be different.}, $i\in\{1,\dots,m\}${, and} $f_{\theta}\in\mathcal{C}^{d'-1}(\Omega;\RR^n)$, $d'=\max\{1,d_1',\dots,d_m'\}$, for any $\theta\in\Theta$. Consider $\mathcal{D}\subset\RR^{d_1'+\dots+d_m'+m}$ such {that, for all} {$(\xi,\theta)\in\Gamma_{\Theta}$}, ${\mathbf{y}^{(d_1',\dots,d_m')}(t)\in\mathcal{D}}$, for all $t\in\mathcal{I}$. {Let $\mathcal{S}\subset\mathcal{I}$ be such that every connected component of $S$ contains an open interval. Assume} there {exist} $g:\mathcal{D}\rightarrow \RR^{q+p}$ and $r:\Theta\rightarrow \RR^q$, for some $q,p\in\NN$, satisfying:
    \begin{enumerate}
        \item[\rm(C1)] $g=(g_{1,0},\dots,g_{1,q_1},\dots,g_{p,0},\dots,g_{p,q_p})$ and $r=(r_{1,1},\dots,r_{1,q_1},\dots,r_{p,1},\dots,r_{p,q_p})$, with $q_1+\dots+q_{p}=q$, satisfy that \begin{equation}\label{linear_eq}
        {g_{j,0}(\mathbf{y}^{(d_1',\dots,d_m')})=\sum_{l=1}^{q_j} r_{j,l}(\theta) g_{j,l}(\mathbf{y}^{(d_1',\dots,d_m')})},\end{equation}
        in $\mathcal{S}$, for all $j\in\{1,\dots,p\}$, for any {$(\xi,\theta)\in\Gamma_{\Theta}$},
        \item[\rm(C2)] $r$ is injective, and
        \item[\rm(C3)] for any $j\in\{1,\dots,p\}$, {$(\xi,\theta)\in\Gamma_{\Theta}$}, we have that {$g_{j,l}(\mathbf{y}^{(d_1',\dots,d_m')}(t))$}, $l\in\{1,\dots,q_j\}$, are linearly independent functions with respect to $t\in\mathcal{S}$.
    \end{enumerate}Then, System \eqref{general_sys} is identifiable on $\Theta$ in $\mathcal{S}$ with initial conditions {consistent with the family $\{\Omega_{\theta}\}_{\theta\in\Theta}$}.
\end{theorem}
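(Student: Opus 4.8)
The plan is to prove the contrapositive of Definition \ref{identifiability_set} directly, using the input--output relation \eqref{linear_eq} to convert equality of outputs into equality of the parameter-functions $r(\theta)$, and then invoking injectivity of $r$. Concretely, I would fix $\theta_0\in\Theta$ and $\xi\in\Omega_{\theta_0}$, so that $(\xi,\theta_0)\in\Gamma_{\Theta}$, take a competing $\theta_0'\in\Theta$, and assume
$$h(x(t;\xi,\theta_0),\theta_0)=h(x(t;\xi,\theta_0'),\theta_0'),\qquad \forall\,t\in\mathcal{S}.$$
The target is to deduce $\theta_0=\theta_0'$, which is exactly identifiability on $\Theta$ in $\mathcal{S}$ with initial conditions consistent with $\{\Omega_{\theta}\}_{\theta\in\Theta}$.

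The first substantive step is to upgrade this equality of $0$-th order outputs to equality of the \emph{full} observable vectors $\mathbf{y}^{(d_1',\dots,d_m')}_{(\xi,\theta_0)}$ and $\mathbf{y}^{(d_1',\dots,d_m')}_{(\xi,\theta_0')}$ on all of $\mathcal{S}$. Here I would use the hypothesis that every connected component $J$ of $\mathcal{S}$ contains an open interval, so that $J$ is a non-degenerate interval. On the interior of $J$ the two $\mathcal{C}^{d_i'}$ output functions agree on a two-sided neighborhood, hence so do all derivatives up to order $d_i'$; at an endpoint of $J$ lying in $\mathcal{S}$ the genuine derivative (existing since the outputs are $\mathcal{C}^{d_i'}$ on $\mathcal{I}$) coincides with the one-sided derivative computed from inside $J$, and the latter depends only on values of the output where the two functions agree. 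Thus all derivatives match at every point of $\mathcal{S}$, giving $\mathbf{y}^{(d_1',\dots,d_m')}_{(\xi,\theta_0)}(t)=\mathbf{y}^{(d_1',\dots,d_m')}_{(\xi,\theta_0')}(t)=:\mathbf{Y}(t)\in\mathcal{D}$ for every $t\in\mathcal{S}$.

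With the common observable $\mathbf{Y}$ in hand, I would evaluate the input--output identity \eqref{linear_eq} along the two trajectories and subtract. Since the left-hand side $g_{j,0}(\mathbf{Y}(t))$ is the same in both cases, for each $j\in\{1,\dots,p\}$ this yields
$$\sum_{l=1}^{q_j}\bigl(r_{j,l}(\theta_0)-r_{j,l}(\theta_0')\bigr)\,g_{j,l}(\mathbf{Y}(t))=0,\qquad \forall\,t\in\mathcal{S}.$$
Because this relation now holds on the \emph{whole} of $\mathcal{S}$, I can apply condition (C3) for the admissible pair $(\xi,\theta_0)\in\Gamma_{\Theta}$: the functions $t\mapsto g_{j,l}(\mathbf{Y}(t))$, $l\in\{1,\dots,q_j\}$, are linearly independent on $\mathcal{S}$, so all coefficients vanish, i.e. $r_{j,l}(\theta_0)=r_{j,l}(\theta_0')$ for every $j,l$. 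Assembling these equalities gives $r(\theta_0)=r(\theta_0')$, and the injectivity of $r$ from (C2) yields $\theta_0=\theta_0'$, completing the argument.

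I expect the delicate point to be the second step: passing from equality of the outputs to equality of the entire derivative vector on \emph{all} of $\mathcal{S}$. This is precisely where the ``open interval in each connected component'' assumption is indispensable, since otherwise the linear relation would only be available on a proper open subinterval, on which the independence granted by (C3) over $\mathcal{S}$ could not be used. A secondary care point, specific to this parameter-dependent extension, is the legitimacy of invoking \eqref{linear_eq} along the competing trajectory $(\xi,\theta_0')$: as $\xi$ is only assumed to lie in $\Omega_{\theta_0}$, one must ensure the identity is available for $(\xi,\theta_0')$ as well (e.g. because $(\xi,\theta_0')\in\Gamma_{\Theta}$, or because \eqref{linear_eq} is structural for the produced output). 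Once these two points are secured, the remaining algebra --- subtracting the identities and reading off the coefficients via (C3) and (C2) --- is routine.
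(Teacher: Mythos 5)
Your proposal is correct and follows essentially the same route as the paper's proof: equality of outputs is upgraded to equality of all derivatives up to order $d_i'$ on $\mathcal{S}$ via the open-interval assumption on the connected components, the two instances of \eqref{linear_eq} are subtracted to get $\sum_{l=1}^{q_j}\bigl(r_{j,l}(\theta_0)-r_{j,l}(\theta_0')\bigr)g_{j,l}\equiv 0$ on $\mathcal{S}$, and (C3) followed by (C2) yield $\theta_0=\theta_0'$. The care point you flag --- that invoking \eqref{linear_eq} along the competing trajectory $(\xi,\theta_0')$ requires $(\xi,\theta_0')\in\Gamma_{\Theta}$ or the relation to hold structurally, whereas $\xi$ is only assumed to lie in $\Omega_{\theta_0}$ --- is equally implicit and unaddressed in the paper's own proof, so your write-up is, if anything, more careful than the original on that step.
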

\begin{proof}
Given $\theta_0\in\Theta$, $\xi\in\Omega_{\theta_0}$, let $\theta_0'\in\Theta$ such that
\begin{equation*}
h_{\theta_0}(x(t;\xi,\theta_0))=h_{\theta_0'}(x(t;\xi,\theta_0')), \quad \forall\, t\in\mathcal{S},
\end{equation*}
i.e.
\begin{equation*}
y_{(\xi,\theta_0)}(t)=y_{(\xi,\theta_0')}(t),\quad \forall\,t\in\mathcal{S}.
\end{equation*}
Then, since every connected part of $\mathcal{S}$ contains some open interval, this implies that 
\begin{equation*}
y^{(k)}_{(\xi,\theta_0),i}(t)=y^{(k)}_{(\xi,\theta_0'),i}(t),\quad \forall\, t\in\mathcal{S},\, k\in\{0,\dots,d_i'\},\, i\in\{1,\dots,m\}.
\end{equation*}
Since
$$g_{j,0}(\mathbf{y}^{(d_1',\dots,d_m')}_{(\xi,\theta_0)})-g_{j,0}(\mathbf{y}^{(d_1',\dots,d_m')}_{(\xi,\theta_0')})\equiv 0$$
in $\mathcal{S}$, we obtain, from \eqref{linear_eq},
$$\sum_{l=1}^{q_j}\big(r_{j,l}(\theta_0)-r_{j,l}(\theta_0')\big)g_{j,l}(\mathbf{y}^{(d_1',\dots,d_m')}_{(\xi,\theta_0)})\equiv 0,$$in $\mathcal{S},$ for every $j\in\{1,\dots,p\}$. Given the linear independence of $g_{j,l}(\mathbf{y}^{(d_1',\dots,d_m')})$, $l\in\{1,\dots,q_j\}$, in $\mathcal{S}\subset\mathcal{I}$, for every $j\in\{1,\dots,p\}$, {$(\xi,\theta)\in\Gamma_{\Theta}$}, then $r(\theta_0)=r(\theta_0')$. Since $r$ is {injective,} {we have} $\theta_0=\theta_0'$. Hence, System \eqref{general_sys} is identifiable on $\Theta$ in $\mathcal{S}\subset\mathcal{I}$ with initial conditions {consistent with the family $\{\Omega_{\theta}\}_{\theta\in\Theta}$}.
\end{proof}

\begin{remark}\label{sufficient_conditions}
    It is important to remark that Theorem \ref{generalization_ident} gives only sufficient conditions for identifiability, but in Section \ref{particularSIR} we will see an example of a system which is identifiable and does not satisfy these conditions. Nevertheless, as we will see in Algorithm \ref{procedure1}, the conditions presented in Theorems \ref{generalization_obs} and \ref{generalization_ident} will allow us to have joined observability-identifiability of System \eqref{general_sys}. In \cite[Proposition 2.1]{VidalBlanchard2001}, the authors propose a result similar to Theorem \ref{generalization_ident}, giving conditions for it to be also necessary, in a context of rational equations and under the availability of observations at initial time.
\end{remark}

Therefore, given a system of ODEs, along with some (partial) observations, we can check the hypotheses in Theorems \ref{generalization_obs} and \ref{generalization_ident} in order to determine the observability, identifiability or joined observability-identifiability of our model and recover the initial condition and parameter vector. Notice that we need to be careful when choosing the positively invariant sets we are going to work with, so that the hypotheses mentioned above are satisfied. {Next, we present a theorem which} shows that, under certain conditions, {given} $y_{(x_0,\theta_0)}$ in some time set, {we can recover} {$(x_0,\theta_0)\in\Gamma_{\Theta}$}.

\begin{theorem}\label{det_univocally}
    For each $\theta\in\Theta$, let $\Omega_{\theta}\subset\Omega$ be positively invariant with respect to the ODE system in \eqref{general_sys}. Let {$(x_0,\theta_0)\in\Gamma_{\Theta}$}. Assume we know $y_{(x_0,\theta_0)}$ in $\mathcal{S}\subset\mathcal{I}$, $\mathcal{S}$ such that every connected component contains an open interval. If the hypotheses of Theorems \ref{generalization_obs} and \ref{generalization_ident} are satisfied, then {System \eqref{general_sys} is jointly observable-identifiable in $\mathcal{S}$ and} we can reconstruct the pair $(x_0,\theta_0)$ univocally using the values of $y_{(x_0,\theta_0)}$ and its derivatives at, at most, $q+1=q_1+\dots+q_p+1$ suitable values of $t\in\mathcal{S}$.
\end{theorem}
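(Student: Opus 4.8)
\section*{Proof proposal}

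The plan is to prove the two assertions in turn, using the same two ingredients already available: the input--output relation \eqref{linear_eq} together with (C2)--(C3), which pins down $\theta_0$, and the injectivity of the Lie-derivative map of Theorem \ref{generalization_obs}, which pins down the state. First I would establish the qualitative joined observability-identifiability of Definition \ref{joined_set} on $\mathcal{S}$, by showing $\theta_0=\theta_0'$ and then $x_0=x_0'$; afterwards I would turn the same relations into an effective finite-sampling reconstruction and count the instants used.

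For the joined property, take $(x_0,\theta_0)\in\Gamma_{\Theta}$ and a competitor $(x_0',\theta_0')\in\Omega\times\Theta$ with $y_{(x_0,\theta_0)}(t)=y_{(x_0',\theta_0')}(t)$ for all $t\in\mathcal{S}$. Since every connected component of $\mathcal{S}$ contains an open interval, equality of the outputs propagates to equality of all their derivatives, so $\mathbf{y}^{(d_1',\dots,d_m')}_{(x_0,\theta_0)}=\mathbf{y}^{(d_1',\dots,d_m')}_{(x_0',\theta_0')}$ on $\mathcal{S}$ and every $g_{j,l}$ agrees along the two trajectories. Writing \eqref{linear_eq} along each trajectory, with parameters $\theta_0$ and $\theta_0'$ respectively, and subtracting, the common values of $g_{j,0}$ and $g_{j,l}$ cancel and leave $\sum_{l=1}^{q_j}\big(r_{j,l}(\theta_0)-r_{j,l}(\theta_0')\big)\,g_{j,l}\big(\mathbf{y}^{(d_1',\dots,d_m')}_{(x_0,\theta_0)}\big)\equiv 0$ on $\mathcal{S}$ for each $j$. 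Because $(x_0,\theta_0)\in\Gamma_{\Theta}$, condition (C3) applies to this trajectory and forces $r_{j,l}(\theta_0)=r_{j,l}(\theta_0')$ for all $j,l$, i.e. $r(\theta_0)=r(\theta_0')$; the injectivity (C2) then gives $\theta_0=\theta_0'$. With a common parameter the outputs coincide on a semi-open interval contained in $\mathcal{S}$, so Theorem \ref{generalization_obs} yields $x_0=x_0'$, which is Definition \ref{joined_set} on $\mathcal{S}$.

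For the reconstruction I would read \eqref{linear_eq} as a linear system for the scalar unknowns $r_{j,l}(\theta_0)$: the data $\mathbf{y}^{(d_1',\dots,d_m')}_{(x_0,\theta_0)}$ are known, hence so are all the $g_{j,0}$ and $g_{j,l}$, and for each fixed block $j$ the relation is linear in $(r_{j,1}(\theta_0),\dots,r_{j,q_j}(\theta_0))$. The key lemma is the standard fact that finitely many linearly independent functions admit sample points at which the evaluation (generalised Vandermonde) matrix is nonsingular; applied to the $g_{j,l}\big(\mathbf{y}^{(d_1',\dots,d_m')}(\cdot)\big)$, independent on $\mathcal{S}$ by (C3), it furnishes $q_j$ instants at which the block-$j$ system is invertible, so that $r_{j,\cdot}(\theta_0)$ is recovered. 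Collecting the $p$ blocks uses at most $q_1+\dots+q_p=q$ instants, after which (C2) returns $\theta_0$. Finally, at one further suitable instant $t^{\ast}$ the injectivity of $\mathcal{L}_{f_{\theta_0},h_{\theta_0},\{d_1,\dots,d_m\}}$, now that $\theta_0$ is known, inverts the evaluation $\xi\mapsto\mathcal{L}_{f_{\theta_0},h_{\theta_0},\{d_1,\dots,d_m\}}(\xi)$ to return the state $x(t^{\ast};x_0,\theta_0)$, which by determinism and reversibility of the flow is equivalent to recovering $x_0$. This accounts for at most $q+1$ instants in all.

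The main obstacle I anticipate is twofold. First, the finite-sampling lemma must be invoked with care: linear independence of the $g_{j,l}\circ\mathbf{y}$ guarantees the existence of good instants but not their location, and it is precisely here that the hypothesis that every connected component of $\mathcal{S}$ contain an open interval is used, both to give the derivatives meaning and to provide a continuum of candidate sample points. Second, one must justify that \eqref{linear_eq} may be used along the competitor trajectory $(x_0',\theta_0')$ although it need only lie in $\Omega\times\Theta$; this is legitimate because \eqref{linear_eq} is a structural identity satisfied by any output generated with a given parameter, whereas the delicate condition (C3) is applied only to the genuine trajectory $(x_0,\theta_0)\in\Gamma_{\Theta}$. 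Once these two points are settled, the remaining steps are routine.
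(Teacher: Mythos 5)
Your proposal is correct and follows essentially the same route as the paper's proof: the same finite-sampling lemma (invoked in the paper as \cite[Lemma 1]{Kubik}) applied blockwise to the linearly independent functions $g_{j,l}\big(\mathbf{y}^{(d_1',\dots,d_m')}_{(x_0,\theta_0)}\big)$ to solve \eqref{linear_eq} for $r(\theta_0)$ at up to $q$ instants, inversion of $r$ via (C2), then one (possibly reused) instant $\tilde{t}$ at which the injective map $\mathcal{L}_{f_{\theta_0},h_{\theta_0},\{d_1,\dots,d_m\}}$ returns the state, followed by backward integration to recover $x_0$, with the same count of at most $q+1$ instants. The only difference is that you make the qualitative joined observability-identifiability explicit (subtracting the two instances of \eqref{linear_eq}, applying (C3) along the genuine trajectory, then invoking Theorem \ref{generalization_obs} on a semi-open interval inside a component of $\mathcal{S}$), whereas the paper leaves it implicit in the constructive reconstruction; your flagged use of \eqref{linear_eq} along the competitor trajectory is exactly the tacit step the paper's own proofs also make.
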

\begin{proof} 
Let $\phi_{j,l}(t)=g_{j,l}(\mathbf{y}^{(d_1',\dots,d_m')}_{(x_0,\theta_0)}(t))$, $t\in\mathcal{S}$, $l\in\{0,\dots,q_j\},\ j\in\{1,\dots,p\}${. By} hypothesis, {$\{\phi_{j,l}\}_{l=1}^{q_j}$} are linearly independent in $\mathcal{S}$, for each $j\in\{1,\dots,p\}$. {Then,} for every $j\in\{1,\dots,p\}$, there exist {(see \cite[Lemma 1]{Kubik})} $q_j$ different time instants {$\{t_{j,\ell}\}_{\ell=1}^{q_j}\subset\mathcal{S}$} such that {$$\det((\phi_{j,l}(t_{j,\ell}))_{l,\ell=1,\dots,q_j})\neq 0.$$}
Then, there exists a unique solution $\sigma$ to 
{\begin{equation}\label{linsys}\left(\phi_{j,1}(t_{j,\ell}) \ \dots \  \phi_{j,q_j}(t_{j,\ell})\right)\sigma=
        \phi_{j,0}(t_{j,\ell}),\ \forall \ \ell\in\{1,\dots,q_j\}.\end{equation}}
{Attend}ing to \eqref{linear_eq}, it satisfies $\sigma_{j,l}=r_{j,l}(\theta_0)$, $l\in\{1,\dots,q_j\}, \ j\in\{1,\dots,p\}$. {Since $r$ is injective}, such that $r^{-1}:r(\Theta)\rightarrow \Theta$, we {can recover} our original parameter vector $$\theta_0 =r^{-1}(\sigma_{1,1},\dots,\sigma_{p,q_p}).$$
        
Finally, to recover {$x_0$,} take some time $\tilde{t}\in\mathcal{S}$, which can be some $\tilde{t}\in\{t_{1,1},\dots,t_{p,q_p}\}$. {Since $\mathcal{L}_{f_{\theta_0},h_{\theta_0},\{d_1,\dots,d_m\}}$ is injective} in $\Omega$, there exists a unique $\tilde{\xi}\in\Omega$ such that $$\tilde{\xi}=\mathcal{L}^{-1}_{f_{\theta_0},h_{\theta_0},\{d_1,\dots,d_m\}} {(\mathbf{y}^{(d_1,\dots,d_m)}_{(x_0,\theta_0)}(\tilde{t}))},$$noticing that {$\mathbf{y}^{(d_1,\dots,d_m)}_{(x_0,\theta_0)}(\tilde{t})=\mathbf{y}^{(d_1,\dots,d_m)}_{(\tilde{\xi},\theta_0)}(0)$}. Since it is unique, it must satisfy $\tilde{\xi}\in\Omega_{\theta_0}$. We can recover {$x_0\in\Omega_{\theta_0}$} integrating backwards the ODE system in System \eqref{general_sys} knowing $\tilde{\xi}$, $\tilde{t}$ and $\theta_0$ ({since} $f_{\theta_0}$ is Lipschitz in $\Omega$ and, in particular, in $\Omega_{\theta_0}$ positively invariant w.r.t. the ODE system {of \eqref{general_sys}}). If $0\in\mathcal{S}$, {we directly choose} $\tilde{t}=0$.

This is, we have recovered $\theta_0$ and $x_0$ from the data univocally knowing {$y_{(x_0,\theta_0)}$ in $\mathcal{S}$,} using its value and the value of its derivatives at $q+1$ (at most) different time instants.
\end{proof}

In the following Algorithm \ref{procedure1}, we present a procedure to recover the unknowns given some observations, based on the proof of Theorem \ref{det_univocally}.

\begin{algorithm}\label{procedure1} Assume that we know $y_{(x_0,\theta_0)}$ (satisfying System \eqref{general_sys}) in $\mathcal{S}\subset\mathcal{I}$, such that every connected of $\mathcal{S}$ component contains an open interval, and $x_0\in\Omega$ and/or $\theta_0\in\Theta$ are unknown. The procedure to recover the unknowns is the following:
\begin{itemize}
    \item[Step 1.] If $x_0$ is unknown, find $\Omega_1\subset\Omega$ positively invariant with respect to the ODE system given in \eqref{general_sys} such that $x_0\in\Omega_1$, and $d_1,\dots,d_m\in\mathbb{N}_0$ such that, for any $\theta\in\Theta$, the following function is injective in $\Omega_1$:
    \begin{equation*}
    \mathcal{L}_{\theta,\{d_1,\dots,d_m\}}: \xi\mapsto \mathbf{y}^{(d_1,\dots,d_m)}_{(\xi,\theta)}(0).
    \end{equation*}
    Note: According to Theorem \ref{generalization_obs}, this ensures that System \eqref{general_sys} is observable on $\Omega_1$ in any semi-open interval $[a,b)\subset\mathcal{I}$ with parameters in $\Theta$.
    
    \item[Step 2.] If $\theta_0$ is unknown, find $\Omega_{2,\theta}\subset\Omega$, for any $\theta\in\Theta$, positively invariant with respect to the ODE system given in \eqref{general_sys} such that $x_0\in\Omega_{2,\theta_0}$, and maps $g: \mathcal{D}\rightarrow\mathbb{R}^{q+p}$ and $r: \Theta\rightarrow \mathbb{R}^p$, for some suitable $\mathcal{D}\subset \mathbb{R}^{d_1'+\dots+d_m'+m}$, $d_1',\dots,d_m'\in\mathbb{N}_0$, $q,p\in\mathbb{N}$, such that (C1), (C2) and (C3) of Theorem \ref{generalization_ident} are satisfied.
    
    Note: According to Theorem \ref{generalization_ident}, this ensures that System \eqref{general_sys} is identifiable on $\Theta$ in $\mathcal{S}$ with initial conditions {consistent with the family $\{\Omega_{2,\theta}\}_{\theta\in\Theta}$}.
    
    \item[Step 3.] If we have defined $\Omega_1$ and $\Omega_{2,\theta}$, set $\Omega_{\theta}=\Omega_1\cap\Omega_{2,\theta}$, for all $\theta\in\Theta$. If we have only defined $\Omega_1$, redefine $\Omega=\Omega_1$. If we have only defined $\Omega_{2,\theta}$, set $\Omega_{\theta}=\Omega_{2,\theta}$, for every $\theta\in\Theta$.
    
    \item[Step 4.] If $\theta_0$ is unknown, for each $j\in\{1,\dots,p\}$, find $q_j$ different time instants $t_{j,1},\dots,t_{j,q_j}\in\mathcal{S}$ such that \begin{equation*}\det\left(\phi_{j,l}(t_{j,\ell})_{l,\ell=1,\dots,q_j}\right)\neq 0,\end{equation*}where $\phi_{j,l}=g_{j,l}\left(\mathbf{y}^{(d_1',\dots,d_m')}_{(x_0,\theta_0)}\right)$.
    
    Note: These time instants exist due to condition (C3) of Theorem \ref{generalization_ident}, which is verified in Step 2, and \cite[Lemma 1]{Kubik}.
    
    \item[Step 5.] If $\theta_0$ is unknown, for each $j\in\{1,\dots,p\}$, solve the following linear system which, according to Step 4, has a unique solution $\sigma_j=(\sigma_{j,1},\dots,\sigma_{j,q_j})$: \begin{equation}\label{step4proc1}\left(\begin{array}{ccc}
        \phi_{j,1}(t_{j,1}) & \cdots & \phi_{j,q_j}(t_{j,1}) \\
        \vdots & \ddots & \vdots \\
        \phi_{j,1}(t_{j,q_j}) & \cdots & \phi_{j,q_j}(t_{j,q_j})
    \end{array}\right)\left(\begin{array}{c}
        \sigma_{j,1} \\
         \vdots \\
         \sigma_{j,q_j}
    \end{array}\right)=\left(\begin{array}{c}
        \phi_{j,0}(t_{j,1}) \\
        \vdots \\
        \phi_{j,0}(t_{j,q_j})
        \end{array}\right),\end{equation}where $\phi_{j,0}=g_{j,0}\left(\mathbf{y}^{(d_1',\dots,d_m')}_{(x_0,\theta_0)}\right)$.
        
    \item[Step 6.] If $\theta_0$ is unknown, recover $\theta_0$ solving the equation
    \begin{equation*}
    r(\theta_0)=(\sigma_1,\dots,\sigma_p).
    \end{equation*}
    Note: We are taking into account that $r$ is injective and $r(\theta_0)$ is solution of System \eqref{step4proc1}, according to conditions (C1) and (C2) of Theorem \ref{generalization_ident}, which are verified in Step 2.
    
    \item[Step 7.] If $x_0$ is unknown, choose some $\tilde{t}\in\mathcal{S}$ (which can be some $\tilde{t}\in\{t_{1,1},\dots,t_{p,q_p}\}$ if we performed Step 4) and solve for $\tilde{\xi}$ the equation
    \begin{equation*}
    \mathcal{L}_{\theta_0,\{d_1,\dots,d_m\}}(\tilde{\xi})=\mathbf{y}^{(d_1',\dots,d_m')}_{(x_0,\theta_0)}(\tilde{t}).
    \end{equation*}
    Note: We are taking into account that $\mathcal{L}_{\theta,\{d_1,\dots,d_m\}}$ is injective in $\Omega$ and, in particular, in $\Omega_{\theta}$, for any $\theta\in\Theta$, according to Step 1, and $y_{(x_0,\theta_0),i}^{(k)}(\tilde{t})=y_{(\tilde{\xi},\theta_0),i}^{(k)}(0)$, for all $k\in\{0,\dots,d_i\}$, $i\in\{1,\dots,m\}$.
    
    \item[Step 8.] If $x_0$ is unknown and $\tilde{t}\neq 0$, integrate backwards System \eqref{general_sys} from $\tilde{t}$ to $0$ with initial condition $\tilde{\xi}$, knowing $\theta_0$, in order to recover $x_0$.
\end{itemize}
\end{algorithm}

Recall that the time instants required in Step 4 may be anywhere in $\mathcal{S}\subset\mathcal{I}$, and hence may be difficult to find in practice. Nevertheless, in Lemma \eqref{infinitesimal_ident} we give sufficient hypotheses such that System \eqref{general_sys} is identifiable in any semi-open interval $[a,b)\subset\mathcal{S}$; this will imply that we will be able to choose this set of time instants in any of these semi-open intervals. Then, an analogous procedure to this one will be presented in Algorithm \ref{procedure2}.

\begin{remark}\label{range_qi_times}
Notice that, in order to be able to recover $(x_0,\theta_0)$ following the procedure in Step 4 of Algorithm \ref{procedure1}, for each set $\{\phi_{j,1},\dots,\phi_{j,q_j}\}$, $j\in\{1,\dots,p\}$, we need to find $q_j$ different suitable time instants. However, some of these time instants may coincide among different sets of linearly independent functions. Thus, the number of different time instants we need to find is between $\tilde{q}=\max\{q_1,\dots,q_p\}$ and $q=q_1+\dots+q_p$, along maybe with $\tilde{t}=0$, which we can use to recover the initial condition if $0\in\mathcal{S}$ and could be one of the other time instants.

\noindent Recall, moreover, that we do not necessarily need $y_{(x_0,\theta_0)}(t)$, for all $t\in\mathcal{S}$, but it would be sufficient having the values of $y_{(x_0,\theta_0)}$ and its derivatives at the aforementioned different time instants, where the order of the derivatives that we need are the same as for Algorithm \ref{procedure1}.
\end{remark}

\noindent
\textbf{Note:}
    Although this idea of choosing a number of suitable time instants has already been commented in \cite[Remark 3]{Saccomani2003} and \cite{Komatsu2020}, no proof nor more detail is given.
    
\begin{lemma}
\label{infinitesimal_ident}
   Assume the hypotheses of Theorem \ref{generalization_ident} are satisfied for some $\mathcal{S}\subset\mathcal{I}$ and, for any $j\in\{1,\dots,p\}$, $l\in\{1,\dots,q_j\}$ and {$(\xi,\theta)\in\Gamma_{\Theta}$}, the functions $g_{j,l}(\mathbf{y}^{(d_1',\dots,d_m')})$ are analytic on $\mathcal{I}$. Then, System \eqref{general_sys} is identifiable on $\Theta$ in any semi-open interval $[a,b)\subset\mathcal{S}$ with initial conditions {consistent with the family $\{\Omega_{\theta}\}_{\theta\in\Theta}$}{. If $\mathcal{S}$} is connected, it is sufficient for $g_{j,l}(\mathbf{y}^{(d_1',\dots,d_m')})$, $j\in\{1,\dots,p\}$, $l\in\{1,\dots,q_j\}$, to be analytic on $\mathcal{S}$.
\end{lemma}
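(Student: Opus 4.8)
The plan is to reduce the statement to a direct application of Theorem \ref{generalization_ident} on the smaller time set $[a,b)$, for which it suffices to re-verify conditions (C1)--(C3) with $\mathcal{S}$ replaced by $[a,b)$. Conditions (C1) and (C2) transfer immediately: the functional identity \eqref{linear_eq} holds on all of $\mathcal{S}\supset[a,b)$, hence \emph{a fortiori} on $[a,b)$, and the injectivity of $r$ is a property of $r$ alone, independent of the time set. The only condition that genuinely requires an argument is (C3), namely the linear independence of the functions $t\mapsto g_{j,l}(\mathbf{y}^{(d_1',\dots,d_m')}_{(\xi,\theta)}(t))$, $l\in\{1,\dots,q_j\}$, on the smaller set $[a,b)$. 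Note that passing from independence on a larger set to independence on a smaller one is the nontrivial direction (the reverse implication being trivial), and it is exactly here that analyticity enters.

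The core step is as follows. I would fix $(\xi,\theta)\in\Gamma_{\Theta}$ and $j\in\{1,\dots,p\}$ and write $\phi_l(t)=g_{j,l}(\mathbf{y}^{(d_1',\dots,d_m')}_{(\xi,\theta)}(t))$. Suppose, towards a contradiction, that $\{\phi_l\}_{l=1}^{q_j}$ were linearly dependent on $[a,b)$: then there would exist constants $c_1,\dots,c_{q_j}$, not all zero, with $\Phi\coloneqq\sum_{l}c_l\phi_l\equiv 0$ on $[a,b)$, and in particular on the open interval $(a,b)$. The function $\Phi$ is real analytic (on $\mathcal{I}$ in the general case, on $\mathcal{S}$ when $\mathcal{S}$ is connected), and a non-identically-zero real-analytic function on a connected domain has a discrete zero set; since $\Phi$ vanishes on the whole open interval $(a,b)$, the identity theorem forces $\Phi\equiv 0$ on the entire connected domain of analyticity. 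This contradicts the linear independence of $\{\phi_l\}_{l=1}^{q_j}$ on $\mathcal{S}$ guaranteed by (C3) of Theorem \ref{generalization_ident}, so $\{\phi_l\}_{l=1}^{q_j}$ must in fact be linearly independent on $[a,b)$, establishing (C3) for $[a,b)$.

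The distinction between the two analyticity hypotheses is precisely what makes this identity-theorem step valid, and this is the point I would treat most carefully, as it is the main obstacle. When $\mathcal{S}$ is disconnected, vanishing of $\Phi$ on $(a,b)$ only propagates within the single connected component of $\mathcal{S}$ containing $[a,b)$, which is insufficient to contradict independence on all of $\mathcal{S}$; I therefore need $\Phi$ analytic on the connected set $\mathcal{I}\supset\mathcal{S}$, so that vanishing on $(a,b)$ forces $\Phi\equiv 0$ on all of $\mathcal{I}$ and hence on $\mathcal{S}$. When $\mathcal{S}$ is itself connected the whole argument takes place inside $\mathcal{S}$, so analyticity on $\mathcal{S}$ alone suffices, which accounts for the final sentence of the statement. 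To close, since $[a,b)$ is connected and contains the open interval $(a,b)$, it satisfies the hypothesis of Theorem \ref{generalization_ident} on connected components; applying that theorem with $\mathcal{S}$ replaced by $[a,b)$ then yields identifiability of System \eqref{general_sys} on $\Theta$ in $[a,b)$ with initial conditions consistent with the family $\{\Omega_{\theta}\}_{\theta\in\Theta}$, as claimed.
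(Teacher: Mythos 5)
Your proof is correct, and it is organized differently from the paper's, though both rest on the same analytic kernel (the identity theorem for real-analytic functions, which the paper cites as \cite[Theorem 8.5]{Rudin}). The paper does not re-verify (C3) on $[a,b)$; instead it re-runs the proof of Theorem \ref{generalization_ident} inline: from equality of the two outputs on $[a,b)$ it forms the single specific combination $R_j=\sum_{l=1}^{q_j}\bigl(r_{j,l}(\theta_0)-r_{j,l}(\theta_0')\bigr)g_{j,l}(\mathbf{y}^{(d_1',\dots,d_m')}_{(\xi,\theta_0)})$, which vanishes on $[a,b)$, propagates this vanishing to $\mathcal{I}$ (or to $\mathcal{S}$ when $\mathcal{S}$ is connected) by analyticity, and then invokes linear independence on the \emph{original} set $\mathcal{S}$, where (C3) is known, to force $r(\theta_0)=r(\theta_0')$. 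You instead prove the stronger intermediate claim that (C3) itself transfers from $\mathcal{S}$ down to $[a,b)$ --- any vanishing combination $\Phi=\sum_l c_l\phi_l$ on $(a,b)$ must vanish on the whole connected domain of analyticity, contradicting independence on $\mathcal{S}$ --- and then apply Theorem \ref{generalization_ident} as a black box with $\mathcal{S}$ replaced by $[a,b)$ (correctly checking that $[a,b)$ contains an open interval and that (C1)--(C2) are time-set-insensitive). Your route is more modular and buys more: linear independence on $[a,b)$ for \emph{all} combinations is exactly what is needed to locate the time instants $t_{j,1},\dots,t_{j,q_j}$ inside $[a,b)$, and the paper must prove precisely your transfer argument separately later, in the proof of Lemma \ref{recover_infi_lindep}; your version establishes it once and makes the lemma an immediate corollary of the theorem. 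The paper's inline argument is marginally more economical for the lemma in isolation, since it only needs the identity theorem for the one combination $R_j$. Your handling of the connected-versus-disconnected $\mathcal{S}$ dichotomy --- vanishing propagates only within the connected component containing $[a,b)$, so analyticity on $\mathcal{I}$ is needed in general but analyticity on $\mathcal{S}$ suffices when $\mathcal{S}$ is connected --- matches the paper's exactly.
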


\begin{proof}
    Given $\theta_0\in\Theta$, $\xi\in\Omega_{\theta_0}$, let $\theta_0'\in\Theta$ such that
    $$h_{\theta_0}(x(t;\xi,\theta_0))=h_{\theta_0'}(x(t;\xi,\theta_0')),\quad \forall\, t\in[a,b),$$
    i.e.,
    $$y_{(\xi,\theta_0)}(t)=y_{(\xi,\theta_0')}(t),
    \quad \forall\, t\in[a,b).$$
    {Then,}  {$$y^{(k)}_{(\xi,\theta_0),i}(t)=y^{(k)}_{(\xi,\theta_0'),i}(t),\quad \forall\, t\in[a,b),$$} $k\in\{0,\dots,d_i'\},\ i\in\{1,\dots,m\}.$ {Since}, for any $j\in\{1,\dots,p\}$, $l\in\{1,\dots,q_j\}$ and {$(\xi,\theta)\in\Gamma_{\Theta}$}, the functions $g_{j,l}(\mathbf{y}^{(d_1',\dots,d_m')})$ are analytic {in $\mathcal{I}$}, then the function {$$ G_{j,\theta}(\mathbf{y}^{(d_1',\dots,d_m')})\coloneqq\sum_{l=1}^{q_j}r_{j,l}(\theta)g_{j,l}(\mathbf{y}^{(d_1',\dots,d_m')})$$}
    is also analytic in $\mathcal{I}$. Given that $[a,b)\subset\mathcal{S}$, {$$G_{j,\theta}(\mathbf{y}^{(d_1',\dots,d_m')})=g_{j,0}(\mathbf{y}^{(d_1',\dots,d_m')}),$$ in $[a,b)$, for any $\theta\in\Theta$. {Since} $$g_{j,0}(\mathbf{y}^{(d_1',\dots,d_m')}_{(\xi,\theta_0)})=g_{j,0}(\mathbf{y}^{(d_1',\dots,d_m')}_{(\xi,\theta_0')}),$$ in $[a,b)${, we have} $$G_{j,\theta_0}(\mathbf{y}^{(d_1',\dots,d_m')}_{(\xi,\theta_0)})=G_{j,\theta_0'}(\mathbf{y}^{(d_1',\dots,d_m')}_{(\xi,\theta_0')}),$$
    in $[a,b)$.} This implies that, for every $j\in\{1,\dots,p\}$, $$R_j\coloneqq \sum_{l=1}^{q_j}\big(r_{j,l}(\theta_0)-r_{j,l}(\theta_0')\big)g_{j,l}(\mathbf{y}^{(d_1',\dots,d_m')}_{(\xi,\theta_0)})\equiv 0$$
    in $[a,b)$. Due to the analyticity of $g_{j,l}(\mathbf{y}^{(d_1',\dots,d_m')})$, $l\in\{1,\dots,q_j\}$, in $\mathcal{I}$, $R_j$ is also analytic in $\mathcal{I}${. If} $R_j\equiv 0$ in $[a,b)$, {then} $R_j\equiv 0$ in $\mathcal{I}$ (\cite[Theorem 8.5]{Rudin}). Therefore, since $g_{j,l}(\mathbf{y}^{(d_1',\dots,d_m')})$, $l\in\{1,\dots,q_j\}$, are linearly independent in $\mathcal{S}\subset\mathcal{I}$, they are {also} linearly independent in $\mathcal{I}$, and {hence, for} $R_j$ to be 0 in $\mathcal{I}$, for all $j\in\{1,\dots,p\}$, we need $r(\theta_0)=r(\theta_0')$. Since $r$ is {injective}, this {implies} $\theta_0=\theta_0'$.

    {If $\mathcal{S}$} is connected, $g_{j,l}(\mathbf{y}^{(d_1',\dots,d_m')})$, $l\in\{1,\dots,q_j\}$, analytic in $\mathcal{S}$ implies $R_j$ {is analytic} in $\mathcal{S}$, and $R_j\equiv 0$ in $[a,b)\subset\mathcal{S}$ {implies} $R_j\equiv 0$ in $\mathcal{S}$ (if $\mathcal{S}$ is not connected, we can only assure $R_j\equiv 0$ in the connected {component containing} $[a,b)$). {Thus}, we conclude analogously using the linear independence of $g_{j,l}(\mathbf{y}^{(d_1',\dots,d_m')})$, $l\in\{1,\dots,q_j\}$, in $\mathcal{S}$. {Hence,} System \eqref{general_sys} is identifiable on $\Theta$ in any $[a,b)\subset\mathcal{S}$ with initial conditions {consistent with the family $\{\Omega_{\theta}\}_{\theta\in\Theta}$}.
\end{proof}

Taking into account Theorem \ref{generalization_obs} and Lemma \ref{infinitesimal_ident}, we {present a lemma} that shows that we {may} recover $x_0$ and $\theta_0$ knowing $y_{(x_0,\theta_0)}$ only in some $[a,b)\subset\mathcal{I}$.

\begin{lemma}\label{recover_infi_lindep}
    For each $\theta\in\Theta$, let $\Omega_{\theta}\subset\Omega$ be positively invariant with respect to the ODE system in \eqref{general_sys}. Let {$(x_0,\theta_0)\in\Gamma_\Theta$}. Assume we know $y_{(x_0,\theta_0)}$ in some $[a,b)\subset\mathcal{I}$. If the hypotheses of Theorem \ref{generalization_obs} and Lemma \ref{infinitesimal_ident} are satisfied for some $\mathcal{S}\subset\mathcal{I}$ such that $[a,b)\subset\mathcal{S}$, then {System \eqref{general_sys} is jointly observable-identifiable in $[a,b)$, and} we can reconstruct $(x_0,\theta_0)$ univocally using the values of $y_{(x_0,\theta_0)}$ and its derivatives at a finite amount of suitable values {in $[a,b)$}. In particular, the required number of time points is between $\tilde{q}=\max\{q_1,\dots,q_p\}$ and $q=q_1+\dots+q_p$.
\end{lemma}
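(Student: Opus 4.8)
The plan is to reduce the statement to Theorem \ref{det_univocally}, applied with the time set $\mathcal{S}$ replaced by the semi-open interval $[a,b)$. Note first that $[a,b)$ is connected and contains the open interval $(a,b)$, so it is an admissible time set for that theorem; moreover the hypotheses of Theorem \ref{generalization_obs} are assumed directly (and already grant observability on any semi-open interval). Hence the only ingredient that does not transfer verbatim is condition (C3): a priori we only know that the functions $g_{j,l}(\mathbf{y}^{(d_1',\dots,d_m')})$, $l\in\{1,\dots,q_j\}$, are linearly independent on the larger set $\mathcal{S}$, whereas to run the reconstruction on $[a,b)$ we need their linear independence on $[a,b)$ itself.

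The key step, therefore, is to upgrade linear independence from $\mathcal{S}$ to $[a,b)$, and this is precisely where the analyticity hypothesis of Lemma \ref{infinitesimal_ident} is used. I would fix $j$ and suppose that $\sum_{l=1}^{q_j} c_l\, g_{j,l}(\mathbf{y}^{(d_1',\dots,d_m')}_{(x_0,\theta_0)})\equiv 0$ on $[a,b)$. Since each $g_{j,l}(\mathbf{y}^{(d_1',\dots,d_m')})$ is analytic on $\mathcal{I}$, so is this combination; as it vanishes on $[a,b)$, which contains an interval and hence an accumulation point, the identity theorem (\cite[Theorem 8.5]{Rudin}) forces it to vanish on all of $\mathcal{I}$, in particular on $\mathcal{S}$. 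Condition (C3) on $\mathcal{S}$ then yields $c_1=\dots=c_{q_j}=0$, so these functions are linearly independent on $[a,b)$. When $\mathcal{S}$ is connected, analyticity on $\mathcal{S}$ alone suffices, the identity propagating from $[a,b)$ to $\mathcal{S}$.

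With (C3) secured on $[a,b)$, all hypotheses of Theorem \ref{generalization_ident} hold with $\mathcal{S}$ replaced by $[a,b)$: conditions (C1) and (C2) are insensitive to the time set, (C1) holding on $[a,b)\subset\mathcal{S}$ and (C2) being a property of $r$ alone. Applying Theorem \ref{det_univocally} with time set $[a,b)$ then delivers simultaneously that System \eqref{general_sys} is jointly observable-identifiable in $[a,b)$ and that $(x_0,\theta_0)$ is recovered univocally, by the same construction: for each $j$ one selects, via \cite[Lemma 1]{Kubik} and the linear independence just established, instants $\{t_{j,\ell}\}_{\ell=1}^{q_j}\subset[a,b)$ with nonvanishing determinant, solves the attached linear systems to obtain $r_{j,l}(\theta_0)$, inverts $r$ to recover $\theta_0$, and finally inverts the injective map $\mathcal{L}_{f_{\theta_0},h_{\theta_0},\{d_1,\dots,d_m\}}$ at some $\tilde t\in[a,b)$ and integrates the ODE backwards to recover $x_0$.

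For the count of required time points I would invoke Remark \ref{range_qi_times}: since a single instant may be reused across different blocks $j$, the number of distinct points actually needed lies between $\tilde q=\max\{q_1,\dots,q_p\}$ and $q=q_1+\dots+q_p$, and the instant $\tilde t$ used to reconstruct $x_0$ can be chosen among those already selected, so no extra point is required. The main (indeed only) obstacle is the analyticity-based upgrade of linear independence to the thin interval $[a,b)$; once that is in place, the argument is a verbatim replay of Theorem \ref{det_univocally} and introduces no new difficulty.
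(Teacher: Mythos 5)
Your proposal is correct and follows essentially the same route as the paper's proof: the central step in both is to upgrade the linear independence of the functions $g_{j,l}(\mathbf{y}^{(d_1',\dots,d_m')})$ from $\mathcal{S}$ to $[a,b)$ via analyticity and the identity theorem (\cite[Theorem 8.5]{Rudin}), including the same refinement that analyticity on $\mathcal{S}$ suffices when $\mathcal{S}$ is connected, and then to rerun the construction of Theorem \ref{det_univocally} with the time instants chosen in $[a,b)$, invoking Remark \ref{range_qi_times} for the count between $\tilde{q}$ and $q$. The only cosmetic difference is that you phrase the upgrade directly (a vanishing combination on $[a,b)$ forces zero coefficients) while the paper argues by contradiction, which is the same argument.
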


\begin{proof}
    {We recover $\theta_0$ analogously to} the proof of Theorem \ref{det_univocally}. {We only} need to see that, given $j\in\{1,\dots,p\}$, since the functions $g_{j,l}(\mathbf{y}^{(d_1',\dots,d_m')})$, $l\in\{1,\dots,q_j\}$, are linearly independent in some $\mathcal{S}\subset\mathcal{I}$ and analytic in $\mathcal{I}$, they are linearly independent in {$[a,b)\subset\mathcal{S}$. Given} $j\in\{1,\dots,p\}$, assume that $g_{j,l}(\mathbf{y}^{(d_1',\dots,d_m')})$, $l\in\{1,\dots,q_j\}$, are linearly dependent in $[a,b)$, i.e., there exist {$\{a_{j,l}\}_{l=1}^{q_j}\subset\RR$} not all of them null such that,
    $$G_j(t)=\sum_{l=1}^{q_j}a_{j,l}g_{j,l}(\mathbf{y}^{(d_1',\dots,d_m')}(t))=0,\ \forall \ t\in[a,b).$$Since $g_{j,l}(\mathbf{y}^{(d_1',\dots,d_m')})$, $l\in\{1,\dots,q_j\}$, are analytic in $\mathcal{I}$, then so it is $G_j$. This implies that {(see \cite[Theorem 8.5]{Rudin})} $G_j\equiv 0$ in $\mathcal{I}$ and, thus, $g_{j,l}(\mathbf{y}^{(d_1',\dots,d_m')})$, $l\in\{1,\dots,q_j\}$, are linearly dependent in $\mathcal{I}$, and hence in $\mathcal{S}$, which is a contradiction.
    
    Moreover, if $\mathcal{S}$ is connected, it is enough asking for $g_{j,l}(\mathbf{y}^{(d_1',\dots,d_m')})$, $l\in\{1:q_j\}$, analytic in $\mathcal{S}$, since, then, so it is $G_j$. {Therefore}, $G_j\equiv0$ in $[a,b)\subset\mathcal{S}$ connected implies {(see \cite[Theorem 8.5]{Rudin}) that} $G_j\equiv 0$ in $\mathcal{S}$, which leads to the same contra{diction. There}fore, for each $j\in\{1,\dots,p\}$, the functions {$\{\phi_{j,l}\}_{l=1}^{q_j}$}, with $\phi_{j,l}=g_{j,l}(\mathbf{y}^{(d_1',\dots,d_m')}_{(x_0,\theta_0)})$, $l\in\{1,\dots,q_j\}$, are linearly independent in $[a,b)$ and we can conclude analogously {to Theorem} \ref{det_univocally}, along with Remark \ref{range_qi_times}, {choosing} $t_{1,1},$ $\dots,$ $t_{p,q_p}\in[a,b)$.
    
    {On the} other hand, to recover the initial condition, we proceed as in the proof of Theorem \ref{det_univocally}.
    
    {Hence,} we are able to recover $(x_0,\theta_0)$ univocally when knowing $y_{(x_0,\theta_0)}$ in $[a,b)$, using its values and the values of its derivatives at some finite set of time instants in $[a,b)$; concretely, between $\tilde{q}$ and $q$ d{distinct} suitable {time points.}
\end{proof}

We present in Algorithm \ref{procedure2} a method to recover $x_0$ and/or $\theta_0$ knowing $y_{(x_0,\theta_0)}$ only in some $[a,b)\subset\mathcal{I}$, based on the proof of previous Lemma \ref{recover_infi_lindep}. In particular, we will need to find between $\tilde{q}=\max\{q_1,\dots,q_p\}$ and $q=q_1+\dots+q_p$ different time instants in $[a,b)$ when some analyticity properties are satisfied.

\begin{algorithm}\label{procedure2} In a way similar to that of Algorithm \ref{procedure1}, we describe here a slightly different constructive method to recover $x_0\in\Omega$ and/or $\theta_0\in\Theta$ assuming that we know $y_{(x_0,\theta_0)}$ (satisfying System \eqref{general_sys}) in some $[a,b)\subset\mathcal{I}$ and some analyticity hypotheses are fulfilled. Since the procedure is similar to the one in Algorithm \ref{procedure1}, we only state here the steps that change:
\begin{itemize}
    \item[Step 2.] If $\theta_0$ is unknown, find $\Omega_{2,\theta}\subset\Omega$, for any $\theta\in\Theta$, positively invariant with respect to the ODE system given in \eqref{general_sys} such that $x_0\in\Omega_{2,\theta_0}$, and maps $g: \mathcal{D}\rightarrow\mathbb{R}^{q+p}$ and $r: \Theta\rightarrow \mathbb{R}^p$, for some suitable $\mathcal{D}\subset \mathbb{R}^{d_1'+\dots+d_m'+m}$, $d_1',\dots,d_m'\in\mathbb{N}\cup\{0\}$, $q,p\in\mathbb{N}$, such that (C1), (C2) and (C3) of Theorem \ref{generalization_ident} are satisfied, being $\mathcal{S}\subset\mathcal{I}$ in (C2) and (C3) a connected set satisfying $[a,b)\subset\mathcal{S}$, and such that functions $g_{j,l}(\mathbf{y}^{(d_1',\dots,d_m')})$ are analytic in $\mathcal{S}$, for any $j\in\{1,\dots,p\}$, $l\in\{1,\dots,q_j\}$, {$(\xi,\theta)\in\Gamma_{2,\Theta}=\bigcup_{\theta\in\Theta}\Omega_{2,\theta}\times\{\theta\}$}.
    
    Note: According to Lemma \ref{infinitesimal_ident}, this ensures that System \eqref{general_sys} is identifiable on $\Theta$ in any semi-open interval $[a,b)\subset\mathcal{S}$ with initial conditions {consistent with the family $\{\Omega_{2,\theta}\}_{\theta\in\Theta}$}.
    
    \item[Step 4.] Same as in Algorithm \ref{procedure1} but such that, for each $j\in\{1,\dots,p\}$, $t_{j,1},\dots,t_{j,q_j}\in[a,b)$.
    
    Note: These time instants exist due to condition (C3) of Theorem \ref{generalization_ident} (which is verified in Step 2), the analyticity condition required also in Step 2, and \cite[Lemma 1]{Kubik}.
    
    \item[Step 7.] Same as in Algorithm \ref{procedure1}, but such that $\tilde{t}\in[a,b)$.
\end{itemize}
\end{algorithm}

Notice again that we need to find in $[a,b)$ the sets of time instants required in Step 4, and hence again may be difficult to find in practice. However, we can reduce this quantity of time instants to $1$ time if we use higher derivatives of $y$, as will be shown next.

\begin{remark}\label{ydiscret2}
    Recall that, as in Remark \ref{range_qi_times}, we may not need to know $y_{(x_0,\theta_0)}$ in some interval $[a,b)$, but only the values of this function and its derivatives at the time instants indicated in Step 4 and Step 7 in Algorithm \ref{procedure2}, where the needed order of the derivatives is given by the same algorithm.
\end{remark}

\subsection{Using higher-order derivatives}

In this section, we present a methodology which uses higher derivatives of $y$ and may be more convenient in some cases. Some classical differential algebra methodologies use higher-order derivatives and, once they obtain the expressions of these derivatives, they study the identifiability of the system from a differential algebra viewpoint (recall, for example, \cite{Gevers2016, Ljung1994, Ovchinnikov2021}). In the methodology presented in Lemma \ref{recover_infi_wron} and Algorithm \ref{procedure3} we still determine the identifiability of System \eqref{general_sys} studying the linear independence, which in several cases will be easier than what is performed in the other methodologies. The use we make of higher-order derivatives allows to reduce to one the number of time instants that we need to find in $[a,b)\subset\mathcal{I}$ to recover the unknown parameters, once we know the system is identifiable. Therefore, notice that using these higher-order derivatives is an alternative to recover the unknowns using less time instants but is not necessary for proving the identifiability of the system, as it is in the aforementioned works.

\begin{lemma}\label{recover_infi_wron}
   For each $\theta\in\Theta$, let $\Omega_{\theta}\subset\Omega$ be positively invariant with respect to the ODE system in \eqref{general_sys}. Let {$(x_0,\theta_0)\in\Gamma_{\Theta}$}. Assume we know $y_{(x_0,\theta_0)}$ in some semi-open interval $[a,b)\subset\mathcal{I}$, and the hypotheses of Theorem \ref{generalization_obs} and Lemma \ref{infinitesimal_ident} are satisfied for some $\mathcal{S}\subset\mathcal{I}$ such that $[a,b)\subset\mathcal{S}$. If we further assume that $h_{\theta,i}\in\mathcal{C}^{d_i'+\tilde{q}-1}(\Omega;\mathbb{R})$, $i\in\{1,\dots,m\}$, $f_{\theta}\in\mathcal{C}^{d'+\tilde{q}-2}(\Omega;\mathbb{R}^n)$, for any $\theta\in\Theta$, being $\tilde{q}=\max\{q_1,\dots,q_p\}$, then System \eqref{general_sys} is jointly observable-identifiable and we can reconstruct $(x_0,\theta_0)$ univocally using the value of $y_{(x_0,\theta_0)}$ and its derivatives at only one time in $[a,b)$; in particular, this time can be almost any $t\in[a,b)$.
\end{lemma}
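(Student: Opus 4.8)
The plan is to follow the same two-stage strategy as in Lemma \ref{recover_infi_lindep}—recover $\theta_0$ first and then $x_0$—but to replace the evaluation of the functions $\phi_{j,l}$ at several distinct time instants by the evaluation of $\phi_{j,l}$ together with its derivatives at a single time. The extra smoothness hypotheses $h_{\theta,i}\in\mathcal{C}^{d_i'+\tilde{q}-1}$ and $f_{\theta}\in\mathcal{C}^{d'+\tilde{q}-2}$ are precisely what is needed for $y^{(k)}_{(x_0,\theta_0),i}$ to exist up to order $d_i'+\tilde{q}-1$, so that each $\phi_{j,l}(t)=g_{j,l}(\mathbf{y}^{(d_1',\dots,d_m')}_{(x_0,\theta_0)}(t))$ may be differentiated $\tilde{q}-1$ times.

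First I would recall from the proof of Lemma \ref{recover_infi_lindep} that, under the analyticity assumption, linear independence of the functions $g_{j,l}(\mathbf{y}^{(d_1',\dots,d_m')})$ on $\mathcal{S}$ transfers to linear independence on $[a,b)$. Then, since relation \eqref{linear_eq} of (C1) holds on $\mathcal{S}$ (and in particular on $[a,b)$) with the \emph{constant} coefficients $r_{j,l}(\theta_0)$, I would differentiate it $k$ times in $t$, for $k\in\{0,\dots,q_j-1\}$ (this is where the additional regularity is consumed), to obtain
\begin{equation*}
\phi_{j,0}^{(k)}(t)=\sum_{l=1}^{q_j} r_{j,l}(\theta_0)\,\phi_{j,l}^{(k)}(t),\qquad t\in[a,b),\ k\in\{0,\dots,q_j-1\}.
\end{equation*}
Evaluating at a single $\tilde{t}\in[a,b)$ yields, for each $j$, a square $q_j\times q_j$ linear system in the unknowns $\sigma_{j,l}$ whose coefficient matrix is the Wronskian matrix $\big(\phi_{j,l}^{(k-1)}(\tilde{t})\big)_{k,l=1,\dots,q_j}$.

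The crux is to argue that this Wronskian matrix is invertible for almost every $\tilde{t}$. Here I would invoke the classical fact that analytic functions are linearly independent if and only if their Wronskian does not vanish identically; analyticity is essential, since for merely smooth functions linear independence need not force the Wronskian to be nonzero. As $\phi_{j,1},\dots,\phi_{j,q_j}$ are analytic and linearly independent on $[a,b)$, their Wronskian $W_j$ is analytic and not identically zero; being analytic and nonzero, its zero set in $[a,b)$ is discrete (a standard consequence of analyticity, cf. \cite[Theorem 8.5]{Rudin}), so $W_j(\tilde{t})\neq 0$ for almost every $\tilde{t}\in[a,b)$. Choosing a single $\tilde{t}$ simultaneously for all $j$ is possible because the finite union $\bigcup_j\{W_j=0\}$ is still discrete; at such $\tilde{t}$ the Wronskian systems admit unique solutions $\sigma_{j,l}=r_{j,l}(\theta_0)$, and injectivity of $r$ gives $\theta_0=r^{-1}(\sigma_{1,1},\dots,\sigma_{p,q_p})$. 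This discreteness of the exceptional set is exactly what yields the ``almost any $t\in[a,b)$'' in the statement.

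Finally, I would recover $x_0$ exactly as in Theorem \ref{det_univocally}: using injectivity of $\mathcal{L}_{f_{\theta_0},h_{\theta_0},\{d_1,\dots,d_m\}}$ on $\Omega$ to identify $\tilde{\xi}=x(\tilde{t};x_0,\theta_0)$ from $\mathbf{y}^{(d_1,\dots,d_m)}_{(x_0,\theta_0)}(\tilde{t})$, and then integrating the ODE backwards from $\tilde{t}$ to $0$ to obtain $x_0$ (or taking $\tilde{t}=0$ if $0\in[a,b)$). Joint observability-identifiability in $[a,b)$ then follows, since the whole pair $(x_0,\theta_0)$ has been reconstructed from the output on $[a,b)$. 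I expect the main obstacle to be the Wronskian step: one must carefully use analyticity both to turn linear independence into nonvanishing of $W_j$ and to control its zero set, while keeping track that the number of derivatives required matches the imposed regularity $\mathcal{C}^{d_i'+\tilde{q}-1}$ and $\mathcal{C}^{d'+\tilde{q}-2}$.
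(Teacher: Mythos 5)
Your proposal is correct and takes essentially the same route as the paper's proof: differentiate the relation \eqref{linear_eq} $q_j-1$ times with constant coefficients $r_{j,l}(\theta_0)$, use the fact (the paper cites \cite{Bocher1900}) that linearly independent analytic functions have a Wronskian $W_j\not\equiv 0$, note that the zeros of each analytic $W_j$ are isolated so a single $\tilde{t}\in[a,b)$ works simultaneously for all $j\in\{1,\dots,p\}$, solve the resulting Wronskian systems, and invert $r$. The recovery of $x_0$ via injectivity of $\mathcal{L}_{f_{\theta_0},h_{\theta_0},\{d_1,\dots,d_m\}}$ and backward integration also matches the paper, so there is nothing to fix.
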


\begin{proof}
    Let us take $\phi_{j,l}=g_{j,l}(\mathbf{y}^{(d_1',\dots,d_m')}_{(x_0,\theta_0)})$ in $[a,b)$, for all $l\in\{0,\dots,q_j\},\ j\in\{1,\dots,p\}$. Since \begin{equation*}
    \phi_{j,0}(t)=\sum_{l=1}^{q_j} r_{j,l}(\theta_0) \phi_{j,l}(t),\quad \forall\, t\in[a,b),\end{equation*} where $r(\theta_0)$ does not depend on time, we differentiate each of these equations, for $j\in\{1,\dots,p\}$, $q_j-1$ times such that \begin{equation*}
    \dfrac{\mathrm{d}^k \phi_{j,0}}{\mathrm{d} t^k}(t)=\sum_{l=1}^{q_j} r_{j,l}(\theta_0) \dfrac{\mathrm{d}^k \phi_{j,l}}{\mathrm{d} t^k}(t), \quad \forall \ t\in[a,b),\ k\in\{0,\dots,q_j-1\},\end{equation*}
By the proof of \cite[Lemma 3]{Kubik}, we know that $\phi_{j,1},\dots,\phi_{j,q_j}$ are linearly independent in $[a,b)$, and hence there exists some $t_j\in[a,b)$ (see \cite{Bocher1900}) such that \begin{equation*}
W_{j}(t_j)=\det\left(\left(\dfrac{\mathrm{d}^{k} \phi_{j,l}}{\mathrm{d} t^{k}}(t_j)\right)_{\begin{subarray}{l} l=1,\dots,q_j \\ k=0,\dots,q_j-1\end{subarray}}\right)\neq 0,\end{equation*} where $W_{j}$ is the Wro\'nskian of $\phi_{j,1},\dots,\phi_{j,q_j}$, i.e., there exists a unique solution $\sigma_{j}$ to
    \begin{equation*}
    \left(\begin{array}{ccc}
        \phi_{j,1}(t_j) & \cdots & \phi_{j,q_j}(t_j) \\[.5em]
        \dfrac{\mathrm{d} \phi_{j,1}}{\mathrm{d} t}(t_j) & \cdots & \dfrac{\mathrm{d} \phi_{j,q_j}}{\mathrm{d} t}(t_j) \\[.5em]
        \vdots & \ddots & \vdots \\[.5em]
        \dfrac{\mathrm{d}^{q_j-1} \phi_{j,1}}{\mathrm{d} t^{q_j-1}}(t_j) & \cdots & \dfrac{\mathrm{d}^{q_j-1} \phi_{j,q_j}}{\mathrm{d} t^{q_j-1}}(t_j)
    \end{array}\right)\left(\begin{array}{c}
        \sigma_{j,1} \\
        \sigma_{j,2} \\
        \vdots \\
        \sigma_{j,q_j}
    \end{array}\right)=\left(\begin{array}{c}
        \phi_{j,0}(t_j) \\[.5em]
        \dfrac{\mathrm{d} \phi_{j,0}}{\mathrm{d} t}(t_j) \\[.5em]
        \vdots \\[.5em]
        \dfrac{\mathrm{d}^{q_j-1}\phi_{j,0}}{\mathrm{d} t^{q_j-1}}(t_j)
    \end{array}\right).\end{equation*}

    Moreover, $W_j$ is also analytic, and therefore it can only vanish at isolated points \cite[Theorem 8.5]{Rudin}, i.e., the previous equation is fulfilled for almost every $t_j\in[a,b)$. Furthermore, since each $W_j$, $j\in\{1,\dots,p\}$, only vanishes at a countable set of times, then all of them are non-null simultaneously for almost every $t\in[a,b)$. Hence, we can choose almost any time $\tilde{t}\in[a,b)$ such that the previous system has a unique solution considering $t_j=\tilde{t}$, for every $j\in\{1,\dots,p\}$.
    
    In a way similar to that of the proof of \cite[Theorem 3]{Kubik}, we may recover our original parameter vector $\theta_0$ as
    \begin{equation*}
    \theta_0 =r^{-1}(\sigma_{1,1},\dots,\sigma_{p,q_p}).\end{equation*}
    Having recovered $\theta_0$, we can recover $x_0$ in the same way as exposed in the proof of \cite[Lemma 3]{Kubik}.

    Hence, we are able to recover $(x_0,\theta_0)$ univocally when knowing $y_{(x_0,\theta_0)}(t)$, $t\in[a,b)$, using its value and the value of its derivatives at one time in $[a,b)$, which can be almost any $t\in[a,b)$.
\end{proof}

Therefore, if we ask for higher regularity of the components of $y_{(\xi,\theta)}$, {$(\xi,\theta)\in\Gamma_{\Theta}$}, than required in \cite[Lemma 3]{Kubik}, we can recover $x_0$ and $\theta_0$ choosing only one time in $[a,b)$, instead of a number of distinct time instants between $\tilde{q}=\max\{q_1,\dots,q_p\}\geq 1$ and $q=q_1+\dots+q_p\geq p$ (see Remark \ref{range_qi_times}). We present next the associated algorithm.

\begin{algorithm}\label{procedure3}
In a way similar to those of Algorithm \ref{procedure1} and Algorithm \ref{procedure2}, we describe here a different constructive method to recover $x_0\in\Omega$ and/or $\theta_0\in\Theta$ assuming that we know $y_{(x_0,\theta_0)}$ (satisfying System \eqref{general_sys}) in some $[a,b)\subset\mathcal{I}$, some analyticity properties are fulfilled and $y_{(x_0,\theta_0)}$ is smooth enough. This method is presented in the proof of Lemma \ref{recover_infi_wron}, and it requires to choose only one time in $[a,b)$. However, it needs the use of higher derivatives of $y_{(x_0,\theta_0)}$, which are not necessary in the constructive methods explained in Algorithm \ref{procedure1} and Algorithm \ref{procedure2}. We only state here the steps that are different from those in Algorithms \ref{procedure1} and \ref{procedure2}:
\begin{itemize}
    \item[Step 2.] Same as in Step 2 of Algorithm \ref{procedure2}, along with $y_{(\xi,\theta),i}\in\mathcal{C}^{d_i'+\tilde{q}-1}$, for every $(\xi,\theta)\in\Gamma_{2,\Theta}$ and each $i\in\{1,\dots,m\}$, with $\tilde{q}=\max\{q_1,\dots,q_p\}$.
    
    \item[Step 4.] If $\theta_0$ is unknown, for each $j\in\{1,\dots,p\}$, differentiate with respect to time $q_j-1$ times the equation \begin{equation*}\phi_{j,0}=\sum_{l=1}^{q_j} r_{j,l}(\theta_0) \phi_{j,l},\end{equation*}
    where $\phi_{j,\ell}=g_{j,\ell}\left(\mathbf{y}^{(d_1',\dots,d_m')}_{(x_0,\theta_0)}\right)$, $\ell\in\{0,l\}$.
    
    Note: We can differentiate due to the analyticity condition on functions $g_{j,l}$ and the regularity conditions on $y$ required in Step 2.
    
    \item[Step 5.] If $\theta_0$ is unknown, choose a time $\tilde{t}\in[a,b)$ such that \begin{equation}\label{step5proc3}W_j(\tilde{t})=\det\left(\left(\dfrac{\mathrm{d}^{k} \phi_{j,l}}{\mathrm{d} t^{k}}(\tilde{t})\right)_{\begin{subarray}{l} l=1,\dots,q_j \\ k=0,\dots,q_j-1\end{subarray}}\right)\neq0,\quad \forall\, j\in\{1,\dots,p\},\end{equation}where $W_j$ is the Wro\'nskian of $\{\phi_{j,1},\dots,\phi_{j,q_j}\}$.
    
    Note: This time can be almost any time in $[a,b)$ due to condition (C3) of Theorem \ref{generalization_ident}, which is verified in Step 2, and the analyticity condition required in Step 2.
    
    \item[Step 6.] If $\theta_0$ is unknown, for each $j\in\{1,\dots,p\}$, solve the following linear system which, due to Step 5, has a unique solution $\sigma_j=(\sigma_{j,1},\dots,\sigma_{j,q_j})$: \begin{equation}\label{step6proc3}\left(\begin{array}{ccc}
        \phi_{j,1}(\tilde{t}) & \cdots & \phi_{j,q_j}(\tilde{t}) \\[.5em]
        \dfrac{\mathrm{d} \phi_{j,1}}{\mathrm{d} t}(\tilde{t}) & \cdots & \dfrac{\mathrm{d} \phi_{j,q_j}}{\mathrm{d} t}(\tilde{t}) \\[.5em]
        \vdots & \ddots & \vdots \\[.5em]
        \dfrac{\mathrm{d}^{q_j-1} \phi_{j,1}}{\mathrm{d} t^{q_j-1}}(\tilde{t}) & \cdots & \dfrac{\mathrm{d}^{q_j-1} \phi_{j,q_j}}{\mathrm{d} t^{q_j-1}}(\tilde{t})
    \end{array}\right)\left(\begin{array}{c}
        \sigma_{j,1} \\
        \sigma_{j,2} \\
        \vdots \\
        \sigma_{j,q_j}
    \end{array}\right)=\left(\begin{array}{c}
        \phi_{j,0}(\tilde{t}) \\[.5em]
        \dfrac{\mathrm{d} \phi_{j,0}}{\mathrm{d} t}(\tilde{t}) \\[.5em]
        \vdots \\[.5em]
        \dfrac{\mathrm{d}^{q_j-1}\phi_{j,0}}{\mathrm{d} t^{q_j-1}}(\tilde{t})
    \end{array}\right).\end{equation}
    
    \item[Step 7.] Same as in Step 6 of Algorithm \ref{procedure1}, but considering that $r(\theta_0)$ is solution of System \eqref{step6proc3}.
    
    \item[Step 8.] Same as in Step 7 of Algorithm \ref{procedure2}, but now the time we choose can be the same as $\tilde{t}$ in Steps 5 and 6 of this algorithm.
    
    \item[Step 9.] Same as in Step 8 of Algorithm \ref{procedure1}.
\end{itemize}

\end{algorithm}

\begin{remark}
Notice that, as in Remark \ref{range_qi_times} and Remark \ref{ydiscret2}, we may not need to know $y_{(x_0,\theta_0)}$ in some interval $[a,b)$, but only its value and the values of its derivatives (each component $y_{(x_0,\theta_0),i}$ up to the $\max\{d_i,d_i'+\tilde{q}-1\}-$th derivative) at one time such that Step 5 in Algorithm \ref{procedure3} is satisfied. Moreover, if $0\in\mathcal{S}$ and $y_{(x_0,\theta_0)}$ and its derivatives are known at $t=0$, then we could choose $\tilde{t}=0$ if Step 5 in Algorithm \ref{procedure3} is satisfied and not need to integrate backwards.
\end{remark}

\begin{remark}
    In the algorithms we present here, one can choose a suitable set of time instants $\{t_{j,1},\dots,t_{j,q_j}\}$, $j\in\{1,\dots,p\}$, for Algorithms \ref{procedure1} and \ref{procedure2}, and almost any time $\tilde{t}$ for Algorithm \ref{procedure3} that fulfill the required properties to reconstruct univocally the parameters and initial condition. Then, one can integrate the system up to the current time $t$ to reconstruct the current state. {Reconstructing parameters and initial condition or parameters and current state are equivalent as already mentioned, provided the measurements to be error free.} However, when facing real data corrupted by some noise, the estimation might depend on the choice of the time instants $\{t_{j,\ell}\}_{\ell=1}^{q_j}$, for $j\in\{1,\dots,p\}$, or the time $\tilde{t}$, and is tainted by some error. Then, integrating the system backwards up to time $0$ and forward up to time $t$ might propagate and amplify the estimation error on the initial condition and the current state, respectively. A way to improve this estimation is to use a filter in the spirit of an observer (see e.g. \cite{Kalman}). In this work, as the objective is to analyze the identifiability and observability properties, we do not consider here the robustness issue with respect to corrupted data. This will be the matter of a future work.
\end{remark}

Up to now, we have provided some hypotheses that ensure a system to be observable, identifiable or jointly observable-identifiable, and some constructive algorithms to recover the initial condition and/or the parameter vector. Besides, if we have some analyticity properties, we can use almost any time $t\in\mathcal{I}$ to recover these unknowns. Epidemiological models based on autonomous ODEs are typically analytic systems. In particular, if $f$ is a combination of linear and bilinear terms of the state variables, then $f$ is clearly analytic in all $\mathbb{R}^n$. Due to the Cauchy-Kovalevskaya theorem for ODEs (see \cite{Kepley2021}), each initial condition provides a unique locally analytic solution. In an autonomous system, any point of a solution can be considered as the initial condition of the same solution, and hence this solution is analytic everywhere. Moreover, in the example presented in Section \ref{motivation}, the output is $y=kI$, which, since $I$ is analytic, is also analytic.

\begin{remark}
    We can apply the proposed algorithms to some systems with piece-wise constant parameters. If the parameter vector is $\theta_s$ in an interval $[t_s,t_{s+1})$, $s\in\{0,1,2,\dots\}$, and we know $t_s$, for all $s$, then we can check the different hypotheses exposed in Algorithms \ref{procedure1}, \ref{procedure2} and \ref{procedure3} considering for each $s$ that we know $y_{(x(t_s),\theta_s)}$ in $[t_s,t_{s+1})$, and apply them respectively if the hypotheses are fulfilled.
\end{remark}

Let us now illustrate all the presented theory through the classical SIRS model.

\section{Application to the SIRS model}\label{obsnident}

In this section, we study the observability, identifiability and joined observability-identifiability of the SIRS model \eqref{motiSIRSkI}, previously presented in Section \ref{motivation}, under the observation of an unknown fraction $k\in(0,1]$ of individuals. The associated ODE system is conservative, i.e., $\dot{S}+\dot{I}+\dot{R}=0$; in particular, $S$ is the fraction of susceptible individuals, $I$ is the fraction of infectious individuals and $R$ is the fraction of recovered individuals, and then $S+I+R=1$, for all $t\geq 0$ (see, for example, \cite[Chapter 10.4]{CastilloBrauer}). Then, we consider the following reduced system: \begin{equation}\label{SIRSkI}\left\{\begin{array}{ccl}
    \dot{S}&=&-\beta SI+\mu (1-S-I),\\
    \dot{I}&=&\beta SI-\gamma I, \\
    y&=& kI,
\end{array} \quad \forall\, t\geq 0,\right.\end{equation}for some initial condition $(S(0),I(0))^{\mathrm{T}}=(S_0,I_0)^{\mathrm{T}}\in\Omega=\{(\xi_1,\xi_2)^{\mathrm{T}}\in[0,1]^2 \ : \ \xi_1+\xi_2\leq 1\}$. We assume that the four parameters, $k$, $\beta$, $\gamma$ and $\mu$, and/or the initial condition are unknown.

In this system, $\beta>0$ (days$^{-1}$) is the disease contact rate, $\gamma>0$ (days$^{-1}$) is the transition rate from compartment $I$ to compartment $R$, and $\mu>0$ (days$^{-1}$) is the transition rate from compartment $R$ to compartment $S$. The following results are well-known (\cite[Chapter 10.4]{CastilloBrauer}, \cite{Khalil}):
\begin{itemize}
    \item The system of ODEs of System \eqref{SIRSkI} with initial condition in $\Omega$ has a unique solution.
    \item The set $\Omega$ is positively invariant with respect to the system of ODEs of System \eqref{SIRSkI}.
\item The basic reproduction number is defined as $\mathcal{R}_0=\beta/\gamma$. It is an approximation of the number of cases that one infected person generates on average over the course of its infectious period, in an uninfected population and without special control measures.
\item The (unique) disease-free equilibrium (DFE), which is $P_{\mathrm{f}}=(1,0)^{\mathrm{T}},$ is globally asymptotically stable when $\mathcal{R}_0<1$ and unstable when $\mathcal{R}_0>1$; in particular, in this case, it is a saddle point whose stable manifold is $I\equiv0$.
\item When $\mathcal{R}_0>1$, there exists an endemic equilibrium (EE) in $\Omega$ given by $P_{\mathrm{e}}(\theta)=\left(\dfrac{1}{\mathcal{R}_0},\right.$ $\left.\mu\dfrac{1-1/\mathcal{R}_0}{\gamma+\mu}\right)^{\mathrm{T}},$ which is globally asymptotically stable out of $I\equiv 0$.
\end{itemize}

Having done this quick wrap-up on the main characteristics of the SIRS model, we may start to study if it is observable, identifiable or jointly observable-identifiable. Notice that, in particular, the solutions of this system are analytic and, hence, so it is the output.

\subsection{Identifiability and observability}\label{obsnidentsubsec}

Notice that the solutions and observations of System \eqref{SIRSkI} are analytic in $\mathcal{I}$. We will see that the procedure described in Algorithm \ref{procedure2} is appropriate for this case, and gives more general results than Algorithm \ref{procedure1}. Hence, in the following, we are going to focus on Algorithm \ref{procedure2} considering we know $y_{(x_0,\theta_0)}$ in $[a,b)\subset\mathcal{I}$. In particular, to check the observability, identifiability or joined observability-identifiability of System \eqref{SIRSkI}, we are going to follow Steps 1 and 2 and see if the required conditions can be satisfied. First of all, notice that the number of state variables of  System \eqref{SIRSkI} is $n=2$, the number of unknown parameters is $b=4$ and the number of observations is $m=1$. Let $\xi=(S_0,I_0)^{\mathrm{T}}\in\Omega$, $\theta=(k,\beta,\gamma,\mu)^{\mathrm{T}}\in\Theta=(0,1]\times(0,\infty)^3$, $x=(S,I)^{\mathrm{T}}$. Then, System \eqref{SIRSkI} can be consider as a system of the form given in \eqref{general_sys}, with \begin{equation*}f(x,\theta)=\left(\begin{array}{c}
    -\beta x_1x_2+\mu(1-x_1-x_2)  \\
    \beta x_1x_2-\gamma x_2
\end{array}\right), \quad h(x,\theta)=kx_2.\end{equation*}

In the following, we will find different suitable sets $\Omega_1$ and $\Omega_{2,\theta}$, for any $\theta\in\Theta$, for the above mentioned Steps 1 and 2, respectively. We will do it constructively.

\noindent\textbf{Step 1.}
In this case, since $m=1$, we have to find $d_1\in\mathbb{N}_0$ such that, for any $\theta\in\Theta$, the following function is injective in some suitable set $\Omega_1\subset\Omega$ positively invariant w.r.t. the ODE system given in \eqref{SIRSkI}:
\begin{equation*}\mathcal{L}_{\theta,d_1}: \xi\mapsto \left(y_{(\xi,\theta)}^{(0)}(0), \dots, y_{(\xi,\theta)}^{(d_1)}(0) \right).\end{equation*}

For any $(\xi,\theta)\in\Omega\times\Theta$, letting $x(t;\xi,\theta)=(S(t;\xi,\theta),I(t;\xi,\theta))^{\mathrm{T}}$ be the solution to System \eqref{SIRSkI} with initial condition $\xi$ and parameter vector $\theta$, we have \begin{equation*}y^{(0)}_{(\xi,\theta)}(t) = h_{\theta}(x(t;\xi,\theta)) = kI(t;\xi,\theta),\quad \forall \, t\geq 0.\end{equation*}

Notice that $\mathcal{L}_{\theta,0}(\xi)=y^{(0)}_{(\xi,\theta)}(0)=k\xi_2$ is injective in a set $\Omega_1\subset\Omega$ if, and only if, there exist $\Xi\subset[0,1]$ and some function $G:\Xi\rightarrow [0,1]$ such that $\Omega_1=\{(G(\xi_2),\xi_2)\in\Omega \ : \ \xi_2\in\Xi\}$. A set $\Omega_1$ that has this form is positively invariant with respect to the system of ODEs of System \eqref{SIRSkI} if, and only if, for any initial condition $\xi\in\Omega_1$, $S(t;\xi,\theta)=G(I(t;\xi,\theta))$, for all $t\geq 0$. Two examples of such orbits are $(S,I)\equiv P_{\mathrm{f}}$ and, whenever $\mathcal{R}_0>1$, $(S,I)\equiv P_{\mathrm{e}}(\theta)$.

These type of sets are very restrictive and not very useful in applications. Therefore, let us consider $d_1=1$ instead of $d_1=0$. We will show that this is a more convenient choice since the positively invariant set w.r.t. the ODE system given in \eqref{SIRSkI} that we will derive from it is less restrictive.

Differentiating $y_{(\xi,\theta)}^{(0)}(t)$ one time leads to the following equation:
\begin{equation*}\begin{array}{ll}
y^{(1)}_{(\xi,\theta)}(t)&=\mathrm{D}h_{\theta}(x(t;\xi,\theta))f_{\theta}(x(t;\xi,\theta))=(0,k)f_{\theta}(x(t;\xi,\theta))\\ &=k(\beta S(t;\xi,\theta)-\gamma)I(t;\xi,\theta)(t)=(\beta S(t;\xi,\theta)-\gamma)y^{(0)}_{(\xi,\theta)}(t),\quad \forall \, t\geq 0.
\end{array}\end{equation*}

Let us consider $\Omega_1=\{(\xi_1,\xi_2)^{\mathrm{T}}\in\Omega \ : \ \xi_2\neq 0\}=\{(\xi_1,\xi_2)^{\mathrm{T}}\in[0,1)\times(0,1] \ : \ \xi_1+\xi_2\leq 1\}$.

\begin{lemma}\label{lemmaLtheta1}For any $\theta=(k,\beta,\gamma,\mu)^{\mathrm{T}}\in\Theta$, the following map is injective: \begin{equation}\label{Ltheta1}\mathcal{L}_{\theta,1}: 
    \Omega_1  \longrightarrow  \mathbb{R}^2 :
      \xi  \longmapsto  \left(y^{(0)}_{(\xi,\theta)}(0),y^{(1)}_{(\xi,\theta)}(0)\right).
\end{equation}
\end{lemma}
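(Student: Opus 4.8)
The plan is to write the map $\mathcal{L}_{\theta,1}$ explicitly in terms of the coordinates $\xi=(\xi_1,\xi_2)^{\mathrm{T}}$ and then recover these coordinates one at a time. Evaluating at $t=0$ the two expressions computed just above the statement, and recalling that $x(0;\xi,\theta)=\xi$, I obtain
\begin{equation*}
\mathcal{L}_{\theta,1}(\xi)=\left(k\xi_2,\ (\beta\xi_1-\gamma)\,k\xi_2\right).
\end{equation*}
Everything then reduces to inverting this explicit formula.

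Next I would take two points $\xi,\xi'\in\Omega_1$ satisfying $\mathcal{L}_{\theta,1}(\xi)=\mathcal{L}_{\theta,1}(\xi')$ and unwind the two scalar equalities in turn. From the first components, $k\xi_2=k\xi_2'$, and since $k\in(0,1]$ is strictly positive this yields $\xi_2=\xi_2'$. From the second components, $(\beta\xi_1-\gamma)\,k\xi_2=(\beta\xi_1'-\gamma)\,k\xi_2'$; here I would invoke that we are working in $\Omega_1$, where $\xi_2\neq 0$, together with $k>0$, to divide through by the common nonzero factor $k\xi_2$. This reduces the equation to $\beta\xi_1=\beta\xi_1'$, and since $\beta>0$ it gives $\xi_1=\xi_1'$. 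Hence $\xi=\xi'$ and the map is injective.

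The only delicate point—and the one the whole argument hinges on—is the restriction to $\Omega_1$. If $\xi_2=0$ were allowed, both components of $\mathcal{L}_{\theta,1}(\xi)$ would vanish irrespective of $\xi_1$, so every point of the disease-free axis $\{\xi_2=0\}$ would map to $(0,0)^{\mathrm{T}}$ and injectivity would fail; this is exactly why $\Omega_1$ is defined to exclude $\xi_2=0$. I do not expect any genuine computational obstacle: once the positivity of $k$ and $\beta$ is used, each coordinate can be peeled off directly, so the proof is essentially the elementary verification that the above explicit map is one-to-one on the chosen domain.
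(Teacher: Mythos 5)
Your proposal is correct and follows essentially the same route as the paper: the paper's proof likewise evaluates $y^{(0)}_{(\xi,\theta)}(0)=k\xi_2$ and $y^{(1)}_{(\xi,\theta)}(0)=k(\beta\xi_1-\gamma)\xi_2$ and declares injectivity on $\Omega_1$ straightforward, which is exactly the coordinate-peeling argument (using $k>0$, $\beta>0$, and $\xi_2\neq 0$) that you spell out. Your closing observation about failure of injectivity when $\xi_2=0$ is also made in the paper, in the remark immediately following the lemma.
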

\begin{proof}
    The proof is straightforward, since, given $\theta=(k,\beta,\gamma,\mu)\in\Theta$, $\xi\in\Omega_1$, \begin{equation*}y^{(0)}_{(\xi,\theta)}(0)=k\xi_2,\quad y^{(1)}_{(\xi,\theta)}(0)=k(\beta\xi_1-\gamma)\xi_2.\end{equation*}Therefore, it is easy to see that \eqref{Ltheta1} is injective.
\end{proof}
Notice that, if we change $\Omega_1$ by $\Omega$ in Lemma \ref{lemmaLtheta1}, then the mapping $\mathcal{L}_{\theta,1}$ is not injective. 

To continue with Step 1 of Algorithm \ref{procedure2}, we point out that $\Omega_1$ is positively invariant with respect to the system of ODEs of System \eqref{SIRSkI}. Notice that $\Omega_1$ is the same set as $\Omega$ except for the manifold $\{(\xi_1,0):\xi_1\in[0,1]\}$, which is also positively invariant w.r.t. the ODE system given in \eqref{SIRSkI} (recall that this is the stable manifold associated to $P_{\mathrm{f}}$ when $\mathcal{R}_0>1$). Then, due to the uniqueness of solutions, $\Omega_1$ is also positively invariant w.r.t. the ODE system given in \eqref{SIRSkI}. Hence, System \eqref{SIRSkI} is observable on $\Omega_1$ in any semi-open interval $[a,b)\subset\mathcal{I}$ with parameters in $\Theta$. If we know $\theta_0\in\Theta$ and some observations $y_{(x_0,\theta_0)}(t)$, for all $t\in [a,b)$, then we can determine $x_0$ performing now Steps 3, 7 and 8 in Algorithm \ref{procedure2}.

\noindent\textbf{Step 2.} We now need to find a suitable sets $\Omega_{2,\theta}\subset\Omega$, for every $\theta\in\Theta$, positively invariant w.r.t. the ODE system given in \eqref{SIRSkI}, and maps $g:\mathcal{D}\rightarrow\mathbb{R}^{q+p}$ and $r:\Theta\rightarrow\mathbb{R}^p$, for some suitable $\mathcal{D}\subset\mathbb{R}^{d_1'+1}$, $d_1'\in\mathbb{N}\cup\{0\}$, $q,p\in\mathbb{N}$, such that (C1), (C2) and (C3) of Theorem \ref{generalization_ident} are satisfied for some connected $\mathcal{S}\subset\mathcal{I}$ such that $[a,b)\subset\mathcal{S}$ and $g_{j,l}\left(y_{(\xi,\theta)}^{(0)}(t),\dots,y_{(\xi,\theta)}^{(d_1')}(t)\right)$ are analytic w.r.t. $t\in\mathcal{S}$, for any $j\in\{1,\dots,p\}$, $l\in\{1,\dots,q_j\}$, and {$(\xi,\theta)\in\Gamma_{2,\Theta}$}.

In the following, for the sake of cleanliness, we make a slight abuse of notation and avoid the specification of $\xi$, $\theta$ and $t$ when no confusion is possible; besides, we denote $y^{(0)}$ and $y^{(1)}$ as $y$ and $\dot{y}$, respectively.

Let us start by checking (C1) of Theorem \ref{generalization_ident}. Since \begin{equation}\label{ydoty}y=kI,\quad \dot{y}=(\beta S-\gamma)y,\quad \forall \ t\geq 0,\end{equation}neither $d_1'=0$ nor $d_1'=1$ are suitable. In particular, we cannot obtain a function $r$ injective in $\Theta$ suitable for \eqref{linear_eq} since we do not have any information on $\mu$ in the expressions of $y^{(0)}$ and $y^{(1)}$ given in \eqref{ydoty}.

Then, if we continue differentiating $\dot{y}$, denoting $y^{(2)}$ as $\ddot{y}$, we obtain: \begin{equation}\label{ddoty}\ddot{y}=(\beta S-\gamma)\dot{y}+\beta\dot{S}y=(\beta S-\gamma)y+\beta (-\beta SI+\mu(1-S-I))y, \quad \forall \ t\geq 0.\end{equation}Notice that, if $y\not\equiv 0$, which holds when we consider initial conditions in $\Omega_1$, we can define $(S,I)$ in terms of $y$, $\dot{y}$ and $\theta$ from \eqref{ydoty} as \begin{equation}\label{exprSI}S=\dfrac{1}{\beta}\left(\dfrac{\dot{y}}{y}+\gamma\right),\quad I=\dfrac{y}{k},\quad \forall \ t\geq 0.\end{equation}Hence, substituting \eqref{exprSI} in \eqref{ddoty} and performing some computations, we obtain an equation in the form of \eqref{linear_eq} in (C1) of Theorem \ref{generalization_ident}: \begin{equation}\label{linear_eq_SIRSkI} \ddot{y}-\dfrac{\dot{y}^2}{y}=-y\mu(\gamma-\beta)-y^2\dfrac{\beta}{k}(\gamma+\mu)-\dot{y}\mu-y\dot{y}\dfrac{\beta}{k},\quad \forall \ t\geq 0.\end{equation}
Then, we can see that (C1) of Theorem \ref{generalization_ident} is satisfied with $d_1'=2$, $p=1$, $q_1=q=4$, \begin{equation*}g_0(y,\dot{y},\ddot{y})=\ddot{y}-\dfrac{\dot{y}^2}{y}, \ g_1(y,\dot{y},\ddot{y})=-y, \ g_2(y,\dot{y},\ddot{y})=-y^2, \ g_3(y,\dot{y},\ddot{y})=-\dot{y}, \ g_4(y,\dot{y},\ddot{y})=-y\dot{y},\end{equation*} and \begin{equation*}r(\theta)=\left(\mu(\gamma-\beta),\dfrac{\beta}{k}(\gamma+\mu),\mu,\dfrac{\beta}{k}\right),\quad \forall\,\theta\in\Theta.\end{equation*}
Since $p=1$, we have denoted $g_{1,l}=g_l$, $l\in\{0,\dots,4\}$, for the sake of simplicity.

Let us now check (C2) of Theorem \ref{generalization_ident}, i.e., if the function $r:\Theta\rightarrow r(\Theta)\subset\mathbb{R}^4$ is injective. Indeed, given some parameter vector $\theta=(k,\beta,\gamma,\mu)^{\mathrm{T}}\in\Theta$, it is easy to see that we can invert the equation $r(\theta)=\sigma$ and obtain univocally \begin{equation*}\left(k,\beta,\gamma,\mu\right)=\left(\dfrac{1}{\sigma_4}\left(\dfrac{\sigma_2}{\sigma_4}-\sigma_3-\dfrac{\sigma_1}{\sigma_3}\right),\dfrac{\sigma_2}{\sigma_4}-\sigma_3-\dfrac{\sigma_1}{\sigma_3},\dfrac{\sigma_2}{\sigma_4}-\sigma_3,\sigma_3\right).\end{equation*}

Finally, we need to check (C3) of Theorem \ref{generalization_ident}, i.e., if, for any {$(\xi,\theta)\in\Gamma_{2,\Theta}$}, the functions $\{g_i(y,\dot{y},\ddot{y})\}_{i=1}^4$ are linearly independent with respect to time in some suitable $\mathcal{S}\subset\mathcal{I}$, for initial conditions in some suitable sets $\Omega_{2,\theta}\subset\Omega$, for $\theta\in\Theta$, positively invariant w.r.t. the ODE system given in \eqref{SIRSkI}. Since these functions are defined in all $\mathcal{I}$, we will consider $\mathcal{S}=\mathcal{I}$. Then, if they are linearly independent in $\mathcal{I}$, given that they are clearly also analytic, we will have linear independence in any $[a,b)\subset\mathcal{I}$. In the following Lemma \ref{li_SIRSkI}, we will directly prove linear independence for any $[a,b)\subset\mathcal{I}$.
\begin{lemma}\label{li_SIRSkI}
    For any $[a,b)\subset\mathcal{I}$, the functions $\{g_i(y,\dot{y},\ddot{y})\}_{i=1}^4$ are linearly independent in $[a,b)$ if {$\Gamma_{2,\Theta}=\bigcup_{\theta\in\Theta} (\Omega_1\setminus\{P_{\mathrm{e}}(\theta)\})\times\{\theta\}$ and $(\xi,\theta)\in \Gamma_{2,\Theta}$}, i.e., $\Omega_{2,\theta}=\Omega_1\setminus\{P_{\mathrm{e}}(\theta)\}$, where $P_{\mathrm{e}}(\theta)$ denotes the endemic equilibrium point associated with $\theta$.
\end{lemma}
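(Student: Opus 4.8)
The plan is to reduce the claim to a statement about the state trajectory and then exploit the fact that, off the equilibria, the trajectory cannot lie on a low-degree algebraic curve. First I would eliminate the output in favour of the states. Since $\xi\in\Omega_1$ excludes $\{I=0\}$, which is invariant, uniqueness gives $I(t)>0$ for all $t$, so $y=kI>0$ and $\dot y=(\beta S-\gamma)y$. A dependence relation $c_1 g_1+c_2 g_2+c_3 g_3+c_4 g_4\equiv 0$ reads $c_1 y+c_2 y^2+c_3\dot y+c_4 y\dot y\equiv 0$; dividing by $kI>0$ and substituting $\dot y/y=\beta S-\gamma$ turns it into $(c_1-c_3\gamma)+c_3\beta\,S+k(c_2-c_4\gamma)\,I+c_4 k\beta\,SI\equiv 0$. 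Because $\beta,k>0$, these four coefficients vanish if and only if $c_1=c_2=c_3=c_4=0$. Hence it suffices to prove that $1,\ S(t),\ I(t),\ S(t)I(t)$ are linearly independent on $[a,b)$.

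Next I would record that the ODE of System \eqref{SIRSkI} has, in $\Omega$, only the equilibria $P_{\mathrm{f}}=(1,0)^{\mathrm T}$ and (when $\mathcal{R}_0>1$) $P_{\mathrm{e}}(\theta)$. As $\Omega_1$ removes the whole manifold $\{I=0\}$ and $\Omega_{2,\theta}=\Omega_1\setminus\{P_{\mathrm{e}}(\theta)\}$ further removes the endemic point, the set $\Omega_{2,\theta}$ contains \emph{no} equilibrium. Therefore, for $(\xi,\theta)\in\Gamma_{2,\Theta}$ the solution is non-constant, so $I(\cdot)$ is a non-constant analytic function whose image on $[a,b)$ contains an open interval $J$. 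This is exactly where removing $P_{\mathrm{e}}(\theta)$ is needed: at an equilibrium all four functions are constant and hence trivially dependent.

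For the core step I would argue by contradiction. Suppose $L:=\lambda_0+\lambda_1 S+\lambda_2 I+\lambda_3 SI\equiv 0$ on $[a,b)$ with $(\lambda_i)\neq 0$; by analyticity this holds for all $t\ge 0$, i.e. the trajectory lies on the conic $\{L=0\}$. Since the trajectory is non-constant, its closure is a one-dimensional invariant curve contained in $\{L=0\}$, so an irreducible factor of $L$ must be a Darboux polynomial of the field, $\dot L=K\,L$ with a degree-one cofactor $K=aS+bI+c$. Writing $\dot L=(\lambda_1+\lambda_3 I)\dot S+(\lambda_2+\lambda_3 S)\dot I$ and matching monomials against $KL$, the top-degree terms $S^2I$ and $SI^2$ force $a=\beta$, $b=-\beta$ in the generic case $\lambda_3\neq 0$; the remaining matches then over-determine $c$, giving simultaneously $c=\beta-\gamma$ (from the $I$-coefficient) and $c=-(\gamma+\mu)$ (from the $SI$-coefficient), whence $\beta+\mu=0$, impossible since $\beta,\mu>0$.

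Finally I would dispose of the degenerate configurations: $\lambda_3=0$ (so $\{L=0\}$ is a line) and the reducible conic (a vertical line $S=\mathrm{const}$ times a horizontal line $I=\mathrm{const}$). In each, one checks that the only invariant curve of this form for the SIRS field is $\{I=0\}$ (cofactor $\beta S-\gamma$), which the trajectory avoids because $I>0$; this forces all $\lambda_i=0$, a contradiction. An equivalent, Darboux-free route is to solve $S=-(\lambda_0+\lambda_2 I)/(\lambda_1+\lambda_3 I)$ from $L\equiv 0$ on $J$, substitute into the two expressions $\dot S=-\beta SI+\mu(1-S-I)$ and $\dot S=\tfrac{dS}{dI}\dot I$, and identify the resulting rational functions of $I$ over the interval $J$; clearing denominators and matching coefficients produces the same impossible relation. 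I expect the coefficient-matching computation, together with the careful bookkeeping of the degenerate and reducible sub-cases, to be the main obstacle; the reduction in the first paragraph and the non-constancy argument are routine.
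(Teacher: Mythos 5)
Your proof is correct, but its core argument is genuinely different from the paper's. The opening reduction coincides with the paper's: both divide the putative relation by $kI>0$ (valid on $\Omega_1$ by invariance of $\{I=0\}$ and uniqueness of solutions) to reduce everything to the linear independence of $1,S,I,SI$, and both observe that equilibria must be excised since there the four functions are constant and trivially dependent. From that point the paper proceeds by direct elimination: it solves $I=-(A+BS)/(C+DS)$ from the relation, equates the resulting expression for $\dot I$ with $(\beta S-\gamma)I$, clears denominators to get a quartic polynomial in $S$ vanishing identically on $[a,b)$, concludes from analyticity and non-constancy of $S$ that all five coefficients $c_0,\dots,c_4$ vanish, and finishes with an elementary case analysis ($D\neq0$; then $C\neq 0$ with $B=0$ or $B=-C$) forcing $A=B=C=D=0$. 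You instead read the relation as saying the orbit lies on an algebraic curve of bidegree $(1,1)$ and invoke Darboux theory: an irreducible invariant factor of $L$ must satisfy $\dot L=KL$ with $\deg K\le1$ because the vector field is quadratic. I verified your monomial matching: with $\lambda_3\neq0$ the $S^2I$ and $SI^2$ terms give $a=\beta$, $b=-\beta$, and the $I$- and $SI$-coefficients then force both $c=\beta-\gamma$ and $c=-(\gamma+\mu)$, whence $\beta+\mu=0$, impossible; your degenerate cases (lines when $\lambda_3=0$, and the reducible bilinear $L=\lambda_3(S-s_0)(I-i_0)$) also check out, the only invariant line of the field with $\mu>0$ being $\{I=0\}$, which the orbit avoids. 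Two routine points should be made explicit in a full write-up: the lemma that an infinite orbit inside an irreducible algebraic curve forces its defining polynomial to be a Darboux polynomial (a Bezout/Zariski-density argument, or a citation), and the fact that $\dot L=KL$ is justified only for an irreducible invariant factor — though in your generic branch the matching equations also yield $\lambda_0=\lambda_1=0$, i.e. $L=I(\lambda_2+\lambda_3S)$ reducible, so a contradiction arises either way. Your ``Darboux-free'' fallback is essentially the paper's proof with the roles of $S$ and $I$ interchanged. As for what each approach buys: the paper's elimination is elementary and self-contained, needing only analyticity and vanishing of polynomial coefficients; your route is more structural, since it makes transparent that the obstruction to independence is exactly the invariant algebraic sets of the field, and that for $\mu=0$ the invariant polynomial $SI$ (product of the invariant lines $S=0$ and $I=0$) survives the cofactor analysis — precisely the degeneracy behind the SIR model's failure of joined observability-identifiability described in Section \ref{particularSIR}.
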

\begin{proof}
Consider some $[a,b)\subset\mathcal{I}$. For some $\theta\in\Theta$, we consider $\xi\in\Omega$, and assume there exist $a_1,a_2,a_3,a_4\in\mathbb{R}$ such that \begin{equation}\label{a1234y}a_1y(t)+a_2y(t)^2+a_3\dot{y}(t)+a_4y(t)\dot{y}(t)=0, \quad \forall \ t\in[a,b).\end{equation}We will prove that $a_1=a_2=a_3=a_4=0$ if $\xi\in\Omega_{2,\theta}$, for some $\Omega_{2,\theta}\subset\Omega$. As mentioned in Step 1, if $\xi_2=0$, then $y\equiv 0$ in $\mathcal{I}$, and hence \eqref{a1234y} is true for values $a_1,a_2,a_3,a_4$ not necessarily satisfying $a_1=a_2=a_3=a_4=0$, which implies that the corresponding functions $\{g_i(y,\dot{y},\ddot{y})\}_{i=1}^4$ are not linearly independent in $[a,b)$.

Therefore, let us consider $\xi_2\neq 0$, i.e., $\xi\in\Omega_1$. In this case, if $(S(t),I(t))^{\mathrm{T}}$ is the solution of the ODE system of \eqref{SIRSkI} with initial condition $\xi$ and parameter vector $\theta$, recall that $\Omega_1$ is positively invariant w.r.t. the ODE system given in \eqref{SIRSkI}, i.e., $(S(t),I(t))^{\mathrm{T}}\in\Omega_1$, for all $t\geq 0$. Then, we can rewrite \eqref{a1234y} as \begin{eqnarray*}a_1kI(t)+a_2k^2I^2(t)+a_3kI(t)(\beta S(t)-\gamma)+a_4k^2I^2(t)(\beta S(t)-\gamma)&=&0 \implies \\ \overset{(S,I)^{\mathrm{T}}\in\Omega_1}{\implies} a_1+a_2kI(t)+a_3(\beta S(t)-\gamma)+a_4kI(t)(\beta S(t)-\gamma)&=&0, \quad \forall\, t\in[a,b),\end{eqnarray*}which leads to \begin{equation*}(a_1-a_3\gamma)+a_3\beta S(t)+(a_2k-a_4k\gamma)I(t)+a_4k\beta S(t)I(t)=0, \quad \forall\, t\in [a,b).\end{equation*}Determining $\{a_i\}_{i=1}^4$ not all of them null such that this is fulfilled is equivalent to determining $A,B,C,D$ not all of them null such that \begin{equation}\label{ldepSI}A+BS(t)+CI(t)+DS(t)I(t)=0, \quad t\in[a,b),\end{equation}since $A=B=C=D=0$ if, and only if, $a_1=a_2=a_3=a_4=0$. That is, we will check whether $1,S,I,SI$ are linearly independent functions in $[a,b)$ considering initial conditions in $\Omega_1$ or not.

If we consider that $S$ or $I$ are constant, \eqref{ldepSI} is true for values of $A,B,C,D$ not necessarily satisfying $A=B=C=D=0$, which implies that the corresponding functions $\{g_i(y,\dot{y},\ddot{y})\}_{i=1}^4$ are not linearly independent in $[a,b)$. In $\Omega_1$, due to the analyticity of the system, this is only possible if $(S,I)^{\mathrm{T}}\equiv P_{\mathrm{e}}(\theta)$. Indeed, $\dot{I}\equiv0$ implies $I\equiv0$, which is excluded from $\Omega_1$, or $S\equiv \beta/\gamma$ and hence $(S,I)^{\mathrm{T}}\equiv P_{\mathrm{e}}(\theta)$; on the other hand, if $\dot{S}\equiv 0$, then, $S\equiv \xi_1$ and \begin{equation*}I\equiv\dfrac{\mu(1-\xi_1)}{\mu+\beta \xi_1},\quad \forall\, t\in[a,b),\end{equation*}i.e., we are at an equilibrium point. 

Hence, let us take $\Omega_{2,\theta}=\Omega_1\setminus\{P_{\mathrm{e}}(\theta)\}$ and check if, when $\xi\in\Omega_{2,\theta}$, the corresponding functions $1,S,I,SI$ are linearly independent in $\Omega_{2,\theta}$ for times in $[a,b)$. Assume that \eqref{ldepSI} holds and at least one between $C$ and $D$ is non-null. If we consider initial conditions in $\Omega_{2,\theta}$, we obtain the following expression for $I$: \begin{equation*}I(t)=-\dfrac{A+BS(t)}{C+DS(t)},\end{equation*}where $C+DS(t)$ is non-null for almost every $t\in [a,b)$, since $S$ is analytic and non-constant in $\Omega_{2,\theta}$. Without loss of generality, assume $C+DS(t)$ is non-null for all $t\in[a,b)$. We can hence differentiate this expression for $I(t)$ and obtain \begin{equation*}\dot{I}(t)=-\dot{S}(t)\dfrac{BC-AD}{(C+DS(t))^2} =\left((\beta S(t)+\mu)\dfrac{A+BS(t)}{C+DS(t)}-\mu(1-S(t))\right)\dfrac{BC-AD}{(C+DS(t))^2}, \quad \forall\, t\in [a,b).\end{equation*}On the other hand, \begin{equation*}\dot{I}(t)=\beta S(t)I(t)-\gamma I(t)=(\gamma-\beta S(t))\dfrac{A+BS(t)}{C+DS(t)}, \quad \forall\, t\in[a,b).\end{equation*}If we equal both expressions for $\dot{I}(t)$, we get \begin{equation*}\left((\beta S(t)+\mu)\dfrac{A+BS(t)}{C+DS(t)}-\mu(1-S(t))\right)\dfrac{BC-AD}{(C+DS(t))^2}=(\gamma-\beta S(t))\dfrac{A+BS(t)}{C+DS(t)},\end{equation*}for $t\in[a,b)$. After some computations in order to get rid of the denominators, we reach the following polynomial on $S(t)$: \begin{equation*}c_4S(t)^4+c_3S(t)^3+c_2S(t)^2+c_1S(t)+c_0=0,\quad \forall\, t\in[a,b),\end{equation*}where \begin{eqnarray*}c_0&=&\mu ABC-\mu A^2D+\mu ACD-\gamma AC^2-\mu BC^2,\\ c_1&=&\mu AD^2-\beta A^2D-(2\gamma+\mu)ACD-\mu ABD-\mu BCD+\beta AC^2-(\gamma-\mu)BC^2+\mu B^2C+\beta ABC, \\
c_2&=&\beta BC^2-(2\gamma-\mu)BCD-(\gamma+\mu)AD^2-\beta ABD+2\beta ACD+\beta B^2C, \\ c_3&=&\beta AD^2-\gamma BD^2+2\beta BCD, \\ c_4&=&\beta BD^2.\end{eqnarray*} Since $S$ is analytic non-constant in $[a,b)$, for this polynomial to be 0 in $[a,b)$, we need that $c_0=\dots=c_4=0$. We have assumed that at least one between $C$ and $D$ is non-null. We have two subcases:
\begin{itemize}
    \item Assume $D\neq 0$. Then, it is straightforward observing that $c_3=c_4=0$ implies $A=B=0$. But then, this implies that $I\equiv 0$ in $[a,b)$, which cannot happen for solutions in $\Omega_{2,\theta}$. Hence, $D=0$.
    \item Assume $C\neq 0$, and $D=0$ due to the previous argument. Then, $c_2=0$ implies $B=0$ or $B=-C$. If $B=0$, from $c_1=0$, we obtain $A=0$, which again cannot happen for solutions in $\Omega_{2,\theta}$. If we consider $B=-C$, $c_1=0$ implies $\gamma C^3=0$, which is not true since $\gamma>0$ and we have assumed $C\neq 0$.
\end{itemize}
Hence, we must have $C=D=0$. Then, equation \eqref{ldepSI} is \begin{equation*}A+BS(t)=0,\quad \forall\, t\in[a,b).\end{equation*}But $S$ is analytic non-constant for solutions in $\Omega_{2,\theta}$, and hence $A=B=0$.

Therefore, for any $\theta\in\Theta$, $1,S,I,SI$ or, equivalently, $\{g_i(y,\dot{y},\ddot{y})\}_{i=1}^4$ are linearly independent in any semi-open subinterval $[a,b)\subset\mathcal{I}$ if we consider $\xi$ in the following set: \begin{equation*}\Omega_{2,\theta}=\left\{(\xi_1,\xi_2)^{\mathrm{T}}\in[0,1)\times (0,1] \ : \ \xi_1+\xi_2\leq 1\right\}\setminus \{P_{\mathrm{e}}(\theta)\}\subset\Omega_1\subset\Omega.\end{equation*}
\end{proof}

To continue with Step 2, we need to check if the set $\Omega_{2,\theta}$ is positively invariant with respect to the system of ODEs of System \eqref{SIRSkI}.

\begin{lemma}\label{Omega_posinv}
    Let $\theta\in\Theta$. The set $\Omega_{2,\theta}=\{(\xi_1,\xi_2)^{\mathrm{T}}\in[0,1)\times (0,1] \ : \ \xi_1+\xi_2\leq 1\}\setminus\{P_{\mathrm{e}}(\theta)\}$ is positively invariant with respect to the system 
\begin{equation}\label{SIRS}\left\{\begin{array}{ccl}
    \dot{S}&=&-\beta SI+\mu(1-S-I),\\
    \dot{I}&=&\beta SI-\gamma I.
\end{array}\right.\end{equation}
\end{lemma}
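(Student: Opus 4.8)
The plan is to leverage the positive invariance of $\Omega_1$ already obtained in Step 1, and then to argue that deleting the single point $P_{\mathrm{e}}(\theta)$ does not destroy this property, using uniqueness of solutions. Since $\Omega_1$ is positively invariant with respect to \eqref{SIRS}, any trajectory starting in $\Omega_{2,\theta}=\Omega_1\setminus\{P_{\mathrm{e}}(\theta)\}\subset\Omega_1$ remains in $\Omega_1$ for all $t\ge 0$; hence it suffices to show that such a trajectory never reaches $P_{\mathrm{e}}(\theta)$ at any finite positive time.

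First I would dispose of the degenerate case. The $I$-coordinate of $P_{\mathrm{e}}(\theta)$ equals $\mu(1-1/\mathcal{R}_0)/(\gamma+\mu)$, which is strictly positive only when $\mathcal{R}_0>1$. If $\mathcal{R}_0\le 1$, then $P_{\mathrm{e}}(\theta)\notin\Omega_1$, so $\Omega_{2,\theta}=\Omega_1$ and the claim follows at once from Step 1. The substantive case is $\mathcal{R}_0>1$, for which $P_{\mathrm{e}}(\theta)\in\Omega_1$.

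The core of the argument is that $P_{\mathrm{e}}(\theta)$ is an equilibrium of \eqref{SIRS}, so the constant map $z(t)\equiv P_{\mathrm{e}}(\theta)$ is a globally defined solution. Suppose, for contradiction, that for some $\xi\in\Omega_{2,\theta}$ the solution satisfies $x(t_0;\xi,\theta)=P_{\mathrm{e}}(\theta)$ at some finite $t_0\ge 0$. Because $f$ is locally Lipschitz, the autonomous system enjoys uniqueness of solutions both forward and backward in time (reversing time yields the Lipschitz field $-f$). As $x(\cdot;\xi,\theta)$ and $z$ agree at $t_0$, they must coincide on their whole common interval of definition, which forces $\xi=x(0;\xi,\theta)=P_{\mathrm{e}}(\theta)$, contradicting $\xi\neq P_{\mathrm{e}}(\theta)$. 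Therefore the trajectory never meets $P_{\mathrm{e}}(\theta)$ at a finite time and stays in $\Omega_{2,\theta}$ for all $t\ge 0$, which is the desired positive invariance.

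The main subtlety to flag is that, for $\mathcal{R}_0>1$, the endemic equilibrium is globally asymptotically stable off the invariant manifold $I\equiv 0$, so trajectories in $\Omega_{2,\theta}$ do converge to $P_{\mathrm{e}}(\theta)$ as $t\to+\infty$; the point is that this convergence is only asymptotic. Positive invariance concerns finite times alone, and the backward-uniqueness argument above rules out any finite-time arrival, so asymptotic approach is entirely compatible with $\Omega_{2,\theta}$ being positively invariant.
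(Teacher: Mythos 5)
Your proof is correct and takes essentially the same approach as the paper's: both deduce the claim from positive invariance of the larger set together with uniqueness of solutions, which prevents any trajectory starting off the equilibrium $P_{\mathrm{e}}(\theta)$ from reaching it in finite time (the paper re-establishes the $I>0$ part directly via the bound $I(t)\geq \xi_2\mathrm{e}^{-\gamma t}$, whereas you delegate it to the invariance of $\Omega_1$ from Step 1). Your explicit time-reversal/backward-uniqueness argument and the case split on $\mathcal{R}_0\leq 1$ versus $\mathcal{R}_0>1$ merely make precise what the paper leaves implicit.
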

\begin{proof}
   As said at the beginning of Section \ref{obsnident} the set \begin{equation*}\Omega=\{(\xi_1,\xi_2)^{\mathrm{T}}\in[0,1]^2\, : \, \xi_1+\xi_2\leq1\}\end{equation*} is positively invariant with respect to System \eqref{SIRS}.
    
    Let $P_{\mathrm{f}}=(1,0)^{\mathrm{T}}$ be the disease-free equilibrium of System \eqref{SIRS} and, given $\theta\in\Theta$, $P_{\mathrm{e}}(\theta)=(1/\mathcal{R}_0, \mu(1-1/\mathcal{R}_0)/(\mu+\gamma))^{\mathrm{T}}$ the endemic equilibrium (which is inside $\Omega$ if, and only if,  $\mathcal{R}_0>1$). Since the solution of this system given an initial condition is unique in $t\geq 0$, then the set $\Omega\setminus\{P_{\mathrm{f}},P_{\mathrm{e}}(\theta)\}$ is still positively invariant with respect to System \eqref{SIRS}.

    On the other hand, if an initial condition $\xi$ is such that $\xi_2\neq 0$, then $I(t)>0$, for all $t\geq 0$, due to the uniqueness of solutions. Indeed, since $S(t)\geq 0$, $\forall\, t\geq 0$, \begin{equation*}\dot{I}(t)=(\beta S(t)-\gamma)I(t)\geq -\gamma I(t),\quad \forall\, t\geq 0 \quad \implies \quad I(t)\geq \xi_2\mathrm{e}^{-\gamma t}>0,\quad \forall\, t\geq0.\end{equation*}
    Thus, $\{(\xi_1,\xi_2)^{\mathrm{T}} \in[0,1)\times (0,1] \ : \ \xi_1+\xi_2\leq 1\}\setminus\{P_{\mathrm{e}}(\theta)\}$ is also a positively invariant set with respect to System \eqref{SIRS}.
\end{proof}

Hence, Step 2 is finished, which allows us to conclude that System \eqref{SIRSkI} is identifiable on $\Theta$ in any semi-open interval $[a,b)\subset\mathcal{I}$ with initial conditions {consistent with the family $\{\Omega_{2,\theta}\}_{\theta\in\Theta}$}. Actually, if we know $x_0\in\Omega_{2,\theta_0}$  and some observations $y_{(x_0,\theta_0)}(t)$, for $t\in [a,b)\subset\mathcal{I}$, then we can determine $\theta_0$ performing Steps 3--6 in Algorithm \ref{procedure2}.

\textbf{Step 3.} Set $\Omega_{\theta}=\Omega_1\cap\Omega_{2,\theta}=\Omega_{2,\theta}$, $\theta\in\Theta$. Then, this is a positively invariant set, and we assume $x_0\in\Omega_{\theta_0}$.

Notice that, for any $\theta\in\Theta$, not considering initial points in $\{(\xi_1,0):\xi_1\in[0,1]\}\cup\{P_{\mathrm{e}}(\theta)\}$ is not restrictive with respect to the observations we can work with, since $y\equiv 0$ would mean there is no disease and therefore no point of study, and $y\equiv$constant$\neq0$ means the disease is already endemic and it will not vary unless we perturb the system (e.g., with migration or vaccination).

Hence, System \eqref{SIRSkI} is jointly observable-identifiable on {$\Gamma_{\Theta}$} in any $[a,b)\subset\mathcal{I}$. If we had some particular observations $y_{(x_0,\theta_0)}$ in $[a,b)\subset\mathcal{I}$, we could continue with Steps 4--8 in Algorithm \ref{procedure2} or Steps 4--9 in Algorithm \ref{procedure3}.

\subsection{A limiting case: The SIR model}\label{particularSIR}

Another basic compartmental model is the SIR model, where the considered populations are the same as for System \eqref{SIRSkI}, but there is no loss of immunity, i.e., $\mu=0$. Again, we can consider the observation of an unknown portion of infected individuals and hence obtain the following system:
\begin{equation}\label{SIRkI}
    \left\{\begin{array}{lcl}
        \dot{S} & = & -\beta SI, \\
        \dot{I} & = & \beta SI-\gamma I, \\
        y & = & kI,
    \end{array}\right.
\end{equation}where we have already simplified the dimension taking into account that the quantity $S+I+R=1$ is preserved.

If one thinks of whether System \eqref{SIRkI} is observable, identifiable or jointly observable-identifiable, it seems it will satisfy the same properties as the previous System \eqref{SIRSkI}, and it will be easier to prove them, since we got rid of parameter $\mu$. However, this is far from being true.

If we perform the same computations as for the SIRS model, we obtain the same result for Step 1, which implies that System \eqref{SIRkI} is observable on $\Omega_1=\{(\xi_1,\xi_2)^{\mathrm{T}}\in[0,1)\times(0,1]\ : \ \xi_1+\xi_2\leq 1\}$ in any $[a,b)\subset\mathcal{I}$ with parameters in $\Theta=(0,1]\times(0,\infty)^2$. Regarding Step 2, as in \eqref{exprSI} and \eqref{linear_eq_SIRSkI} with $\mu=0$, we obtain that \begin{equation}\label{SI_SIRkI}I=\dfrac{y}{k},\quad S=\dfrac{1}{\beta}\left(\dfrac{\dot{y}}{y}+\gamma\right)\end{equation}and \begin{equation}\label{ddotSIR}\ddot{y}-\dfrac{\dot{y}}{y}=-\dfrac{\beta\gamma}{k}y^2-\dfrac{\beta}{k}y\dot{y}.\end{equation} Then, \begin{equation*}r(k,\beta,\gamma)=\left(\dfrac{\beta\gamma}{k},\dfrac{\beta}{k}\right),\end{equation*}which is clearly not injective in $\Theta$, and hence we cannot complete Step 2 using \eqref{ddotSIR}. However, recalling Remark \ref{sufficient_conditions}, System \eqref{SIRkI} is indeed identifiable considering a suitable positively invariant subset of $\Omega_1$, as we can see in the following Lemma \eqref{SIRkIident}, which does not need the injectivity of $r$.
\begin{lemma}\label{SIRkIident}
    System \eqref{SIRkI} is identifiable on $\Theta=(0,1]\times(0,\infty)^2$ in any $[a,b)\subset\mathcal{I}$, with initial conditions in $\Tilde{\Omega}=\{(\xi_1,\xi_2)^{\mathrm{T}}\in(0,1)^2\ : \ \xi_1+\xi_2\leq 1\}$\footnote{Note that $\Tilde{\Omega}$ is independent of the choice of $\theta$.}.
\end{lemma}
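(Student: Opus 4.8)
The plan is to argue straight from Definition \ref{identifiability_set}, because the non-injectivity of $r(k,\beta,\gamma)=(\beta\gamma/k,\beta/k)$ forbids a direct appeal to Theorem \ref{generalization_ident} — this is exactly the situation anticipated in Remark \ref{sufficient_conditions}. So I would fix $\theta_0=(k,\beta,\gamma)$ and $\theta_0'=(k',\beta',\gamma')$ in $\Theta$, fix $\xi=(\xi_1,\xi_2)^{\mathrm{T}}\in\Tilde{\Omega}$, and suppose $y_{(\xi,\theta_0)}(t)=y_{(\xi,\theta_0')}(t)$ for all $t\in[a,b)$, the goal being $k=k'$, $\beta=\beta'$ and $\gamma=\gamma'$. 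The decisive point is that both outputs are generated from the \emph{same} initial condition $\xi$, so the value of $y$ at $t=0$ exposes the factor $k$ directly.

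First I would recover $k$. Since the trajectories and output of System \eqref{SIRkI} are analytic on the connected set $\mathcal{I}=[0,\infty)$, the coincidence of $y_{(\xi,\theta_0)}$ and $y_{(\xi,\theta_0')}$ on $[a,b)$ propagates by the identity principle to all of $\mathcal{I}$, in particular to $t=0$. Evaluating there gives $k\xi_2=y_{(\xi,\theta_0)}(0)=y_{(\xi,\theta_0')}(0)=k'\xi_2$, and since $\xi\in\Tilde{\Omega}$ forces $\xi_2\in(0,1)$, i.e.\ $\xi_2\neq0$, I conclude $k=k'$.

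Next I would use the input-output equation. As $\xi_2>0$ implies $y>0$ throughout, the derivation leading to \eqref{SI_SIRkI}--\eqref{ddotSIR} is valid for both parameter vectors. The left-hand side of \eqref{ddotSIR} depends only on $y$ and its derivatives, which the two outputs share on $[a,b)$; hence the two right-hand sides must coincide, yielding
\begin{equation*}
\Bigl(\tfrac{\beta\gamma}{k}-\tfrac{\beta'\gamma'}{k'}\Bigr)y^{2}+\Bigl(\tfrac{\beta}{k}-\tfrac{\beta'}{k'}\Bigr)y\dot y=0,\qquad t\in[a,b).
\end{equation*}
It then remains to show that $y^{2}$ and $y\dot y$ are linearly independent on $[a,b)$ for $\xi\in\Tilde{\Omega}$. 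Dividing by $y>0$, this reduces to the independence of $y$ and $\dot y$, i.e.\ to $y$ not being a pure exponential; but $\dot y/y=\beta S-\gamma$, and for $\xi\in\Tilde{\Omega}$ one has $\xi_1>0$ and $\xi_2>0$, so $\dot S(0)=-\beta\xi_1\xi_2<0$ and the analytic function $S$ is non-constant, whence $\beta S-\gamma$ is non-constant and $y,\dot y$ are independent. This is precisely why $\Tilde{\Omega}$ must sit in the open square $(0,1)^2$, excluding $S_0=0$.

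Linear independence then forces $\beta\gamma/k=\beta'\gamma'/k'$ and $\beta/k=\beta'/k'$; combined with $k=k'$ from the first step, these give $\beta=\beta'$ and then $\gamma=\gamma'$, so $\theta_0=\theta_0'$ and identifiability on $[a,b)$ follows. I expect the main obstacle to be conceptual rather than computational: the non-injective $r$ means the hypotheses of Theorem \ref{generalization_ident} cannot all hold, and the proof instead hinges on the extra information carried by the shared initial condition — reading off $k$ at $t=0$ — which is available for identifiability but \emph{not} for the joint observability-identifiability problem. The only technical care required is the analytic-continuation step needed to reach $t=0$ when $a>0$, together with the observation that the strict interior defining $\Tilde{\Omega}$ (rather than $\Omega_1$) is exactly what keeps $S$ non-constant.
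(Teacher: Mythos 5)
Your proof is correct, but it takes a genuinely different route from the paper's in the step where the non-injectivity of $r$ must be overcome. The core you share with the paper is the linear-independence argument: both of you reduce $a_1y^2+a_2y\dot y\equiv 0$ on $[a,b)$ (using $y>0$, guaranteed by $\xi_2>0$) to the non-constancy of $\dot y/y=\beta S-\gamma$, hence of $S$; the paper phrases the exclusion as ``$S$ constant forces $S\equiv 0$ or $I\equiv 0$, impossible in $\Tilde{\Omega}$,'' while you use $\dot S(0)=-\beta\xi_1\xi_2<0$ together with analytic continuation --- same substance, and both correctly locate why $\Tilde{\Omega}$ must sit in the open square. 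This pins down $\gamma$ and $\beta/k$ for both proofs. The divergence is in recovering $k$ and $\beta$ individually: the paper is constructive, reading off $k=y(0)/\xi_2$ and $\beta$ from $\dot y(0)/y(0)$ when $a=0$, and when $a>0$ introducing the rescaled variables $X=\beta S$, $Y=kI$, whose system $\dot X=-\sigma_2XY$, $\dot Y=Y(X-\gamma)$ involves only the already-recovered $\sigma_2=\beta/k$ and $\gamma$, and integrating it backwards from $t=a$ to obtain $\beta\xi_1$ and $k\xi_2$, then dividing by the known $\xi_1,\xi_2$. You instead propagate the identity $y_{(\xi,\theta_0)}\equiv y_{(\xi,\theta_0')}$ from $[a,b)$ to $t=0$ by the identity principle for the analytic outputs (the same device the paper uses via \cite[Theorem 8.5]{Rudin} in Lemma \ref{infinitesimal_ident}), read off $k\xi_2=k'\xi_2$ directly, and then close with the coefficient identities $\beta\gamma/k=\beta'\gamma'/k'$ and $\beta/k=\beta'/k'$ --- which are justified since equality of the two outputs on $[a,b)$ forces equality of $\dot y$ and $\ddot y$ there, a point you use implicitly and should state. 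Your route is shorter, unifies the cases $a=0$ and $a>0$, and your closing diagnosis --- that the shared initial condition is exactly the extra information available for identifiability but not for the joint problem --- matches the paper's finding that only $\gamma$, $\beta_0/k_0$, $\beta_0S_0$, $k_0I_0$ survive joint observability-identifiability. What the paper's longer route buys is constructivity: backward integration of the $(X,Y)$ system is a numerically implementable recovery procedure in the spirit of Algorithms \ref{procedure1}--\ref{procedure3}, whereas analytic continuation of data from $[a,b)$ back to $t=0$ is a purely logical device that cannot be executed on real observations. (One incidental point in your favor: \eqref{ddotSIR} as printed has $\dot y/y$ on the left-hand side where the computation \eqref{linear_eq_SIRSkI} with $\mu=0$ gives $\dot y^2/y$; your version is the correct one, and the discrepancy cancels in your subtraction in any case.)
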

\begin{proof}
    System \eqref{SIRkI} will be identifiable on $\Theta$ with initial conditions in $\Tilde{\Omega}$ if, given any initial condition $\xi\in\Tilde{\Omega}$ and some observations $y$ in $[a,b)$, we can determine $\theta\in\Theta$ univocally.

    First of all, one can prove, similarly to Lemma \ref{Omega_posinv}, that $\Tilde{\Omega}$ is positively invariant with respect to the ODE system in System \eqref{SIRkI}. Equations \eqref{SI_SIRkI} and \eqref{ddotSIR} are still valid. Therefore, if we prove that $y^2$ and $y\dot{y}$ are linearly independent w.r.t. $t\in[a,b)$, then there is a unique solution $\sigma=(\sigma_1,\sigma_2)$ to \begin{equation}\label{ddotsigma}\ddot{y}-\dfrac{\dot{y}}{y}=-\sigma_1y^2-\sigma_2y\dot{y},\end{equation}where $\sigma_1=\beta\gamma/k$ and $\sigma_2=\beta/k$. To see that the linear independence holds, let us consider $a_1,a_2\in\mathbb{R}$ such that \begin{equation*}a_1y^2+a_2y\dot{y}=0,\quad \forall\ t\in [a,b).\end{equation*}Since $\tilde{\Omega}$ is positively invariant, then $I\not \equiv 0$ in $[a,b)$, and hence $y\not\equiv 0$, which implies \begin{equation*}a_1y+a_2\dot{y}=0.\end{equation*}This is equivalent to \begin{equation*}a_1kI+a_2kI(\beta S-\gamma)=0,\ \forall\ t\in [a,b), \quad \text{which implies}\quad a_1+a_2(\beta S-\gamma)=0, \ \forall\ t\in[a,b).\end{equation*}Since $S$ is analytic, $a_1$ or $a_2$ are non-null if, and only if, $S$ is constant. Given that $\dot{S}=-\beta SI$, this can only happen if $S(t)=0$ or $I(t)=0$, for all $t\geq 0$, which cannot occur in $\Tilde{\Omega}$. Hence, $y^2$ and $y\dot{y}$ are linearly independent when we consider $\xi\in\Tilde{\Omega}$ and $\theta\in\Theta$, and therefore there is a unique solution $\sigma$ to \eqref{ddotsigma}. Then, we can determine univocally $\beta/k=\sigma_2$ and $\gamma=\sigma_1/\sigma_2$.

    If $a=0$, taking into account that $\xi_1,\xi_2\neq 0$ in $\tilde{\Omega}$, we can conclude directly determining \begin{equation*}k=\dfrac{y(0)}{\xi_2},\quad \gamma=\dfrac{\sigma_1}{\sigma_2},\quad \beta=\dfrac{1}{\xi_1}\left(\dfrac{\dot{y}(0)}{y(0)}+\dfrac{\sigma_1}{\sigma_2}\right).\end{equation*}

    If $a>0$, notice now that we can make the following change of variables in the ODE system in System \eqref{SIRkI}: $X=\beta S$ and $Y=k I$. Then, \begin{equation*}\left\{\begin{array}{lcl}\dot{X} & = & -\sigma_2 XY,\\
        \dot{Y} & = & Y(X-\gamma),\end{array}\right.\end{equation*}and we can recover $\beta \xi_1$ and $k\xi_2$ integrating backwards considering as initial condition \begin{equation*}X(a)=\dfrac{\dot{y}(a)}{y(a)}+\dfrac{\sigma_1}{\sigma_2},\quad Y(a)=y(a).\end{equation*}Once we know $X(0)=\beta\xi_1$ and $Y(0)=k\xi_2$, taking into account that $\xi_1,\xi_2\neq 0$ in $\Tilde{\Omega}$, we can determine univocally $k$, $\beta$ and $\gamma$ as \begin{equation*}k=\dfrac{Y(0)}{\xi_2},\quad \beta = \dfrac{X(0)}{\xi_1},\quad \gamma=\dfrac{\sigma_1}{\sigma_2}.\end{equation*}
\end{proof}

Therefore, System \eqref{SIRkI} is observable on $\Omega_1$ with parameters in $\Theta$ and identifiable on $\Theta$ with initial conditions in $\Tilde{\Omega}\subset\Omega_1$, in any $[a,b)\subset\mathcal{I}$. Nevertheless, it is not jointly observable-identifiable on $\Tilde{\Omega}\times\Theta$. Actually, in \cite{Cunniffeetal}, the authors treat this case and prove that, assuming both $x_0=(S_0,I_0)\in\Tilde{\Omega}$ and $\theta_0=(k_0,\beta_0,\gamma_0)\in\Theta$ unknown, we can only determine $\gamma_0$, $\beta_0/k_0$, $\beta_0 S_0$ and $k_0 I_0$, i.e., it is \textit{partially} jointly observable-identifiable. In fact, considering \eqref{SI_SIRkI} and \eqref{ddotSIR}, if $(\sigma_1,\sigma_2)$ is the solution to \begin{equation*}\ddot{y}-\dfrac{\dot{y}}{y}=-\sigma_1y^2-\sigma_2y\dot{y}\end{equation*}(which we know is unique due to the proof of Lemma \ref{SIRkIident}) and $(k_0,\beta_0,\gamma_0)$ is any solution to $r(k_0,\beta_0,\gamma_0)=(\sigma_1,\sigma_2)$, notice that our method matches this result, since we have \begin{equation*}I=\dfrac{y}{k_0},\quad S=\dfrac{1}{\beta_0}\left(\dfrac{\dot{y}}{y}+\gamma_0\right),\quad \dfrac{\beta_0\gamma_0}{k_0}=\sigma_1,\quad \dfrac{\beta_0}{k_0}=\sigma_2,\end{equation*}and hence, if we know $y(0)$ and $\dot{y}(0)$, it is straightforward checking that we can recover $\gamma_0$, $\beta_0/k_0$, $\beta_0S_0$, $k_0I_0$ univocally as follows: \begin{equation*}\gamma_0=\dfrac{\sigma_1}{\sigma_2},\quad \dfrac{\beta_0}{k_0}=\sigma_2,\quad \beta_0S_0=\dfrac{\dot{y}(0)}{y(0)}+\dfrac{\sigma_1}{\sigma_2},\quad k_0I_0=y(0),\end{equation*}and we cannot obtain more information. If $a>0$, then we can proceed as in the proof of Lemma \ref{SIRkIident} performing the change of variables $X=\beta_0S$ and $Y=k_0I$ and integrating backwards from $t_0=a$ to $0$.

One could also not know at first if some given observations $y_{(x_0,\theta_0)}$ correspond to an SIR or an SIRS model when both $x_0$ and $\theta_0$ are unknown, and wonder if the same data might be reproduced with two different sets of parameters $(k_1,\beta_1,\gamma_1,0)^{\mathrm{T}}$ and $(k_2,\beta_2,\gamma_2,\mu_2)^{\mathrm{T}}$, $\mu_2\neq 0$. To tackle this question, we consider an \textit{extended SIRS model} with parameters \begin{equation*}\theta:=(k,\beta,\gamma,\mu)^{\mathrm{T}}\in\Theta^*=(0,1]\times(0,\infty)^2\times [0,\infty).\end{equation*}Notice that the SIR model is a particular case of this extension of the SIRS model, and can be regarded as a limiting case of the SIRS model when $\mu\rightarrow 0$.

Then again, in Step 2 we would obtain \begin{equation*}r(k,\beta,\gamma,\mu)=\left(\mu(\gamma-\beta),\dfrac{\beta}{k}(\gamma+\mu),\mu,\dfrac{\beta}{k}\right),\end{equation*}which is not injective in $\Theta^*$ and hence we cannot complete this step with this function $r$. Nevertheless, this extended model can help to distinguish whether the observations come from an SIR model or an SIRS model, as illustrated in Section \ref{distinguish}. Let us first do a quick comparison between both models in Section \ref{compSIRS}.

\subsubsection{Comparison between SIR and SIRS models} \label{compSIRS}

Although passing from $\mu= 0$ to $\mu> 0$ may change substantially the behavior of the solutions, since the SIR model does not admit an endemic state, whereas the SIRS model does, they can be hardly distinguishable at early stages if $\mu$ is very small. We let $x_{\mathrm{SIR}}(t)$ and $x_{\mathrm{SIRS}}(t)$ be the solutions to the SIR given by the ODE system of \eqref{SIRkI} and the SIRS model \eqref{SIRS}, respectively, with the same initial condition, and we study the dependence of $||x_{\mathrm{SIR}}(t)-x_{\mathrm{SIRS}}(t)||_2$ on $\mu$. To do this, we are going to consider the same parameters $\beta$ and $\gamma$ for both models, and we will base ourselves on Theorem 3.4., Chapter 3 of \cite{Khalil}, particularized to our autonomous context:

\begin{theorem}{\cite[Chapter 3]{Khalil}}\label{KhalilThm}
    Let $f$ be a Lipschitz map on $W$ with a global Lipschitz constant $L$, where $W\subset \mathbb{R}^n$ is an open connected set. Let $y(t)$ and $z(t)$ be solutions of \begin{equation*}\dot{y}=f(y),\ y(t_0)=y_0,\quad \text{and}\quad \dot{z}=f(z)+g(z),\ z(t_0)=z_0,\end{equation*}such that $y(t)$, $z(t)\in W$, for all $t\in[t_0,t_1]$. Suppose that \begin{equation*}||g(x)||\leq \mu,\quad \forall\, x\in W,\end{equation*}for some $\mu>0$. Then, \begin{equation*}||y(t)-z(t)||\leq ||y_0-z_0||\exp(L(t-t_0))+\dfrac{\mu}{L}[\exp(L(t-t_0))-1],\quad \forall\, t\in[t_0,t_1].\end{equation*}
\end{theorem}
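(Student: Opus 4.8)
The plan is to reduce the vector estimate to a scalar integral inequality and then close it with Gronwall's lemma. First I would rewrite both solutions in their equivalent integral (mild) form on $[t_0,t]$, namely $y(t)=y_0+\int_{t_0}^t f(y(s))\diff s$ and $z(t)=z_0+\int_{t_0}^t\big(f(z(s))+g(z(s))\big)\diff s$, which is legitimate since $f$ and $g$ are continuous on $W$ and both trajectories remain in $W$ by hypothesis. Working from the integral form, rather than differentiating $\norm{y-z}$ directly, conveniently sidesteps the non-differentiability of the norm at points where $y(s)=z(s)$.

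Subtracting the two identities, taking norms, and setting $u(t)\coloneqq\norm{y(t)-z(t)}$, the triangle inequality together with the global Lipschitz bound $\norm{f(y(s))-f(z(s))}\le L\,u(s)$ (valid precisely because $y(s),z(s)\in W$ and $L$ is a global constant on $W$) and the uniform bound $\norm{g(z(s))}\le\mu$ would give
\begin{equation*}
u(t)\le \norm{y_0-z_0}+\mu\,(t-t_0)+L\int_{t_0}^t u(s)\diff s,\qquad t\in[t_0,t_1].
\end{equation*}
Next I would apply the integral form of Gronwall's inequality with affine forcing term $\alpha(t)=\norm{y_0-z_0}+\mu(t-t_0)$ and constant kernel $L$. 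Equivalently, and perhaps more transparently, I would introduce the scalar comparison function $v$ solving the linear ODE $\dot v=Lv+\mu$ with $v(t_0)=\norm{y_0-z_0}$; its explicit solution is exactly $v(t)=\norm{y_0-z_0}\exp(L(t-t_0))+\tfrac{\mu}{L}\big[\exp(L(t-t_0))-1\big]$, the right-hand side of the claimed bound. A comparison argument — the upper right Dini derivative satisfies $D^+u(t)\le L\,u(t)+\mu=\dot v(t)$ with $u(t_0)\le v(t_0)$ — then yields $u(t)\le v(t)$ on $[t_0,t_1]$, which is the assertion. Here the homogeneous part $\norm{y_0-z_0}\exp(L(t-t_0))$ captures the sensitivity to the initial gap, while the term $\tfrac{\mu}{L}\big[\exp(L(t-t_0))-1\big]$ is precisely the particular-solution contribution of the constant forcing $\mu$.

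There is no deep obstacle here, as this is the classical continuous-dependence/perturbation estimate; the only points requiring care are technical. The first is that the Lipschitz estimate must be applicable along the entire trajectory, which is guaranteed by the standing hypothesis $y(t),z(t)\in W$ for all $t\in[t_0,t_1]$ together with the global constant $L$ on the connected open set $W$. The main point to handle cleanly is the mild non-smoothness of $t\mapsto u(t)$: this is avoided entirely by arguing from the integral inequality and invoking Gronwall, or is dealt with by using the one-sided Dini derivative in the comparison-lemma variant. The remaining step is the elementary integration $\int_{t_0}^t\alpha(s)\,L\,e^{L(t-s)}\diff s$, whose evaluation reproduces the stated closed form.
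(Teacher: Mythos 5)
Your proof is correct and coincides with the argument behind the paper's statement: the paper gives no proof of its own, quoting this as Theorem 3.4 of \cite[Chapter 3]{Khalil}, and Khalil's proof is precisely your route --- subtract the integral representations, bound $u(t)=\|y(t)-z(t)\|$ by $\|y_0-z_0\|+\mu(t-t_0)+L\int_{t_0}^t u(s)\,\mathrm{d}s$, and close with the Gronwall--Bellman inequality (or the equivalent comparison with $\dot v=Lv+\mu$, $v(t_0)=\|y_0-z_0\|$). Your evaluation of the convolution term is also correct, as it reproduces exactly $\|y_0-z_0\|\exp(L(t-t_0))+\frac{\mu}{L}\left[\exp(L(t-t_0))-1\right]$, so nothing is missing.
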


Then, we are going to check the different conditions required in Theorem \ref{KhalilThm} in order to obtain an estimation of $||x_{\mathrm{SIR}}(t)-x_{\mathrm{SIRS}}(t)||_2$.

Notice that both models share the same positively invariant set \begin{equation*}\Omega=\{(\xi_1,\xi_2)^{\mathrm{T}}\in[0,1]^2 \ : \ \xi_1+\xi_2\leq 1\},\end{equation*}which is compact, but the derivatives are well defined. Let $x_{\mathrm{SIR}}=(S_{\mathrm{SIR}},I_{\mathrm{SIR}})^{\mathrm{T}}$ and $x_{\mathrm{SIRS}}=(S_{\mathrm{SIRS}},I_{\mathrm{SIRS}})^{\mathrm{T}}$ be the state variables associated to the ODE system of \eqref{SIRkI} and \eqref{SIRS}, respectively. We are going to rewrite these systems in the form presented in Theorem \ref{KhalilThm}. Let $x_0$ be the same initial condition at $t_0$ for both models, $\eta=(\beta,\gamma,\mu)^{\mathrm{T}}$ and $\nu=(\beta,\gamma)^{\mathrm{T}}$ two parameter vectors, and the following function: \begin{equation*}f_{\mathrm{SIR}}(x,\nu)=\left(\begin{array}{c}
    -\beta x_1x_2  \\
     \beta x_1x_2-\gamma x_2
\end{array}\right), \quad x\in\Omega.\end{equation*}Notice that $f_{\mathrm{SIR}}(x,\nu)$ is Lipschitz in $x$ on the compact set $\Omega$, for some Lipschitz constant $L>0$ which is independent of $\mu$.

Let us also define \begin{equation*}g(x,\mu)=\left(\begin{array}{c} \mu(1-x_1-x_2) \\ 0 \end{array}\right), \quad x\in\Omega.\end{equation*} Then, $g$ satisfies
\begin{equation*}||g(x,\mu)||_2=\left|\left|\left(\begin{array}{c} \mu(1-x_1-x_2) \\ 0 \end{array}\right)\right|\right|_{2}\leq \mu, \quad \forall\, x\in\Omega.\end{equation*}

We consider now the two following systems: \begin{equation*}\left(\begin{array}{c} \dot{S}_{\mathrm{SIR}}(t;x_0,\nu) \\ \dot{I}_{\mathrm{SIR}}(t;x_0,\nu) \end{array}\right)=f_{\mathrm{SIR}}(x_{\mathrm{SIR}}(t;x_0,\nu), \nu)\end{equation*} and \begin{equation*}\left(\begin{array}{c} \dot{S}_{\mathrm{SIRS}}(t;x_0,\eta) \\ \dot{I}_{\mathrm{SIRS}}(t;x_0,\eta) \end{array}\right) = f_{\mathrm{SIR}}(x_{\mathrm{SIRS}}(t;x_0,\eta), \nu)+g(x_{\mathrm{SIRS}}(t;x_0,\eta),\mu).\end{equation*}
Then, we are under the conditions of Theorem \ref{KhalilThm}. Let us make an abuse of notation and let $x_{\mathrm{SIR}}(t)=x_{\mathrm{SIR}}(t;x_0,\nu)$ and $x_{\mathrm{SIRS}}(t)=x_{\mathrm{SIRS}}(t;x_0,\eta)$. It is fulfilled that \begin{equation*}||x_{\mathrm{SIR}}(t)-x_{\mathrm{SIRS}}(t)||_2\leq \dfrac{\mu}{L}[\exp(L(t-t_0))-1], \quad \forall\, t\geq t_0.\end{equation*} This is, for any $\varepsilon>0$, $\bar{t}> t_0$, there exists some $\mu>0$ small enough such that \begin{equation}\label{tbar}||x_{\mathrm{SIR}}(t)-x_{\mathrm{SIRS}}(t)||_2\leq\dfrac{\mu}{L}[\exp(L(t-t_0))-1]<\varepsilon, \quad \forall\, t\in[t_0,\bar{t}].\end{equation}

To illustrate this, we can observe in Figure \ref{slowfast} (Left) how both solutions are hardly distinguishable when considering $\beta=2.5$, $\gamma=1$, $\mu=0.001$ and $x_{\mathrm{SIR}}(0)=x_{\mathrm{SIRS}}(0)=(0.9,0.1)^{\mathrm{T}}$. Moreover, the infectious compartment $I_{\mathrm{SIRS}}$ presents a \textit{slow-fast} behavior for the SIRS model when near to the invariant manifold $I_{\mathrm{SIRS}}\equiv 0$; we observe in Figure \ref{slowfast} (Right, Bottom) how it takes a lot of time to move away from the manifold and then approaches it very fast, and hence remains most of the time very close to the solution of $I_{SIR}$ for the SIR model. Hence, it is reasonable that $\mu=0$ is hardly distinguishable from small values of $\mu$ knowing only the infectious compartment. Moreover, Figure \ref{slowfast} illustrates that this difficulty may not only be at initial times, but also for intermediate intervals of time.
\begin{figure}[h!]
    \centering
    \begin{subfigure}{0.48\textwidth}
    \centering
        \includegraphics[width=\textwidth]{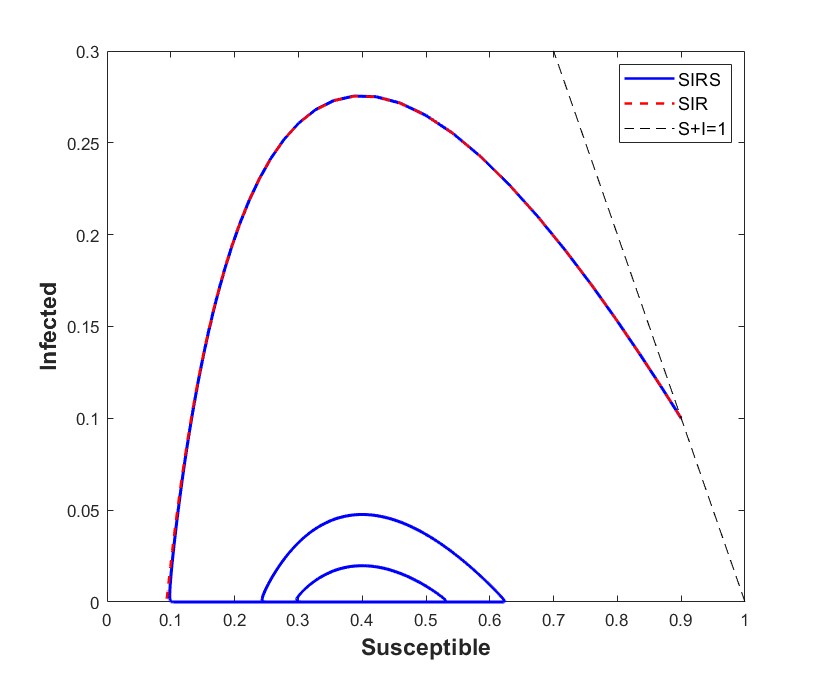}
    \end{subfigure}
    \begin{subfigure}{0.48\textwidth}
    \centering
        \includegraphics[width=\textwidth]{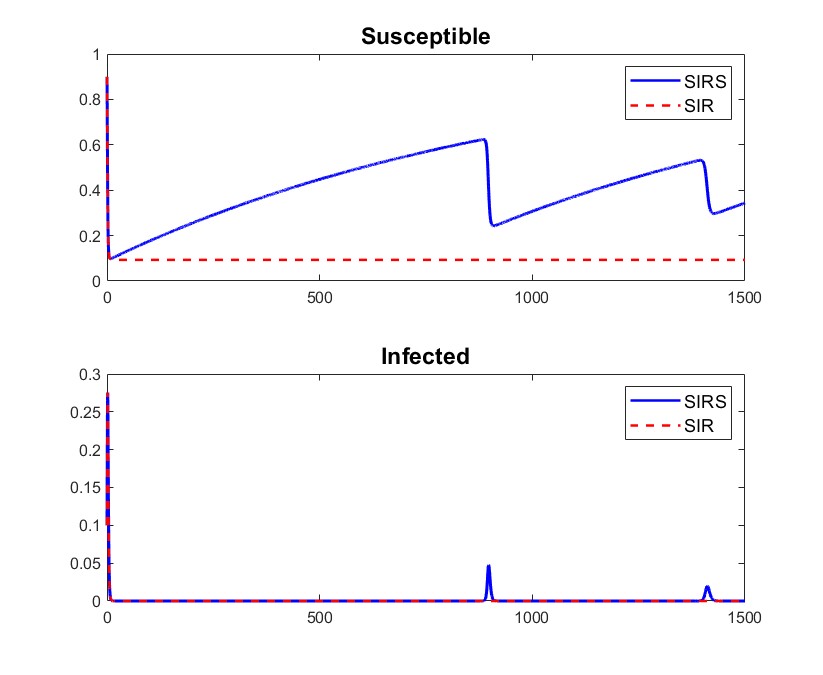}
    \end{subfigure}
    \caption{Comparison between the solutions of an SIR and an SIRS models with same initial condition $(0.9,0.1)$ and parameters $\beta=2.5$ and $\gamma=1$, $\mu=0.001$ for the SIRS model. (Left) Phase plane of both solutions, the SIR solution (dashed red) and the SIRS solution (continuous blue), along with the limiting line $S+I=1$ (dashed black). (Right) Comparison of (Top) the solution for the susceptible compartment $S$ of the SIR model (dashed red) against the one of the SIRS model (continuous blue) and (Bottom) the solution of the infectious compartment $I$ of the SIR model (dashed red) against the one of the SIRS model (continuous blue).}
    \label{slowfast}
\end{figure}

\subsubsection{Distinguishing between SIR and SIRS models}\label{distinguish}

Given the previous study, it is intuitive to think that, in a practical viewpoint, it may be difficult distinguishing an SIR model from an SIRS model with very small $\mu$ if we look at short times. However, it is theoretically possible. We present here two approaches to do this.

\begin{itemize}
\item \textbf{Approach 1:} One can check analogously to the SIRS case that $\{y,y^2,\dot{y}, y\dot{y}\}$ are linearly independent for the SIR case in any $[a,b)\subset\mathcal{I}$ in its positively invariant set $\Tilde{\Omega}=\{(\xi_1,\xi_2)^{\mathrm{T}}\in(0,1)^2 \ : \ \xi_1+\xi_2\leq 1\}$. Then, given some observations $y_{(x_0,\theta_0)}$, we can perform Step 3 of Algorithm \ref{procedure2}: we know there exist different time instants $t_1,t_2,t_3,t_4\in[0,\bar{t}]$, where $\bar{t}$ is the one in \eqref{tbar}, such that there exists a unique solution $\sigma$ fulfilling \begin{equation*}\left(\begin{array}{cccc}
    y(t_1) & y^2(t_1) & \dot{y}(t_1) & y\dot{y}(t_1) \\ \\
    y(t_2) & y^2(t_2) & \dot{y}(t_2) & y\dot{y}(t_2) \\ \\
    y(t_3) & y^2(t_3) & \dot{y}(t_3) & y\dot{y}(t_3) \\ \\
    y(t_4) & y^2(t_4) & \dot{y}(t_4) & y\dot{y}(t_4)
\end{array}\right)\left(\begin{array}{c}
     \sigma_1 \\
     \sigma_2 \\
     \sigma_3 \\
     \sigma_4
\end{array}\right)=\left(\begin{array}{c}
     \dfrac{\dot{y}^2(t_1)}{y(t_1)}-\ddot{y}(t_1) \\
     \dfrac{\dot{y}^2(t_2)}{y(t_2)}-\ddot{y}(t_2)  \\
     \dfrac{\dot{y}^2(t_3)}{y(t_3)}-\ddot{y}(t_3)  \\
     \dfrac{\dot{y}^2(t_4)}{y(t_4)}-\ddot{y}(t_4) 
\end{array}\right).\end{equation*} Given $\sigma$, we obtain the following conclusions:
\begin{itemize}
    \item if $\sigma_1=\sigma_3=0$, we confirm that our observations correspond to an SIR model;
    \item if $\sigma_3\neq 0$, then we are at an SIRS model;
    \item if $\sigma_1\neq 0$ and $\sigma_3=0$, we conclude our observations do not match any of the two models.
\end{itemize}

\item \textbf{Approach 2:} Another way to determine if there is loss of immunity or not consists in considering the previous equality \eqref{ddotSIR}, i.e., \begin{equation*}\ddot{y}-\dfrac{\dot{y}}{y}=-\dfrac{\beta\gamma}{k}y^2-\dfrac{\beta}{k}y\dot{y}, \quad \mbox{which may be rewritten as}\quad \dfrac{\mathrm{d}}{\mathrm{d} t}{\dfrac{\dot{y}}{y}}+\dfrac{\beta\gamma}{k}y+\dfrac{\beta}{k}\dot{y}=0,\end{equation*}i.e., $\left\{\dfrac{\mathrm{d}}{\mathrm{d} t}{\dfrac{\dot{y}}{y}},y,\dot{y}\right\}$ are linearly dependent for the SIR model whenever $I_{\mathrm{SIR}}\not\equiv 0$. However, this is not true for the SIRS model. Indeed, consider System \eqref{SIRSkI}. Let $a_1,a_2,a_3\in\mathbb{R}$ such that \begin{equation*}a_1\dfrac{\mathrm{d}}{\mathrm{d} t}{\dfrac{\dot{y}}{y}}+a_2y+a_3\dot{y}=0.\end{equation*}This is equivalent to \begin{equation*}a_1\beta (-\beta SI+\mu(1-S-I))+a_2kI+a_3kI(\beta S-\gamma)=0,\end{equation*}which again reduces to study the linear independence of $1,S,I,SI$, which we already know that are linearly independent in any $[a,b)\subset\mathcal{I}$ in each set $\Omega_{\theta}$, $\theta\in\Theta$, defined in Section \ref{obsnidentsubsec}. Therefore, given $y$, $\dot{y}$ and $\ddot{y}$, studying the linear dependence of $y,\dot{y},\dfrac{\mathrm{d}}{\mathrm{d} t}\dfrac{\dot{y}}{y}$ may help us determine the model.
\end{itemize}

\section{Some other applications}\label{examples}

In this section, we present a series of additional examples to demonstrate the broader applicability of our approach to other epidemiological models. In each example, we illustrate how joined observability-identifiability can be established using our proposed framework, under different modeling assumptions and data limitations. For clarity, we systematically omit the equation of the Recovered compartment in cases where the total population is constant, as it does not affect the theoretical results. To keep the focus on the theoretical aspects, detailed computational steps are omitted but can be derived straightforwardly following the procedures outlined in previous sections.

\subsection*{The SIR model with demography}

In \cite{Structural}, the authors show that the following model is neither identifiable nor observable: 
\begin{equation*}\left\{\begin{array}{ccl}
    \dot{S} & = & \delta N-\beta\dfrac{SI}{N}-\delta S, \\[0.6em]
    \dot{I} & = & \beta \dfrac{SI}{N}-(\gamma+\delta)I, \\ [0.6em]
    y &=& kI,
\end{array}\right. \quad \forall\, t\geq 0,\end{equation*}
where $\delta>0$ denotes the birth and death rates, assumed equal, $N$ is the total population (constant), and $\beta$, $\gamma$, and $k$ represent the same parameters as in the previous models.

However, this model becomes jointly observable-identifiable when the total population $N$ is known, allowing us to normalize the population so that $S$ and $I$ now represent fractions of the total population:
\begin{equation}\left\{\begin{array}{ccl}\label{dSIRSkI}
    \dot{S} & = & \delta -\beta SI-\delta S, \\
    \dot{I} & = & \beta SI-(\gamma+\delta)I, \\
    y &=& kI,
\end{array}\right. \quad \forall\, t\geq 0,\end{equation}
for some initial condition $(S(0),I(0))^{\mathrm{T}}\in[0,1]^2$.

For this normalized system, we set $\Omega=\{(\xi_1,\xi_2)^{\mathrm{T}}\in[0,1]^2 \, : \, \xi_1+\xi_2\leq 1\}$ and $\Theta:=(0,1]\times (0,+\infty)^3$ for $\theta=(k,\beta,\gamma,\delta)$. Performing computations analogous to those in Section \ref{obsnidentsubsec} during Step 1 and Step 2, we obtain the following relations:
\begin{equation*}y = kI, \quad \dot{y} = (\beta S-(\gamma+\delta))y\quad \implies \quad S = \dfrac{1}{\beta}\left(\dfrac{\dot{y}}{y}+\gamma+\delta\right), \quad I = \dfrac{y}{k},\end{equation*}
assuming $y\not\equiv0$, and
\begin{equation*}\ddot{y}-\dfrac{\dot{y}^2}{y}=y\delta(\beta-\delta-\gamma)-y^2\dfrac{\beta}{k}(\gamma+\delta)-\dot{y}\delta-y\dot{y}\dfrac{\beta}{k}.\end{equation*}

Then, we consider $d_1=1$, $d_1'=2$, $p=1$, $q_1=q=4$, 
\begin{equation*}g_0(y,\dot{y},\ddot{y})=\ddot{y}-\dfrac{\dot{y}^2}{y},\ g_1(y,\dot{y},\ddot{y})=y,\ g_2(y,\dot{y},\ddot{y})=-y^2,\ g_3(y,\dot{y},\ddot{y})=-\dot{y},\ g_4(y,\dot{y},\ddot{y})=-y\dot{y}\end{equation*}
and 
\begin{equation*}r(\theta)=\left(\delta(\beta-\gamma-\delta),\dfrac{\beta}{k}(\gamma+\delta),\delta,\dfrac{\beta}{k}\right).\end{equation*}

With these expressions, we confirm that the assumptions in Step 1 and Step 2 of Algorithm \ref{procedure2} are satisfied, taking parameters in $\Theta$ and initial conditions {consistent with the family $\{\Omega_{\theta}\}_{\theta\in\Theta}$}, with $\Omega_{\theta}:=\{(\xi_1,\xi_2)\in[0,1)\times(0,1]\ : \ \xi_1+\xi_2\leq 1\}\setminus\{P_{\mathrm{e}}(\theta)\}$, for each $\theta\in\Theta$, where $P_{\mathrm{e}}(\theta)$ denotes the equilibrium point associated with $\theta$, and $\Omega_{\theta}$ is positively invariant with respect to the ODE system of \eqref{dSIRSkI} when we consider $\theta$ as parameter vector.

Therefore, we conclude that System \eqref{dSIRSkI} is jointly observable-identifiable on {$\Gamma_{\Theta}$} in any $[a,b)\subset\mathcal{I}$.

\subsection*{The SIRV model}

We now analyze a Susceptible-Infectious-Recovered-Vaccinated (SIRV) model, where individuals gain permanent immunity after recovery, as in the SIR model. Additionally, we assume the existence of a perfect vaccine that provides permanent immunity but is only effective for susceptible individuals. Importantly, this vaccine has no effect on infectious or recovered individuals. However, since we cannot distinguish susceptible individuals from infectious or recovered ones, all compartments are vaccinated indiscriminately. The observable data are the rate of vaccinated individuals (including both effective and ineffective vaccinations). Hence, we consider the following system, where $S$, $I$, and $V$ are fractions of the total population, assumed to be known:
\begin{equation}\label{SIRV}\left\{
    \begin{array}{ccl}
        \dot{S} & = & -\beta SI-\nu S, \\
        \dot{I} & = & \beta SI - \gamma I, \\
        \dot{V} & = & \nu S, \\
        y & = & \nu (1-V),
    \end{array}\right. \quad \forall\, t\geq 0,
\end{equation}
for some initial condition $(S(0),I(0),V(0))^{\mathrm{T}}\in[0,1]^3$. We assume that the initial condition and all parameters $\beta$, $\gamma$, and $\nu$ are unknown, where $\beta$ and $\gamma$ are as defined in previous models, and $\nu>0$ (days$^{-1}$) represents the vaccination rate.

For this system, we define $\Omega=\{(\xi_1,\xi_2,\xi_3)^{\mathrm{T}}\in[0,1]^3 \ : \ \xi_1+\xi_2+\xi_3\leq1\}$ and $\Theta:=(0,+\infty)^3$ for $\theta=(\beta,\gamma,\nu)$. After some computations, we obtain the following relations:
\begin{equation*}
y=\nu(1-V), \quad
\dot{y} = -\nu^2 S, \quad
\ddot{y} = -\dot{y}(\beta I + \nu)\quad
\implies\quad S=-\dfrac{\dot{y}}{\nu^2},\quad I=-\dfrac{1}{\beta}\left(\dfrac{\ddot{y}}{\dot{y}}+\nu\right),\quad V=1-\dfrac{y}{\nu},\end{equation*}
assuming $\dot{y}\not\equiv0$, and
\begin{equation*}y^{(3)}-\dfrac{\ddot{y}^2}{\dot{y}}=-\dot{y}\nu\gamma-\dot{y}^2\dfrac{\beta}{\nu}-\ddot{y}\gamma-\dot{y}\ddot{y}\dfrac{\beta}{\nu^2}.\end{equation*}

Then, we consider $d_1=2$, $d_1'=3$, $p=1$, $q_1=q=4$, 
\begin{equation*}g_0(y,\dot{y},\ddot{y},y^{(3)})=y^{(3)}-\dfrac{\ddot{y}^2}{\dot{y}},\quad g_1(y,\dot{y},\ddot{y},y^{(3)})=-\dot{y},\quad g_2(y,\dot{y},\ddot{y},y^{(3)})=-\dot{y}^2,\end{equation*} 
\begin{equation*}g_3(y,\dot{y},\ddot{y},y^{(3)})=-\ddot{y},\quad g_4(y,\dot{y},\ddot{y},y^{(3)})=-\dot{y}\ddot{y},\end{equation*} 
and 
\begin{equation*}r(\theta)=\left(\nu\gamma,\dfrac{\beta}{\nu},\gamma,\dfrac{\beta}{\nu^2}\right).\end{equation*}

Following the procedures established in earlier sections, we confirm that the assumptions of Step 1 and Step 2 in Algorithm \ref{procedure2} are met, with parameters in $\Theta$ and initial conditions in the revised set $\Omega=\{(\xi_1,\xi_2,\xi_3)\in(0,1]\times(0,1)\times[0,1)\ : \ \xi_1+\xi_2+\xi_3\leq 1\}$, where $\Omega$ does not depend on $\theta\in\Theta$ and is positively invariant under the ODE system defined by \eqref{SIRV}.

Thus, we conclude that System \eqref{SIRV} is jointly observable-identifiable on $\Omega\times\Theta$ in any $[a,b)\subset\mathcal{I}$.

\subsection*{The SIR model (with a different output)}

As discussed in Section \ref{particularSIR}, the SIR model observed through a fraction of infected individuals (i.e., System \eqref{SIRkI}) is observable and identifiable, but not jointly observable-identifiable. Here, we consider the same SIR model but with a different observation: the instantaneous incidence rate. Specifically, we examine the following system:
\begin{equation}\label{SIRbSI}
    \left\{\begin{array}{lcl}
        \dot{S} & = & -\beta SI, \\
        \dot{I} & = & \beta SI-\gamma I, \\
        y & = & \beta SI,
    \end{array}\right.
\end{equation}
for some initial condition $(S(0),I(0))\in\Omega=\{(\xi_1,\xi_2)^{\mathrm{T}}\in[0,1]^2 \ : \ \xi_1+\xi_2\leq 1\}$, which is positively invariant with respect to the system of ODEs given in \eqref{SIRbSI}. In this scenario, we assume that both $\beta$ and $\gamma$ are unknown parameters, defining $\Theta:=(0,\infty)^2$ for $\theta=(\beta,\gamma)$. Following an approach similar to previous cases, we calculate the first derivative of $y$ to express $S$ and $I$ in terms of $y$ and $\dot{y}$:
\begin{equation*}\left\{\begin{array}{lcl}
    y & = & \beta SI, \\
    \dot{y} & = & -y\beta I+y\beta S-\gamma y.
\end{array}\right.\end{equation*}

Notice that this system is nonlinear in $S$ and $I$, making it challenging to directly apply Step 1 of Algorithm \ref{procedure2}. To address this, let us differentiate once more: 
\begin{equation*}\ddot{y}=-\dot{y}\beta I-2y^2\beta+y\beta\gamma I+\dot{y}\beta S-\gamma \dot{y},\end{equation*}
which results in a system that is linear in $S$ and $I$. Therefore, we obtain:
\begin{equation*}\left\{ \begin{array}{lcl}
    \dot{y} & = & (-\beta I+\beta S-\gamma) y, \\
    \ddot{y} & = & (-\beta I+\beta S-\gamma) \dot{y}-2y^2\beta+y\beta\gamma I,
\end{array}\right. \implies \left\{\begin{array}{lcl}
    S & = & \dfrac{1}{\beta}\dfrac{\dot{y}}{y}+\dfrac{1}{\beta\gamma}\left(\dfrac{\ddot{y}}{y}-\dfrac{\dot{y}^2}{y^2}\right)+\dfrac{2}{\gamma}y+\dfrac{\gamma}{\beta}, \\[1em]
    I & = & \dfrac{1}{\beta\gamma}\left(\dfrac{\ddot{y}}{y}-\dfrac{\dot{y}^2}{y^2}\right)+\dfrac{2}{\gamma}y.
\end{array}\right.\end{equation*}

Assuming $y\not\equiv0$, we avoid further differentiation by substituting the expressions of $S$ and $I$ into $y=\beta SI$, yielding the following equation after simplification:
\begin{equation*}-\left(\dfrac{\mathrm{d}}{\mathrm{d} t}\dfrac{\dot{y}}{y}\right)^2=\gamma\dfrac{\dot{y}}{y}\dfrac{\mathrm{d}}{\mathrm{d} t}\dfrac{\dot{y}}{y}+2\beta\gamma\dot{y}+\beta\gamma^2y+4\beta y\dfrac{\mathrm{d}}{\mathrm{d} t}\dfrac{\dot{y}}{y}+4\beta^2y^2+\gamma^2\dfrac{\mathrm{d}}{\mathrm{d} t}\dfrac{\dot{y}}{y}.\end{equation*}

Then, we set $d_1=d_1'=2$, $p=1$, $q_1=q=6$, 
\begin{equation*}g_0(y,\dot{y},\ddot{y})=-\left(\dfrac{\mathrm{d}}{\mathrm{d} t}\dfrac{\dot{y}}{y}\right)^2, \quad g_1(y,\dot{y},\ddot{y})=\dfrac{\dot{y}}{y}\dfrac{\mathrm{d}}{\mathrm{d} t}\dfrac{\dot{y}}{y}, \quad g_2(y,\dot{y},\ddot{y})=\dot{y}, \quad g_3(y,\dot{y},\ddot{y})=y,\end{equation*} 
\begin{equation*}g_4(y,\dot{y},\ddot{y})=y\dfrac{\mathrm{d}}{\mathrm{d} t}\dfrac{\dot{y}}{y}, \quad g_5(y,\dot{y},\ddot{y})=y^2, \quad g_6(y,\dot{y},\ddot{y})=\dfrac{\mathrm{d}}{\mathrm{d} t}\dfrac{\dot{y}}{y},\end{equation*} 
and 
\begin{equation*}r(\theta)=\left(\gamma,2\beta\gamma,\beta\gamma^2,4\beta,4\beta^2,\gamma^2\right).\end{equation*}

We confirm that the assumptions of Step 1 and Step 2 in Algorithm \ref{procedure2} hold, with parameters in $\Theta$ and initial conditions in $\Omega=\{(\xi_1,\xi_2)\in(0,1)^2 \ : \ \xi_1+\xi_2\leq 1\}$, where $\Omega$ does not depend on $\theta\in\Theta$ and is positively invariant with respect to the system of ODEs of System \eqref{SIRbSI}.

Consequently, we conclude that System \eqref{SIRbSI} is jointly observable-identifiable on $\Omega\times\Theta$ in any $[a,b)\subset\mathcal{I}$.

\subsection*{The SIV model with demography and two outputs}

Here, we consider a Susceptible-Infectious-Vaccinated (SIV) model with demographic dynamics. We assume a perfect vaccine that is effective only for susceptible individuals, though infectious individuals are also vaccinated indiscriminately, as in System \eqref{SIRV}. In this case, however, we model a permanent infection, which can serve as a simplified representation of long-term infections, such as those associated with certain sexually transmitted diseases (e.g., \textit{human papillomavirus} (HPV)). We assume two observable quantities: the rate of all vaccinated individuals (both effective and ineffective) and the rate of natural deaths. These deaths are attributed to natural population dynamics and are assumed to be routinely monitored by authorities. Hence, we consider the following system, where $S$, $I$, and $V$ represent the fraction of susceptible, infectious, and vaccinated individuals, respectively:
\begin{equation}\label{SIVd}
    \left\{\begin{array}{lcl}
        \dot{S} & = & A-\beta SI - (\nu+\delta)S, \\
        \dot{I} & = & \beta SI-\delta I, \\
        \dot{V} & = & \nu S-\delta V, \\
        y & = & \left(\begin{array}{c}
             y_1 \\
             y_2
        \end{array}\right) = \left(\begin{array}{c}
             \nu(S+I) \\
             \delta(S+I+V)
        \end{array}\right),
    \end{array}\right. \quad \forall\, t\geq 0,
\end{equation}
for some initial condition $(S(0),I(0),V(0))\in[0,\infty)^3$. All the initial condition and the parameters $A$, $\beta$, $\delta$, and $\nu$ are assumed to be unknown. Here, $A>0$ represents a constant recruitment rate, $\delta>0$ is the death rate, and $\beta$ and $\nu$ have the same definitions as in System \eqref{SIRV}.

The natural positively invariant set for this system is $\Omega=\{(\xi_1,\xi_2,\xi_3)^{\mathrm{T}}\in\left[0,A/\delta\right]^3 \ : \ \xi_1+\xi_2+\xi_3\leq A/\delta\}$, and $\Theta := (0,\infty)^4$, for $\theta=(A,\beta,\delta,\nu)$.

We first observe that this system is not observable (and hence not jointly observable-identifiable) if only $y_1$ is used as the observation, similar to System \eqref{SIRV}. Since $y_1$ depends solely on $S$ and $I$, we cannot uniquely determine $V_0$. Therefore, an additional observation is required. By including the rate of natural deaths, $y_2=\delta(S+I+V)$, we leverage data that should be routinely available, making it reasonable to assume accessibility to this information.

Similarly, relying on $y_2$ alone is insufficient, as $\dot{y}_2=A\delta-\delta y_2$ yields no further parameter information through differentiation, unlike previous models.

After some computations, differentiating $y_1$ twice and $y_2$ once, we obtain the following expressions:
\begin{equation*}\left\{\begin{array}{lcl}
    y_1 &=& \nu(S+I), \\
    \dot{y}_1 &=& \nu A-\nu^2 S-\delta y_1, \\
    y_2 &=& \delta(S+I+V)
\end{array}\right.\implies
\left\{\begin{array}{lcl}
    S &=& \dfrac{A}{\nu}-\dfrac{\delta}{\nu^2}y_1-\dfrac{\dot{y}_1}{\nu^2}, \\[1em]
    I &=& \left(\dfrac{\nu+\delta}{\nu^2}\right)y_1-\dfrac{A}{\nu}+\dfrac{\dot{y}_1}{\nu^2}, \\[1em]
    V &=& \dfrac{y_2}{\delta}-\dfrac{y_1}{\nu},
\end{array}\right.\end{equation*}
and 
\begin{equation*}\left\{\begin{array}{lcl}
    \dot{y}_1^2 &=& A\nu^2\dfrac{\delta\nu-A\beta}{\beta}+\nu\left(A\nu+2A\delta-\dfrac{\delta\nu^2+\delta^2\nu}{\beta}\right)y_1+\nu\left(2A-\dfrac{\nu^2+2\delta\nu}{\beta}\right)\dot{y}_1 \\
    &&-\delta(\nu+\delta)y_1^2-(\nu+2\delta)y_1\dot{y}_1-\dfrac{\nu^2}{\beta}\ddot{y}_1, \\
    \dot{y}_2 &=& A\delta-\delta y_2.
\end{array}\right.\end{equation*}

Focusing on the equation $\dot{y}_2=A\delta-\delta y_2$, we observe that $\{1, y_2\}$, when $y_2$ is non-constant, allows us to determine $A$ and $\delta$ by solving the system:
\begin{equation*}\begin{pmatrix}
    1 & y_2(t_1) \\
    1 & y_2(t_2)
\end{pmatrix}\begin{pmatrix}
    \sigma_1 \\ \sigma_2
\end{pmatrix}=\begin{pmatrix}
    \dot{y}_2(t_1) \\
    \dot{y}_2(t_2)
\end{pmatrix},\end{equation*}
such that $\delta = -\sigma_2$ and $A=-\sigma_1/\sigma_2$. Substituting, we can rewrite:
\begin{equation*}\left\{\begin{array}{rcl}
    \dot{y}_1^2+\sigma_2^2y_1^2-2y_1\dot{y}_1\sigma_2 &=& A\nu^2\dfrac{\delta\nu-A\beta}{\beta}+\nu\left(A\nu+2A\delta-\dfrac{\delta\nu^2+\delta^2\nu}{\beta}\right)y_1 \\[1em]
    &&+\nu\left(2A-\dfrac{\nu^2+2\delta\nu}{\beta}\right)\dot{y}_1-\nu y_1\left(-\sigma_2y_1+\dot{y}_1\right)-\dfrac{\nu^2}{\beta}\ddot{y}_1, \\
    \dot{y}_2 &=& A\delta-\delta y_2.
\end{array}\right.\end{equation*}

This reformulation ensures the linear independence condition in Step 2 of Algorithm \ref{procedure2}. Then, we consider $d_1=1$, $d_2=0$, $d_1'=2$, $d_2'=1$, $p=2$, $q_1=5$, $q_2=2$,
\begin{eqnarray*}g_{1,0}(y_1,\dot{y}_1,\ddot{y}_1,y_2,\dot{y}_2)&=&\dot{y}_1^2+\sigma_2y_1^2-2y_1\dot{y}_1\sigma_2, \quad g_{1,1}(y_1,\dot{y}_1,\ddot{y}_1,y_2,\dot{y}_2)=1, \\ g_{1,2}(y_1,\dot{y}_1,\ddot{y}_1,y_2,\dot{y}_2)&=&y_1, \quad g_{1,3}(y_1,\dot{y}_1,\ddot{y}_1,y_2,\dot{y}_2)=\dot{y}_1, \\
g_{1,4}(y_1,\dot{y}_1,\ddot{y}_1,y_2,\dot{y}_2)&=&-y_1\left(-\sigma_2y_1+\dot{y}_1\right), \quad g_{1,5}(y_1,\dot{y}_1,\ddot{y}_1,y_2,\dot{y}_2)=-\ddot{y}_1, \\
g_{2,0}(y_1,\dot{y}_1,\ddot{y}_1,y_2,\dot{y}_2)&=&\dot{y}_2,\quad g_{2,1}(y_1,\dot{y}_1,\ddot{y}_1,y_2,\dot{y}_2)=1,\quad g_{2,2}(y_1,\dot{y}_1,\ddot{y}_1,y_2,\dot{y}_2)=-y_2,\end{eqnarray*}and 
\begin{equation*}r(\theta_0)=\left(A\nu^2\dfrac{\delta\nu-A\beta}{\beta},\nu\left(A\nu+2A\delta-\dfrac{\delta\nu^2+\delta^2\nu}{\beta}\right),\nu\left(2A-\dfrac{\nu^2+2\delta\nu}{\beta}\right),\nu,\dfrac{\nu^2}{\beta},A\delta,\delta\right).\end{equation*}

Given this, the assumptions in Step 1 and Step 2 of Algorithm \ref{procedure2} hold for parameters in $\Theta$ and initial conditions {consistent with the family $\{\Omega_{\theta}\}_{\theta\in\Theta}$}, such that, for $\theta\in\Theta$, $\Omega_{\theta}:=\{(\xi_1,\xi_2,\xi_3)\in[0,A/\delta)\times(0,A/\delta)\times[0,A/\delta)\ : \ \xi_1+\xi_2+\xi_3< A/\delta\}\setminus\{P_{\mathrm{e}}(\theta)\}$, where $P_{\mathrm{e}}(\theta)$ is the equilibrium point of the system and $\Omega_{\theta}$ is positively invariant under the ODEs of System \eqref{SIVd} when considering $\theta$ as the parameter vector.

Therefore, we conclude that System \eqref{SIVd} is jointly observable-identifiable on {$\Gamma_{\Theta}$} in any $[a,b)\subset\mathcal{I}$.

\section{Numerical illustration}\label{numerical}

In this section, we present numerical experiments designed to illustrate the application of the theoretical framework developed in Sections \ref{gen_model} and \ref{obsnident} for observability, identifiability and joined observability-identifiability in epidemiological models. This numerical analysis also aims to implement the theoretical methods in some cases and demonstrate their practical feasibility under varying conditions. Specifically, we analyze two examples: one based on an SIRS model and another on an SIR model, each chosen to exhibit distinct epidemiological dynamics:
\begin{itemize}
    \item \textbf{Case 1:} We consider an SIRS model in which the endemic equilibrium is globally asymptotically stable (except for the invariant manifold $I\equiv 0$, as it is stable for the disease-free equilibrium). The solution oscillates when converging to the endemic equilibrium, as in the example presented in Figure \ref{slowfast}. We set the following parameters and initial conditions: $k_0=0.3,\ \beta_0=0.25, \ \gamma_0 =0.1, \ \mu_0=0.05, \ S_0=0.9,$ and $I_0=0.1$. Thus, the basic reproduction number is $\mathcal{R}_0=2.5$.
    \item \textbf{Case 2:} We consider an SIR model that exhibits an epidemic peak. The chosen parameters and initial conditions are $k_0=0.3, \ \beta_0=0.25, \ \gamma_0 =0.1, \ S_0=0.9,$ and $I_0=0.1$. Here as well, $\mathcal{R}_0=2.5$.
\end{itemize}

To simulate these cases, we approximate each solution using a fourth-order Runge-Kutta algorithm and synthetically generate observations, $y=k_0I$, at each time-step. {In this initial study, we assume noise-free data to establish a clear baseline for comparison.} We then analyze two scenarios:
\begin{itemize}
    \item \textbf{Scenario 1:} We assume that continuous observations of $y$ are available over the interval $[0,T_{\max}]$, with exact knowledge or computability of its derivatives (enabled by the analyticity of the system). We apply the recovery procedure outlined in Algorithm \ref{procedure3} to determine the original parameters and initial condition, utilizing the full interval $[0,T_{\max}]$.
    \item \textbf{Scenario 2:} In realistic situations, data are typically discrete, often recorded at daily intervals by public health authorities (e.g., \cite{WHOdailyEbola} and \cite{WHOdailyCOVID}). To mimic this, we extract simulated data once per day (at the same time each day), referred to here as \textit{daily data}, and apply the Ordinary Least Squares method to estimate the unknown parameters.
\end{itemize}

{Scenario 1 provides a controlled environment for testing recovery procedures and Scenario 2 reflects more realistic, discrete data conditions.} For practical purposes, we will examine Case 1 under Scenario 1 and Case 2 under Scenario 2. Preliminary results indicated similar outcomes when interchanging cases between scenarios, allowing us to streamline the analysis. In Scenario 1, we focus on the procedure in Algorithm \ref{procedure3}, omitting that of Algorithm \ref{procedure2} due to the former's superior performance.

To integrate numerically the ODE systems, we utilize a fourth-order Runge-Kutta method with a time-step of $h=2^{-5}$ days, extending up to a maximum simulation time of $T_{\max}=5$ days. {Although the maximum time may seem low, it was sufficient to capture the dynamics required for convergence in Scenario 2.} Furthermore, we employ the extended SIRS model from Section \ref{particularSIR}, assuming no prior knowledge regarding whether $\mu_0 \neq 0$ or $\mu_0 = 0$.

\subsection{Scenario 1: Linear systems for continuous observations}

In this section, we perform numerical tests for Case 1, assuming continuous observation of $y=k_0I$ over $[0,T_{\max}]$, i.e., we observe $y$ at every time point within this interval, and assume that we know or can compute its exact successive derivatives.

To implement the numerical tests, we first need to construct the necessary data. From an implementation perspective, we obtain the values of $y$ and its derivatives at each time-step of the numerical scheme, specifically at each point in the time vector $t_{\mathrm{vec}}=(t_i)_{i\in\{0,\dots,N\}}$, where $t_i=ih$ and $N=T_{\max}/h\in\mathbb{N}$. The data generation steps are as follows:
\begin{itemize}
    \item[1.] We approximate the solutions $S$ and $I$ of the SIR or SIRS system with a small time-step $h$ using the fourth-order Runge-Kutta scheme, and define our observations as $y=k_0I$.
    \item[2.] We approximate the first derivative of $y$ by $\dot{y}=k_0\dot{I}=k_0I(\beta_0 S-\gamma_0)$.
    \item[3.] For higher-order derivatives, we use the following linear equation in terms of parameters $\sigma_i$, $i\in\{1,2,3,4\}$:
    \begin{equation*}
    \ddot{y}=\dfrac{\dot{y}^2}{y}-\sigma_1y-\sigma_2y^2-\sigma_3\dot{y}-\sigma_4y\dot{y},
    \end{equation*}
    where $\sigma_1=\mu_0(\gamma_0-\beta_0)$, $\sigma_2=\beta_0(\gamma_0+\mu_0)/k_0$, $\sigma_3=\mu_0$, and $\sigma_4=\beta_0/k_0$. Using this relationship, we iteratively approximate higher-order derivatives of $y$ through lower-order derivatives.
\end{itemize}

With these generated data, we apply the method in Algorithm \ref{procedure3} to estimate the system parameters and initial condition. This method requires selecting a time $\tilde{t}\in t_{\mathrm{vec}}$ such that the following system has a unique solution (see \eqref{step5proc3}-\eqref{step6proc3}):
\begin{equation}\label{methodb}
\left(\begin{array}{cccc}
    \dfrac{\mathrm{d}^k y}{\mathrm{d} t^k} & \dfrac{\mathrm{d}^k y^2}{\mathrm{d} t^k} & \dfrac{\mathrm{d}^k \dot{y}}{\mathrm{d} t^k} & \dfrac{\mathrm{d}^k y\dot{y}}{\mathrm{d} t^k}
\end{array}\right)_{\begin{subarray}{l} t=\tilde{t} \\ k=0,1,2,3\end{subarray}}\sigma^{\mathrm{T}} =\left(\begin{array}{c}
    \dfrac{\mathrm{d}^k}{\mathrm{d} t^k}\left(\dfrac{\dot{y}^2}{y}-\ddot{y}\right)
\end{array}\right)_{\begin{subarray}{l} t=\tilde{t} \\ k=0,1,2,3\end{subarray}}.
\end{equation}

Instead of directly estimating $(k_0,\beta_0,\gamma_0,\mu_0)$, we focus on estimating
\begin{equation*}
\sigma=(\sigma_1,\sigma_2,\sigma_3,\sigma_4)=\left(\mu_0(\gamma_0-\beta_0),\dfrac{\beta_0}{k_0}(\gamma_0+\mu_0),\mu_0,\dfrac{\beta_0}{k_0}\right).
\end{equation*}
The initial conditions $S_0$ and $I_0$ are then computed as
\begin{equation*}
S_0=\dfrac{1}{\beta_0}\left(\dfrac{\dot{y}(0)}{y(0)}+\gamma_0\right) \quad \text{and} \quad I_0=\dfrac{y(0)}{k_0}.
\end{equation*}
Knowing the observation at time 0 avoids the computational challenges of backward integration.

Assuming known values of $y(0)=0.03$ and $\dot{y}(0)=0.00375$, we aim to calibrate
\begin{equation*}
(\sigma_1,\sigma_2,\sigma_3,\sigma_4)=(-0.0075,\ 0.125,\ 0.05,\ 0.8\overset{\_}{3}) \quad \text{and}\quad (S_0,I_0)=(0.9,0.1).
\end{equation*}

To evaluate the method's performance, we conducted 161 experiments, corresponding to $N+1=T_{\max}/h+1=161$. For each time point in $t_{\mathrm{vec}}$, we examined:
\begin{itemize}
    \item The relative error between each computed component of $\sigma$ and its exact value. {These errors were consistently small, with a maximum of the order of $10^{-13}$, confirming the high accuracy of the method.}
    \item The determinant of the matrix in \eqref{methodb}, which should be non-zero. The values obtained were of the order of $10^{-17}$.
    \item The condition number of the matrix in \eqref{methodb} (based on the $L^2$-norm). All the obtained values range between $10^3$ and $5\times 10^5$.
    \item The computational time (in seconds). Performing all tests took approximately $10^{-2}$ seconds.
\end{itemize}

These results indicate that the method provides accurate parameter estimates despite the small determinant values and the matrix conditioning. {The computational efficiency demonstrated, completing 161 tests in minimal time, makes this approach highly suitable for real-time applications.}

Similar results were observed for Case 2, which are omitted here for brevity.

\subsection{Scenario 2: OLS method for daily observations}\label{OLS}

In this section, we analyze Case 2 under the assumption of daily, noise-free observations from the deterministic model. {Daily observations reflect realistic data collection practices in epidemiology, such as those by public health
authorities.} Given the discrete nature of the data, calculating derivatives directly can result in inaccuracy (if differentiated numerically) or bias (if interpolated prior to differentiation). In this context, we employ the Ordinary Least Squares (OLS) method to estimate the values of $k_0$, $\beta_0$, $\gamma_0$, $\mu_0$, and $S_0$. Let $\theta=(\theta_1,\theta_2,\theta_3,
\theta_4,\theta_5)$ represent the parameter vector, where we calibrate 4 parameters ($k_0$, $\beta_0$, $\gamma_0$, and $\mu_0$) and 1 initial condition ($S_0$). The goal is to find $\theta_{\mathrm{OLS}}=(k_{\mathrm{OLS}},\beta_{\mathrm{OLS}},\gamma_{\mathrm{OLS}},\mu_{\mathrm{OLS}},S_{0,\mathrm{OLS}})$ by solving 
\begin{equation*} 
\theta_{\mathrm{OLS}}=\underset{\theta\in{\Theta}}{\mathrm{argmin}}\sum_{i=0}^{T_{\max}} A\left(\theta_1 I(t_i;\theta)-y(t_i)\right)^2, \quad t_i=i,
\end{equation*} 
where $I(t;\theta)$ is the solution to the following system of ODEs:
\begin{equation*}
\left\{\begin{array}{lcl}
    \dot{S}(t;\theta) & = & -\theta_2 S(t;\theta)I(t;\theta)+\theta_4(1-S(t;\theta)-I(t;\theta)), \\[.6em]
    \dot{I}(t;\theta) & = & \theta_2 S(t;\theta)I(t;\theta)-\theta_3 I(t;\theta),
\end{array}\right.
\end{equation*}
with $(S(0;\theta),I(0;\theta))=\left(\theta_5,\dfrac{y(0)}{\theta_1}\right)$. Here, we define a new feasible set $\Theta^*=\Theta\times[0,1)$. To {enhance sensitivity to small deviations, a scaling factor $A=10^{14}$ is introduced in the objective function.}

The choice of $T_{\max}=5$ days (using daily data) was based on preliminary testing, where this time-frame was found sufficient for reliable parameter estimation despite its brevity.

For this experiment, we use the \texttt{MATLAB} function \texttt{lsqcurvefit} to solve the OLS problem, adjusting settings for increased accuracy (see \cite{lsqcurvefit}). Specifically, we set the maximum evaluations to $10^6$, iterations to $5 \times 10^5$, step tolerance to $10^{-15}$, and function tolerance to $10^{-17}$.

We impose the following bounds for the parameters, with subscripts $m$ and $M$ indicating the minimum and maximum values, respectively: $[k_m, k_M]=[\min_t\{y\},1]$, $[\beta_m,\beta_M]=[10^{-2},3]$, $[\gamma_m, \gamma_M]=[10^{-2},1]$, $[\mu_m,\mu_M]=[0,1]$, and $[S_m,S_M]=[0,1-10^{-10}]$. {The bounds for $\beta$ are informed by studies on $\mathcal{R}_0$ estimates for diseases such as smallpox, pertussis, or COVID-19} (e.g., \cite{R0smallpox}, \cite{R0pertussis}, \cite{R0delta}, \cite{R0omicron}, \cite{R0ebola}, \cite{WHOsars}).

We initialize the OLS algorithm several times with random initial conditions, denoted as I.C., drawn from a uniform distribution $\mathcal{U}(0,1)$. Each initial condition is generated as:
\begin{equation*}
\text{I.C.}=(k_m,\beta_m,\gamma_m,\mu_m,S_m)^{\mathrm{T}}+(\rho_1 (k_M-k_m), \rho_2(\beta_M-\beta_m), \rho_3(\gamma_M-\gamma_m), \rho_4(\mu_M-\mu_m), \rho_5(S_M-s_m))^{\mathrm{T}},
\end{equation*}
where $\rho_i\sim \mathcal{U}(0,1)$. For conciseness, we report results from 5 representative tests. Table \ref{0OLSv1} includes the following outcomes: Test (test number), I.C. (initial condition for \texttt{lsqcurvefit}), Abs. error $\theta_0$ (absolute error between the computed solution and the true parameter vector $\theta_0$), Obj. value (final objective function value), and Time (s) (computational time for each test in seconds).

\begin{table}[h!]
\small
\begin{center}
\begin{tabular}{ c|c|c|c|c }
 Test & I.C. & Abs. error $\theta_0$ & Obj. value & Time (s) \\
 \hline
 1 & (0.545    1.967    0.414    0.82    0.718) & (0.019 0.015 6.8e-6 4.7e-6 0.052) & 6.324e-12 & 116.517 \\
 \hdashline
 2 & (0.97    1.599    0.332    0.106   0.611) & (0.226 0.189 8.9e-12 9.3e-13 0.387) & 7.657e-19 & 1.904 \\
 \hdashline
 3 & (0.785    1.276    0.1    0.266    0.154) & (0.25 0.208 4e-11 3.6e-12 0.409) & 9.036e-18 & 1.546 \\
 \hdashline
 4 & (0.686    0.874    0.675    0.695    0.068) & (0.7 0.286 0.270 0.096 0.024) & 0.003 & 621.808 \\
 \hdashline
 5 & (0.277    0.68    0.671    0.844    0.344) & (0.7 0.287 0.271 0.097 0.024) & 0.006 & 1380.792
\end{tabular}
    \caption{Results for Scenario 2 applied to Case 2 using the OLS method.\label{0OLSv1}}
    \vspace{1em}
\begin{tabular}{ c|c|c }
Test & Approx. of $\theta_0$ & Approx. of $\tilde{\theta}_0$ \\
\hline
 1 & (0.319  0.265   0.1    4.7e-6    0.848) & (0.1   0.833   0.225) \\
 \hdashline
 2 & (0.526   0.439   0.1   9.3e-13   0.513) & (0.1    0.833   0.225) \\
 \hdashline
 3 & (0.55  0.458   0.1 3.6e-12 0.491) & (0.1   0.833   0.225) \\
 \hdashline
 4 & (1  0.536    0.370   0.096    0.924) & (0.370  0.536    0.495) \\
 \hdashline
 5 & (1  0.537   0.371   0.097 0.924) & (0.371   0.537   0.496)
\end{tabular}
    \caption{Results for Scenario 2 applied to Case 2 using the OLS method: Approximations of the parameters.\label{0OLSv1params}}
\end{center}
\end{table}

The target parameter vector is
\begin{equation*}
\theta_0=(k_0,\beta_0,\gamma_0,\mu_0,S_0)=(0.3, \ 0.25, \ 0.1, \ 0, \ 0.9),
\end{equation*}
but as noted in Section \ref{particularSIR}, we can only determine specific combinations: $\gamma_0$, $\beta_0/k_0$, $\beta S_0$, $kI_0$. Defining $\tilde{\theta}_0=(\gamma_0,\beta_0/k_0,\beta_0 S_0)$,
\begin{equation*}
\tilde{\theta}_0=(\gamma_0,\beta_0/k_0,\beta_0 S_0)=(0.1,0.8\overset{\_}{3},0.225).
\end{equation*}
Table \ref{0OLSv1params} compares approximations for $\theta_0$ and $\tilde{\theta}_0$, showing accurate convergence for $\gamma_0$ and $\mu_0$ in Tests 1, 2 and 3. Despite inaccuracies in $k_0$, $\beta_0$, and $S_0$ in these tests, the objective function values are low, indicating that the algorithm converged successfully for the key identifiable combinations. Most of the tests not presented here converged similarly.

In Tests 4 and 5, the parameters reach a similar solution, representing an SIRS model with 
\begin{equation*}
k\approx 1, \, \beta \approx 0.54, \, \gamma \approx 0.37, \, \mu\approx 0.1, \, \text{and} \, S(0)\approx 0.924.
\end{equation*}
{These results further illustrate the method’s capacity to handle challenging scenarios where early-stage data provide limited discriminatory power}. The amplified error factor of $10^{14}$ implies that the non-amplified error in these cases is of the order of $10^{-17}$, illustrating the difficulty in distinguishing between SIR and SIRS models in early stages when only a portion of infected individuals is observed, as noted in Section \ref{compSIRS}. Several tests not presented here reached similar solutions. The rest of the tests showed different results.

For Case 1, similar results were observed, with certain tests {producing parameter sets indicative of an SIR model (i.e., with $\mu\approx 0$). Nonetheless, these tests consistently preserved the identifiable combinations $\gamma$, $\beta/k$ and $\beta S_0$, resulting in undistinguishable SIR models.} These results are not presented here for brevity.

\section{Conclusions}

This work addresses the problems of observability, identifiability and joined identifiability-observability in a wide class of systems of ODEs that covers classical epidemiological models, considering idealized continuous, noise-free observations. This problem was previously proposed in \cite{Kubik}, where the basis of the approach was developed in a general context. In Section \ref{gen_model}, we extended the theoretical framework in \cite{Kubik} considering parameter-dependent sets of initial conditions, and presented Algorithms \ref{procedure1}, \ref{procedure2} and \ref{procedure3}, offering different constructive methods to compute unknown parameters and/or initial conditions based on this framework. These methods rely on solving a series of linear systems at well chosen time instants, providing a practical approach to recovering model parameters. Additionally, if observations are available at time 0, the initial condition can be determined directly; otherwise, it can be inferred by integrating backwards using the estimated parameters. Furthermore, our approach can be extended to models with piecewise constant parameters by allowing parameter updates at predetermined time instants.

One of the primary ideas underlying our theoretic framework is the establishment of the linear independence of some sets of functions, an approach that has not been much addressed, and the known results are framed in more restricted settings (e.g., rational models, or under the knowledge of data at initial time). This technique, although challenging depending on the sets of functions, has proved to be essential for deriving the joined observability-identifiability results. Moreover, this method reduces the dependency on high-order derivatives, offering simpler computations and more robust algorithms against noise compared to classical techniques such as the Hermann-Krener matrix rank condition.

In Section \ref{obsnident}, we applied this framework to an SIRS model (with $\mu>0$) when only a fraction of infected individuals is observed, under the assumption of continuous, noise-free observations, and demonstrated its theoretical joined observability-identifiability. Specifically, we showed that observing only a fraction of the infected population is sufficient to determine all model parameters ($k$, $\beta$, $\gamma$ and $\mu$) and the initial condition of both susceptible and infected individuals. Furthermore, it is demonstrated that considering parameter-dependent sets of initial conditions is crucial in some cases.

For the SIR model (which can be regarded as a limit case of the SIRS model with $\mu=0$), however, we found that it is observable and identifiable, but not jointly observable-identifiable. Specifically, if both the initial condition and the parameters are unknown, we can only determine $\gamma$, $\beta/k$, $\beta S_0$ and $kI_0$. This highlights the critical role of the SIRS model’s cyclic dynamics (loss of immunity) in achieving joined observability-identifiability. Based on these findings, we proposed methods to distinguish data from SIR and SIRS models theoretically.

In Section \ref{examples}, we presented additional epidemiological examples, using one and two observations, for which our framework can also be applied. These examples illustrate the adaptability of the proposed methodologies across different model structures and observational scenarios.

Finally, in Section \ref{numerical} we demonstrated the applicability of our theoretical results through numerical experiments, using an SIRS model and an SIR model, each under two different scenarios:
\begin{itemize}
    \item \textbf{Scenario 1:} The observations are \textit{continuous}, for which we considered the numerical solutions at each time-step and assumed perfect knowledge of the derivatives of the observations. We solved this scenario through the procedure of Algorithm \ref{procedure3}. All tests approximated successfully the exact solution of the SIRS case.
    \item \textbf{Scenario 2:} The observations in this scenario were considered to be daily reported (i.e., discrete), and we used Ordinary Least Squares for the calibration of the parameters and presented the results of 5 representative tests for the SIR case. {The results indicated that the method reliably captured most of the times the key identifiable combinations ($\gamma$, $\beta/k$, $\beta S(0)$) and $\mu$ approximately 0, as expected from the theory exposed in Section \ref{particularSIR}. However, two tests produced approximately the same parameter vectors corresponding to an SIRS model. This result illustrates the difficulty of distinguishing between the two models during early epidemic stages when only a portion of infected individuals is observed, as discussed in Section \ref{compSIRS}.}
\end{itemize}

{This study demonstrates the feasibility of the proposed methods for parameter estimation and model discrimination in epidemiological systems.} Future work should explore extensions to scenarios with more realistic features, such as noisy data (see, e.g., \cite{IdentEbola}) or bounded measurement ranges (e.g., related to \textit{interval observers}, see \cite{RapaportDochain}).

\section*{Acknowledgments}

This work was carried out with financial support from the projects PID2019-106337GB-I00 and PID2023-146754NB-I00, funded by MCIU/AEI/10.13039/501100011033 and FEDER, EU, the project PCI2024-153478, funded by the European M-ERA.Net program, and the French National Research Agency through the ANR project NOCIME (New Observation and Control Issues Motivated by Epidemiology) for the period 2024-2027. In addition, author A.B. Kubik was supported by an FPU predoctoral fellowship and a mobility grant, both provided by the Ministry of Science, Innovation and Universities of the Spanish Government.

%\printbibliography

\bibliographystyle{abbrv}
\bibliography{mybibfile.bib}

\end{document}